\documentclass{amsart}[11pt]
\usepackage{graphicx,url,amsthm,xspace,enumitem,subcaption}
\usepackage[dvipsnames]{xcolor}
\usepackage{amscd}
\usepackage{multirow}
\usepackage[utf8]{inputenc}
\usepackage{tikz}
\usetikzlibrary{braids}
\graphicspath{{images/}}

\usepackage{amssymb}
\usepackage{graphics}
\usepackage{graphicx}
\usepackage{amscd}
\usepackage{color}
\usepackage{amsmath, amsthm, epsfig,  psfrag}
\usepackage{graphicx,url,amsthm,xspace,enumitem}
\usepackage{pinlabel}
\usepackage{hyperref}
\usepackage{subcaption}
\usepackage[margin=3cm, marginpar=2.5cm]{geometry}

\usepackage[all,pdftex,arc,curve,color,frame]{xy}
\usepackage{xy}
\xyoption{all}
\usepackage[normalem]{ulem}

\usepackage{pinlabel}

\usepackage{import}

\newcommand{\C}{\mathbb{C}}
\newcommand{\R}{\mathbb{R}}

\newcommand{\Z}{\mathbb{Z}}

\newcommand{\cptwobar}{\overline{\C\textup{P}}\,\!^2}

\newcommand{\ti}{\tilde}
\renewcommand{\d}{\partial}
\renewcommand{\div}{\operatorname{div}}

\newtheorem{theorem}{Theorem}[section]
\newtheorem{lemma}[theorem]{Lemma}
\newtheorem{prop}[theorem]{Proposition}
\newtheorem{cor}[theorem]{Corollary}

\theoremstyle{definition}
\newtheorem{definition}[theorem]{Definition}
\newtheorem{remark}[theorem]{Remark}
\newtheorem{example}[theorem]{Example}

\numberwithin{equation}{section}

\title{Planar multilinks and rational singularities}

\author{M\'arton Beke}
\address{Alfréd Rényi Institute of Mathematics, Budapest\\ University of Technology and Economics, Budapest, Hungary}
\email{bekem@renyi.hu}

\author{Olga Plamenevskaya}
\address{Department of Mathematics, Stony Brook University, Stony Brook, NY,
11794,  U.S.A.}
\email{olga@math.stonybrook.edu}

\begin{document}

\begin{abstract}
Fibered multilinks are a generalization of classical fibered knots and open books
that arise in the study of surface singularities and Milnor fibrations. We prove that if 
the canonical contact structure on the link of a surface singularity is supported by a planar multilink open book, then the singularity must be rational, and that sandwiched singularities are characterized by admitting planar multilinks with a component of multiplicity 1.  We also show that some topological properties of planar open books extend to planar multilinks:  symplectic fillings are negative definite and cannot contain symplectic surfaces of positive genus, and
the image of the Heegaard Floer contact invariant vanishes in $HF_{red}$. Our results for singularities are based on these topological considerations, partly using Min--Roy--Wang's work on fillings of planar spinal open books, as well as the combinatorics of lattice embeddings. 
\end{abstract}

\maketitle

\section{Introduction} Open book decompositions play an important role in 3-dimensional contact topology. In this paper, we focus on links of normal surface singularities: in this context, an open book supporting the canonical contact structure on the link can be obtained from a Milnor-type fibration given by the germ of an analytic function on the singular surface. A number of generalizations of open books exist in the literature. Part of the open book data is a fiber bundle over the circle such that the fibers (surfaces with boundary, aka {\em pages}) meet along a {\em binding} (or its generalization). For a classical open book, the binding is an oriented fibered link, and the page meets the tubular neighborhood of each binding component along a longitude. The monodromy fixes the boundary of the page.  An example of this situation is the classical Milnor 
fibration~\cite{milnor1968singular} given by
$\arg f: S^3_{\epsilon} \setminus \{f(x, y)=0\} \to S^1$, where $f\in \C[x, y]$,
$f(0, 0)=0$ is a polynomial with a critical point at the origin in $\C^2$, locally reduced at $0$, and $S^3_{\epsilon}$ a sphere of small radius $\epsilon>0$ centered at $0$. 
If the polynomial is not locally reduced, then we get a generalized open book where each page meets the tubular neighborhood of a binding component along several longitudes of the knot, that is, $m\geq 1$~boundary components of the page come together along a binding component. (We will say that the multiplicity of this binding component is $m$.)  The monodromy permutes the boundary components of the page.    This type of fibered link is called a {\em multilink} in \cite{eisenbud1985three}. Open books with multilink binding are a special case of spinal open books studied in \cite{LVHMW, LVHMW2, HRW} and of rational open books~\cite{BEVHM}. However, the spinal class is a much broader generalization: the tubular neighborhood of the binding, given by the union of solid tori in the classical setting as well as for multilinks, is replaced by a {\em spine} given by a circle bundle over an arbitrary surface with boundary.  Min--Roy--Wang obtained strong results on symplectic fillings of planar spinal contact 3-manifolds, generalizing  Wendl's theorem \cite{We}. In~\cite{PS2, beke2025unexpected}, we used these constructions to compare symplectic fillings to Milnor fibers of links of certain singularities. In this paper, we study some related questions.

The relation between analytic properties of a singularity and the topology of its link is a classical topic. One can also consider contact-topological properties of the link equipped with its canonical contact structure. In \cite{ghiggini2021surface}, Ghiggini, Golla, and the second author showed that the contact 
link is supported by a planar open book if and only if the singularity is rational with reduced fundamental cycle, so the complexity of a surface singularity is reflected in the complexity of the open book decompositions of its link. Further, links of sandwiched singularities admit planar \textit{multilink} open books with a binding component of multiplicity one, \cite{PS2}. (For the necessary  definitions see Sections~\ref{sec:sandw} and~\ref{sec:spinal}.)   We prove the converse, giving a contact-topological characterization of the sandwiched class:  

\begin{theorem}\label{thm:equivalence}
	The link $(Y,\xi_{can})$ of a normal surface singularity admits a planar multilink open book  
	with a multiplicity 1 boundary component if and only if the singularity is sandwiched.
\end{theorem}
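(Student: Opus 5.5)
The forward implication (sandwiched $\Rightarrow$ planar multilink with a multiplicity~1 component) is the result of \cite{PS2} quoted above, so the work is in the converse. Suppose $(Y,\xi_{can})$ is supported by a planar multilink open book with a binding component of multiplicity one. The plan has three steps: show that the singularity is rational, show that its fillings embed into blow-ups of $\C^2$, and then pass from this topological information to the analytic notion of being sandwiched through the resolution graph.

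\emph{Step 1: rationality.} We use the topological properties of planar multilinks announced in the abstract. Every symplectic filling of a planar multilink open book is negative definite and contains no symplectic surface of positive genus; in addition, the contact invariant has vanishing image in $HF_{red}$. Apply this to the minimal good resolution $\tilde X$, perturbed to a symplectic filling of $(Y,\xi_{can})$, whose exceptional curves are symplectic. The no-positive-genus statement forces every exceptional curve to be rational, and Heegaard Floer considerations should force the graph to be a tree. To get rationality itself I would use the $HF_{red}$ vanishing: for links of singularities, the contact invariant sits in the bottom of the lattice homology, and vanishing in $HF_{red}$ should force $\chi(\mathcal O)$-type invariants to vanish. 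The cleanest version is Némethi's result that the link is an $L$-space exactly when the singularity is rational.

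\emph{Step 2: embedding the filling into a blow-up.} Here the multiplicity~1 component is used. Following Min--Roy--Wang, for a planar spinal open book one caps off the binding. A multiplicity~1 component behaves like an ordinary binding component: attaching a $2$-handle along it, or capping, yields a closed symplectic manifold containing a sphere of self-intersection $+1$ (or a symplectic $S^2\times S^2$-type configuration). By McDuff's theorem this closed manifold is a blow-up of $\C P^2$. Hence every filling, in particular $\tilde X$, embeds symplectically into $\C P^2\#N\cptwobar$ in the complement of a line, that is, into a blow-up of $\C^2$. This gives an embedding of the intersection lattice of the resolution graph $\Gamma$ into the standard diagonal lattice $\langle -1\rangle^N$, in which the exceptional spheres are realised as iterated blow-ups.

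\emph{Step 3: lattice combinatorics to sandwiched.} A rational singularity is sandwiched exactly when its resolution graph can be enlarged, by attaching $(-1)$-vertices, to a graph that blows down to a smooth point. Using the symplectic embedding, we analyse how the exceptional classes of $\tilde X$ are expressed in terms of the $e_i$ and the line class. Because the curves are symplectic spheres in a rational surface disjoint from a line, I expect a Lisca-type / Golla--Starkston argument to identify the configuration as the proper transform of curvettas under a sequence of blow-ups. Equivalently, the complement of $\tilde X$ supplies the $(-1)$-curves needed to blow $\Gamma$ down to nothing.

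This last step is the main obstacle. Symplectic embeddings need not be holomorphic, and lattice embeddings can in principle be exotic. I would first try to show that rationality plus embeddability into a blow-up of $\C^2$ (with the $\C P^1$ at infinity avoided) forces the dual graph to be sandwiched. To do this I would induct on the number of blow-ups, using the $-1$ exceptional spheres of the ambient manifold that intersect $\tilde X$ and pseudo-holomorphic curve positivity to blow down one at a time.
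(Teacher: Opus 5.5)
Your forward direction is fine. The converse has a genuine gap, and it is exactly where you place it: Step~3 is only an expectation, and what you collect in Steps~1--2 is too weak to finish. A bare embedding of $Q_\Gamma$ into $\langle -1\rangle^N$, even together with rationality, does not imply sandwiched. For example, $D_4$ is rational and its lattice embeds in $\langle-1\rangle^4$ (center $\mapsto e_2-e_3$, leaves $\mapsto e_1-e_2,\ e_3-e_4,\ e_3+e_4$), yet it is not sandwiched.

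So Step~1 is a detour, and its inference is wrong as stated: vanishing of the image of $c^+(\xi)$ in $HF^+_{red}$ does not make $Y$ an L-space; the relevant link to rationality is \cite{Bodnar2021heegaard}. The tree property also comes from smoothing a cycle of spheres into a symplectic torus, not from Heegaard Floer theory. The conclusion of Step~2 is just Corollary~\ref{cor:nearlyL-disk} from \cite{HRW}: the filling is the complement of a multisection in a blow-up of $D^2\times D^2\subset\C^2$. No capping or McDuff argument is needed there. The $J$-holomorphic blow-down induction you sketch is not carried out. It would have to handle exceptional classes whose representatives degenerate into nodal curves containing components of the configuration.

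The missing idea is to apply adjunction not only to individual curves but to every connected subconfiguration. Since $\Gamma$ is a tree of symplectic spheres meeting positively and transversally, for each connected subgraph $\Gamma'$ the class $\sum_{v\in\Gamma'}E_v$ is represented by an embedded symplectic sphere (smooth the nodes). Write this class as $\sum a_ie_i$ in the blow-up of $\C^2$, where $\langle c_1,e_i\rangle=1$. Adjunction gives $\sum a_i(a_i+1)=2$, and positivity of area then forces exactly one coefficient $+1$ with all others in $\{0,-1\}$; this is Corollary~\ref{cor:sympsphere}. The result is precisely the $s$-embedding of Definition~\ref{def:sandwichchar}. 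The condition must hold for all connected subgraphs, not just vertices; the triangle example in Section~\ref{sec:s-embed} satisfies the vertex-wise form and is still not sandwiched.

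The second ingredient is the purely lattice-theoretic Theorem~\ref{thm:sandwich}, which replaces your holomorphic-curve induction:
\begin{enumerate}
\item For each basis vector occurring in exactly one vertex image, with a minus sign, attach a $(-1)$-leaf mapped to it. The result stays negative definite and $s$-embeddable.
\item Once no such vector remains, every $(-1)$-vertex maps to a single $e_v$. It can be blown down while preserving both properties.
\item A Stipsicz--Szab\'o--Wahl-type induction \cite{stipsicz2008rational} shows that the only minimal graph of this kind is empty.
\end{enumerate}
Hence the augmented graph blows down to nothing, so $\Gamma$ is sandwiched, with no holomorphic-curve analysis beyond \cite{HRW}.
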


The multiplicity 1 hypothesis is essential. For example, $D_4$ is not sandwiched, but we show that its link is supported by a planar multilink open book with two boundary components, both of multiplicity 2. Dropping the multiplicity hypothesis, we have 

\begin{theorem}\label{thm:rational} If the link $(Y,\xi_{can})$ of a normal surface singularity is supported by a planar multilink, then the singularity is rational. 
\end{theorem}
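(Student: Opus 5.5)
Here is the plan for Theorem~\ref{thm:rational}. The idea is to show that every minimal symplectic filling of $(Y,\xi_{can})$ behaves like the filling of a rational singularity, and then use the fact that rationality is detected by the topology of the minimal resolution.

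\textbf{Step 1: the resolution as a filling.} Let $X$ be the minimal resolution of the singularity. After a deformation to a Stein-type neighborhood, $X$ is a minimal strong symplectic filling of $(Y,\xi_{can})$.

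\textbf{Step 2: constraints from the planar multilink.} A multilink open book is a special planar spinal open book, so the results of Min--Roy--Wang apply. The plan is to use them, as announced in the abstract, to prove two properties of any minimal filling $W$ of a contact manifold supported by a planar multilink:
\begin{itemize}
\item $W$ is negative definite, with $b_1(W)=0$ (at least rationally);
\item $W$ contains no symplectically embedded closed surface of positive genus.
\end{itemize}
Concretely, $W$ should be deformation equivalent to a Lefschetz-type fibration over the disk whose regular fiber is obtained from the planar page by the orbifold-type identification of the boundary components glued along each binding circle. Negative definiteness would then follow as in Etnyre's argument for planar open books: any class of nonnegative square could be represented, after blowing up or capping, by a surface that contradicts planarity of the fibers.

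\textbf{Step 3: genus of the exceptional curves.} Applying Step 2 to $X$, every exceptional curve in the minimal resolution is a symplectic surface, so each has genus $0$. Since $b_1(X)=0$, the dual graph is a tree of rational curves.

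\textbf{Step 4: rationality via Laufer's criterion.} Rationality requires more than a tree of rational curves, namely $p_g=0$. Equivalently, by Artin's criterion, every positive cycle $Z$ supported on the exceptional set has $p_a(Z)\le 0$. I would argue by contradiction: if the fundamental cycle $Z_{\min}$ had $p_a\ge 1$, then Laufer's algorithm would produce a cycle $Z$ with
\[ Z^2 + K\cdot Z \ge 0. \]
The next move is to realize such a $Z$, after smoothing the normal crossings $\sum m_i E_i$ symplectically, as a symplectic surface of genus $p_a(Z)\ge 1$ in a filling. That contradicts the second bullet of Step 2. Smoothing a multiple curve is awkward, so I would instead use a branched-cover trick or pass to a Milnor fiber. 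An alternative would be to use the vanishing of the contact invariant in $HF_{red}$ together with Némethi's lattice cohomology to detect $p_g=0$ for almost-rational graphs.

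\textbf{Main obstacle.} Step 4, converting a cycle of positive arithmetic genus into an honest positive-genus symplectic surface, is where I expect the real difficulty. Step 2, the multilink adaptation of the Min--Roy--Wang results, should be the other substantive input.
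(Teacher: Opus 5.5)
\textbf{Verdict: genuine gap in Step 4.} Your Steps 1--3 match the paper's topological input (Proposition~\ref{prop:neg-def}, Corollary~\ref{cor:only-spheres}), with one correction. The claim that every filling has $b_1(W)=0$ is false: $S^1\times D^3$ fills the annulus open book of $S^1\times S^2$. To see that $\Gamma$ is a tree, you should instead smooth a cycle of spheres in $\Gamma$ into a symplectic torus, as the paper does.

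The real problem is Step 4, which carries the whole content of the theorem. On a tree of spheres, every reduced connected cycle satisfies $Z^2+K\cdot Z=-2$, so smoothing nodes only ever produces spheres. A cycle with $p_a(Z)\ge 1$ must therefore have some coefficient $m_i\ge 2$. Smoothing $m_iE_i$ would mean resolving push-offs of $E_i$ that meet one another algebraically $E_i^2<0$ times. Negative double points are then unavoidable, and the standard symplectic smoothing of positive transverse double points does not apply. Passing to a Milnor fiber needs smoothability, which a general normal surface singularity lacks; this is exactly the limitation noted in the remark at the end of Section~\ref{sec:cobordisms}. The ``branched-cover trick'' is unspecified. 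As written, Step 4 gives no mechanism producing a closed positive-genus symplectic surface, so the contradiction never materializes.

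Your one-line alternative is the paper's actual proof, but you are missing its key ingredient. You must show that the image of $c^+(\xi)$ in $HF^+_{red}(-Y)$ vanishes (Theorem~\ref{thm:contactvanishing}), and this does not follow from the filling results of Step 2. The paper builds a Stein cobordism (Theorem~\ref{thm:contactvanishingstatement}) in two cases:
\begin{itemize}
\item \emph{Disk page whose outer boundary has multiplicity $1$.} Composing $\Psi$ with positive Dehn twists, organized via lantern and daisy relations, turns it into $\Phi\Delta^r\delta_1^r\cdots\delta_m^r$. This is the monodromy of a sandwiched singularity built from germs $y^{m_i}+x^{m_i+1}$ that realize the same permutation, with large decorations.
\item \emph{No multiplicity-$1$ component.} One cuts out a disk fixed by $\Psi$, runs the same argument, and caps back off. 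Lemma~\ref{lem:planarspinalgraph}, via Laufer's algorithm, shows the capped model is still a rational singularity link.
\end{itemize}
Rational links are L-spaces, so naturality of $c^+$ gives the vanishing. Bodn\'ar--Plamenevskaya's criterion, which applies whenever the link is a rational homology sphere (your Step 3 supplies this), then gives rationality. No almost-rationality or lattice cohomology input is needed from you.

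Section~\ref{sec:kulikov} also offers a route closer to your spirit but with the opposite emphasis: use the rigidity of sphere classes rather than look for positive-genus surfaces. Smooth each \emph{reduced} connected subtree into a symplectic sphere. By Corollary~\ref{cor:sympsphere}, its class has at most one positive coefficient in the vanishing-cycle basis, which gives a $p$-embedding. This forces $\Gamma$ to be a \emph{pm} graph (Proposition~\ref{prop:p-embed}), and \emph{pm} graphs are rational by the Laufer-sequence argument of Lemma~\ref{lem:planarspinalgraph} (Corollary~\ref{prop:rational}).
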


The converse is not true: for example, $E_8$ is not supported by a planar multilink. We are able to give a complete characterization of the dual resolution graph of singularities whose links can be supported by planar multilinks (see Section~\ref{sec:kulikov}). They form a class of generalized Kulikov singularities of genus $0$ in the sense of Stevens \cite{stevens2018kulikov} (see also \cite[6.7.A]{nemethi2022normal}), although this description is perhaps less transparent than our result for the sandwiched case.

The starting point of our proofs is a number of topological  observations 
derived from the structural results of~\cite{HRW} that represent symplectic fillings of 
planar multilink open books as nearly Lefschetz fibrations. We show that several known properties of classical planar open books
 hold for 
planar multilinks, cf. \cite{etnyre2004planar, OSSPlanar, ghiggini2021surface}.

\begin{prop}  
Let $(Y,\xi)$ be a contact 3-manifold admitting a planar multilink open book. Then any strong symplectic filling of $(Y, \xi)$ is negative definite, cannot contain symplectic surfaces of positive genus, and its intersection form embeds in the standard diagonal lattice. 
\end{prop}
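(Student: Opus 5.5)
The plan is to reduce everything to the structure theorem of Hryniewicz--Roy--Wang / Min--Roy--Wang for fillings of planar spinal open books. A planar multilink open book is a planar spinal open book whose spine is a union of solid tori (circle bundles over disks). By \cite{HRW}, any strong filling $(W,\omega)$ of $(Y,\xi)$ is, after blowing down exceptional spheres, symplectic deformation equivalent to a \emph{nearly Lefschetz fibration} $\pi\colon W\to \Sigma$. Here the base $\Sigma$ is the spine base, a disjoint union of disks, so each component of the filling fibers over $D^2$. The regular fibers are planar surfaces, and the fibration has finitely many Lefschetz critical points. The multiplicity $m$ binding components correspond to boundary behavior along which $m$ fiber boundary components are permuted. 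I would first record precisely which degenerate fibers can occur. In the planar case there are Lefschetz singular fibers, and possibly finitely many multiple or ``nearly'' singular fibers coming from the nontrivial permutation of boundary components.

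Next I would compute $H_2(W)$ from this fibration. Since the fibers are genus $0$, $H_1$ of the fiber is generated by boundary-parallel curves. The second homology is then spanned by classes built from vanishing cycles: lantern-type combinations of Lefschetz thimbles whose boundaries cancel in the fiber. This is exactly as in the classical planar case treated in \cite{etnyre2004planar, OSSPlanar}. The key step is to mimic Etnyre's argument. Glue on a cap: fill the boundary components of the planar fiber by disks (equivalently, cap the spine off), turning $W$ into a closed symplectic manifold $X$ that contains a symplectic sphere of nonnegative square (a capped fiber, or an orbifold-type section). McDuff's theorem then implies that $X$ is rational, a blow-up of $\mathbb{CP}^2$ or a ruled surface over $S^2$. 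The fiber sphere $F$ is then a sphere of square $0$, or of square equal to the number of boundary components in the multilink setting. Its orthogonal complement contains the image of $H_2(W)$, because classes in $W$ are disjoint from the cap.

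In a blow-up of $\mathbb{CP}^2$ or $S^2\times S^2$, the complement of a positive or zero-square fiber class embeds, after the standard reduction, in $\langle -1\rangle^N$. This gives negative definiteness and the diagonal lattice embedding. For the positive genus claim: a symplectic surface $\Sigma_g\subset W$ with $g\geq 1$ would survive in the rational surface $X$ disjoint from the sphere $F$ of nonnegative square. Adjunction, together with the known classification of classes in rational surfaces, would then give a contradiction, as in \cite{ghiggini2021surface}. The vanishing of $b_1$-type issues follows because all generators come from thimbles in a planar fiber.

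The main obstacle is the capping step for a genuine multilink. When $m>1$, the monodromy permutes the boundary components of the page, so one cannot simply glue one disk per binding component trivially. I would handle this in one of two ways. The first is to pass to the finite cyclic cover that untwists the permutation near the spine. The second is to cap each spine solid torus by a disk bundle whose gluing is equivariant for the permutation, producing a closed symplectic $4$-manifold possibly with orbifold-free but nontrivially fibered cap. I would first try the direct cap: replace the neighborhood of each binding component (a solid torus in which $m$ page boundaries wind) by $D^2\times D^2$ with a Lefschetz/branched structure. I would then check that the resulting closed manifold still contains an embedded symplectic sphere of nonnegative self-intersection transverse to the cap, so that McDuff's theorem applies.
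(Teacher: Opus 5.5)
\textbf{There is a genuine gap: the cap.} Your argument depends on embedding $W$ symplectically into a closed $X$ that contains a symplectic sphere $F$ of nonnegative square, disjoint from $W$. For binding components of multiplicity $m>1$ you leave exactly this step open, and neither suggested remedy closes it. A cyclic cover cannot work in general. The minimal resolution of $D_4$ (Example~\ref{ex:d4}) is a filling of a multilink with two multiplicity-$2$ components. It retracts onto a tree of spheres, so it is simply connected, and $W$ has no unbranched cyclic cover to pass to. Even an unbranched $d$-fold cover would only recover $d\,Q_W$, which destroys the diagonal-embedding conclusion. An ``equivariant cap'' simply restates the problem. The standard symplectic capping constructions (Eliashberg, Gay) are set up for classical bindings. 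Extending them to the branched local model $(z,w)\mapsto(z,w^m)$ near the binding is precisely what is unproved.

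Even granting such a cap with $F^2=0$, your lattice step is incomplete. The complement $F^\perp=\langle F\rangle\oplus\langle-1\rangle^n$ is degenerate, and $H_2(W)\to H_2(X)$ need not be injective. So before concluding negative definiteness, you would still have to show that no non-torsion class of $H_2(W)$ lands in $\mathbb{Q}F$.

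\textbf{The missing idea.} None of this is needed, because \cite{HRW} (Theorem~\ref{thm:nearlyL-sphere}) already does the capping topologically. A minimal filling is the complement of a multisection with disk components in a Lefschetz fibration $S^2\times D^2\#n\cptwobar\to D^2$. This fibration is over a single disk; the vertebrae are disks branched-covering this base, not the base itself.

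The paper computes directly from this description. Let $W_0$ be the preimage of a small disk containing all exotic fibers. Then $H_2(W_0)=0$, so $H_2(W)$ injects into $H_2(W,W_0)=\Z\langle\alpha_i\rangle$. Its image consists of those $\sum a_i\alpha_i$ that vanish in $H_1(W_0)=H_1(P)/\!\sim$, where $\sim$ identifies meridians of the same multisection component and imposes $\sum m_i\mu_i=0$. This is the genuine multilink modification; your phrase ``boundaries cancel in the fiber'' is exactly what fails here.

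Build embedded representatives from planar regions, tubes through the holes and along vanishing arcs, and Lefschetz thimbles. This gives $[x]\cdot[x']=-\sum a_ia_i'$, which yields negative definiteness and the diagonal embedding at once. Positive genus is then excluded by three facts:
\begin{itemize}
\item $\langle c_1,[x]\rangle=2w+\sum a_i$, read off from $i_*[x]=w[S^2]+\sum a_ie_i$;
\item $w\geq 0$;
\item the adjunction formula $2w+\sum a_i+\sum a_i^2=2-2g$.
\end{itemize}
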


The topological constraints imply that for a surface singularity whose link admits a 
planar multilink open book, the resolution graph must be a tree of spheres, and that its intersection lattice admits a certain restrictive type of embedding into the diagonal lattice (see Definition~\ref{def:sandwichchar}). We then adapt combinatorial methods of~\cite{stipsicz2008rational}  to prove 
Theorem~\ref{thm:equivalence}, by giving a lattice-theoretic characterization of sandwiched graphs in Theorem~\ref{thm:sandwich}.

To prove Theorem~\ref{thm:rational}, we first establish a relevant property of the Heegaard-Floer contact invariant \cite{OSCont}, again generalizing a known result on planar contact 3-manifolds \cite{OSSPlanar}.
\begin{theorem}\label{thm:contactvanishing}
	Let $(Y,\xi)$ be a contact 3-manifold admitting a planar multilink open book. Then the image of the contact invariant $c^+(\xi)\in HF^+_{red}(-Y)$ vanishes.
\end{theorem}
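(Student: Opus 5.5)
The plan is to follow the strategy of Ozsváth--Stipsicz--Szabó for classical planar open books \cite{OSSPlanar}, replacing the planar open book by the planar multilink one. Their argument has three ingredients. First, a planar contact manifold has a Stein filling $W$ in which the contact class is the image of a generator under the cobordism map. Second, one caps off $W$ in a way that keeps $b_2^+$ under control. Third, the contact invariant is detected by a cobordism map that factors through a negative definite piece, and nonvanishing in $HF_{red}$ is then ruled out by the mixed invariant vanishing when $b_2^+=0$. Equivalently, following the formulation in \cite{ghiggini2021surface}, one shows that $c^+(\xi)$ lies in the image of $U^d$ for all $d$, so it projects to zero in $HF^+_{red}(-Y)$.

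\textbf{Step 1: reduce to a classical open book.} The cleanest route avoids fillings altogether. Given a planar multilink open book $(S,\phi)$ supporting $\xi$, I would perform a suitable contact $(+1)$- or Legendrian surgery, or simply attach Weinstein $2$-handles, along curves near each binding component of multiplicity $m>1$. The aim is to obtain a contact manifold $(Y',\xi')$ supported by a \emph{classical} planar open book, together with a Stein (hence negative definite on the relevant part) cobordism $X$ from $Y$ to $Y'$.

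One concrete option is to resolve each multiplicity-$m$ binding: cut the spine neighborhood, which is a solid torus meeting the page in $m$ longitudes, and reglue so that the $m$ boundary components become separate binding circles. This changes the monodromy by a permutation-undoing move. The hope is that this is realized by a sequence of contact $(-1)$-surgeries, possibly combined with a connected sum with a planar manifold.

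\textbf{Step 2: apply naturality.} Under a Stein cobordism $X\colon Y\to Y'$ built from Legendrian surgeries, the map $F^+_{\overline{X}}\colon HF^+(-Y')\to HF^+(-Y)$ sends $c^+(\xi')$ to $c^+(\xi)$. The map commutes with $U$, so it sends $HF^+_{red}$ to $HF^+_{red}$ and $U$-divisible classes to $U$-divisible classes. By \cite{OSSPlanar}, $c^+(\xi')$ has vanishing image in $HF^+_{red}(-Y')$, that is, it lies in $\operatorname{Im}U^d$ for all $d$. Therefore $c^+(\xi)$ also lies in $\operatorname{Im}U^d$ for all $d$, which gives the claim.

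\textbf{Alternative for Step 1.} If Step 1 does not go through, I would instead use the Proposition above. Via \cite{HRW} and Min--Roy--Wang, every strong filling of a planar multilink is a nearly Lefschetz fibration. One can then cap the multilink spine by gluing in the circle-bundle piece over a disk, giving a Lefschetz fibration over a closed surface with planar fibers after blowing up. Running the \cite{OSSPlanar} argument verbatim then shows that $c^+(\xi)$ comes from a negative definite cap and is therefore in the image of all powers of $U$.

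\textbf{Main obstacle.} I expect the key difficulty to be the compatibility step: showing that the operation turning multilink bindings into ordinary bindings is realized by a Stein or Weinstein cobordism in the correct direction, so that contact class naturality applies. The permutation of boundary components under the monodromy has to be unwound by positive Dehn twists. My first attempt would be a local model: in the neighborhood of a multiplicity-$m$ binding, the multilink open book should be obtained from a classical open book on a lens-space-like piece by Legendrian surgery on the cores. I would check this model first in the $D_4$ example with two multiplicity-$2$ bindings before attempting the general case.
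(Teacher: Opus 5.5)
Your skeleton (a Stein cobordism from $(Y,\xi)$ to a contact manifold whose contact class is known to be $U$-divisible, followed by naturality) is the same as the paper's. The gap is Step~1, which carries the entire content of the theorem, and as you set it up it cannot work.

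Weinstein $2$-handles along page curves compose the monodromy with positive Dehn twists, and these are the identity near $\partial P$. So the boundary permutation $\sigma$, and with it every binding multiplicity, is unchanged, and a classical open book is never reached this way.

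Regluing $\nu(L)$ does not help either. The page meets each boundary torus of $\nu(L_i)$ in $m_i$ parallel curves, and the monodromy must permute them transitively because the torus is connected. Hence $\sigma$ is an invariant of the fibered complement $Y\setminus\nu(L)$, independent of how the solid tori are filled.

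Your proposed local model, obtaining the multilink from a classical open book by Legendrian surgery, would in any case give a cobordism in the wrong direction. Naturality transports $U$-divisibility from the convex end down to the concave end, so $(Y,\xi)$ must sit at the bottom.

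The alternative Step~1 does not close the gap. The capping/mixed-invariant argument runs the other way: it shows that a Stein filling with $b_2^+>0$ forces $c^+(\xi)$ to survive in $HF^+_{red}$. That derives restrictions on fillings from the vanishing, not the vanishing from properties of fillings. Negative definiteness alone gives no $U$-divisibility. For a non-rational singularity with rational homology sphere link, e.g.\ $x^2+y^3+z^7=0$, the minimal resolution is a negative definite filling, yet $c^+(\xi_{can})\notin\operatorname{Im}U$ by \cite{Bodnar2021heegaard}.

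The missing idea is to keep $\sigma$ rather than undo it. The target must itself be a multilink open book on the same page $P$, with a monodromy $\Phi$ inducing the same permutation, supporting a contact manifold with $HF^+_{red}=0$. The paper uses links of sandwiched singularities whose curve germ has one component $y^{m_i}+x^{m_i+1}=0$ for each cycle of $\sigma$. These singularities are rational, so their links are L-spaces by \cite{nemethi2017links}.

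Since $\Psi\Phi^{-1}$ fixes $\partial P$ pointwise, it is a product of Dehn twists. The lantern and daisy relations let you add positive twists until the monodromy becomes $\Phi\Delta^r\delta_1^r\cdots\delta_m^r$. This monodromy is again realized by a sandwiched singularity: blowing up the germ adds $\Delta$, and raising the decorations adds the $\delta_i$.

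When no binding component has multiplicity $1$, cut out a disk fixed by $\Psi$, run the same argument, and cap off the outer boundary again. Lemma~\ref{lem:planarspinalgraph}, via Laufer's criterion, shows that the capped model is still the link of a rational singularity.
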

Theorem~\ref{thm:rational} then follows from the fact that the above property of the contact invariant of the link characterizes rational singularities by \cite[Theorem 1.2]{Bodnar2021heegaard}. One could perhaps 
expect Theorem~\ref{thm:contactvanishing} to hold for all spinal planar 
open books, via potential extensions of Wendl's results \cite{wendl2013hierarchy}
to the spinal case and  the equivalence between embedded contact homology and Heegaard-Floer theories, \cite{colin2020equiv}. Our approach using Stein cobordisms to certain model open books only applies to planar multilinks. The proof may be interesting in its own right, as we describe planar multilink open books and their monodromies for links of certain classes of rational singularities, and build Stein cobordisms from arbitrary planar multilink open books to these singularity links.

We collect the necessary definitions and known facts on surface singularities and (multilink) open books in Section~\ref{sec:sandw}. Section~\ref{sec:spinal} contains the background on nearly Lefschetz fibrations and the first topological corollaries for our setting. In Section~\ref{sec:s-embed}, we prove~Theorem~\ref{thm:equivalence}. In Section~\ref{sec:cobordisms}, we prove Theorems~\ref{thm:contactvanishing} and~\ref{thm:rational}. Finally, in Section~\ref{sec:kulikov} we extend our results to give a
characterization of surface singularities admitting planar multilink open books in terms of their dual resolution graphs. 
\subsection*{Acknowledgments}
We thank  Marco Golla, László Koltai, Alexander Kubasch, Andr\'as N\'emethi, Agniva Roy, Laura Starkston, Andr\'as Stipsicz and Luya Wang  for helpful conversations. OP has been partially supported by the NSF grant DMS 2304080. 
MB is partially supported by the Doctoral Excellence Fellowship Programme (DCEP)
funded by the National Research Development and Innovation Fund of the
Ministry of Culture and Innovation and the Budapest University of
Technology and Economics, and by the ERC Advanced Grant KnotSurf4d.

\section{Singularities and multilink open books}\label{sec:sandw}
In this section we give the necessary background on surface singularities and related topological constructions. A useful general reference is~\cite{nemethi2022normal}.
Let $(X,o)$ be the germ of a normal singular point of an analytic surface $X\subset \C^N$.
 By the classical theorem of Milnor \cite{milnor1968singular}, the intersection $X\cap S^{2N-1}_\epsilon(o)$ is a smooth 3-manifold (called the \textit{link} of $(X,o)$). The link is endowed with a canonical contact structure $\xi_{can}$, given by the complex tangencies to the link in the ambient space.
For every normal surface singularity there exists a  \textit{resolution}, that 
is, a  smooth $\tilde{X}$ 
and a map $\pi:(\tilde X,E)\to(X,o)$ that restricts to a biholomorphism $\tilde X\setminus E\to X\setminus\{o\}$. We take a {\em good} resolution, so that $E= \pi^{-1}(o)$ is a collection of exceptional curves that are  
smooth surfaces transversely intersecting at double points only. Then, $\tilde X$ is topologically a plumbing of disk bundles over these surfaces. The intersection form of $\tilde X$ is negative definite. 
We encode this manifold using the \textit{dual resolution graph} $\Gamma$, where each vertex 
$v \in \Gamma$ corresponds to a component $E_v$ of $E$.  Two vertices $v, w$ are connected by an edge if the components $E_v$ and $E_w$ intersect. Every vertex $v$ is labeled by an integer equal to the self-intersection $E_v \cdot E_v$  of $E_v$. In our setting, all exceptional curves $E_v$ will be given by spheres, and we will often blur the notational distinction between vertices and the corresponding curves.
The link $Y$ is equipped with a {\em plumbing structure} induced by its 
presentation as the boundary of the plumbing of disk bundles,
\cite{Neu}. 
For each vertex $v \in \Gamma$, we have a piece of the plumbing structure given by an oriented $S^1$-bundle over the sphere $E_v$ with holes (the number of holes corresponds to  the valency of the vertex), with the Euler number $E_v \cdot E_v$.  To form $Y$, these pieces are glued along their boundary tori, 
as dictated by $\Gamma$.

The canonical contact structure depends on the topology of the link only, rather than the analytic structure of 
$(X, o)$, and can be read off the resolution graph:  
\begin{theorem}[\cite{CNPP, ParkStipsicz}]
	Let $Y,Y'$ be links of the singularities $(X,o), (X',o')$ respectively. If $Y$ is diffeomorphic to $Y'$, then $(Y,\xi_{can})$ is contactomorphic to $(Y',\xi_{can})$. Moreover, $\xi_{can}$ can be described as the contact structure on the boundary of the plumbing of the symplectic disk bundles given by the resolution.  
\end{theorem}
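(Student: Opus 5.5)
The plan has two steps. First, use Neumann's plumbing calculus~\cite{Neu} to reduce the statement to the resolution graph. Then identify $\xi_{can}$ with a contact structure built canonically from that graph, namely the convex boundary of a symplectic plumbing.

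\textbf{Step 1: reduction to graphs.} By Neumann, the oriented diffeomorphism type of the link $Y$ determines the minimal good resolution graph $\Gamma$, up to the standard normal-form ambiguities. A diffeomorphism $Y\cong Y'$ can therefore be isotoped to one respecting the plumbing structures coming from minimal good resolutions of $(X,o)$ and $(X',o')$. If the link is a lens space, extra care is needed with orientations, which I would handle separately. It then suffices to show that $\xi_{can}$ is isotopic to a contact structure $\xi_\Gamma$ that is determined by $\Gamma$ together with its plumbing structure, up to contactomorphism preserving that structure.

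\textbf{Step 2: a model contact structure and its uniqueness.} For a negative definite graph $\Gamma$ of surfaces, build the symplectic plumbing of disk bundles with the spheres intersecting $\omega$-orthogonally. Negative definiteness gives a vector $z$ with $z>0$ and $(Q_\Gamma z)_v<0$ for all $v$. Following Gay--Stipsicz, one uses such a $z$, via the areas $\omega(E_v)$, to construct a Liouville vector field near $E=\bigcup E_v$ that is transverse outward to the boundary of a small plumbing neighborhood. The induced contact structure on the boundary is $\xi_\Gamma$. To see that $\xi_\Gamma$ is well defined up to isotopy, note that the space of admissible choices (areas, local models at double points, the vector $z$) is connected, since it is an open convex cone of $z$'s times contractible local data. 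A one-parameter family of choices gives a family of contact structures on a fixed $Y$, and Gray's theorem then yields an isotopy.

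\textbf{Step 3: identifying $\xi_{can}$ with $\xi_\Gamma$.} On $\tilde X$, take a Kähler form near $E$, for instance by pulling back the ambient form and adding a small multiple of a form positive on the exceptional curves. The complex tangencies along the boundaries of the sublevel sets $\{\rho\le \epsilon\}$ of a strictly plurisubharmonic exhaustion on $X\setminus\{o\}$ are contact structures isotopic to $\xi_{can}$ by Gray's theorem. Next, make the exceptional curves $\omega$-orthogonal at the double points by a local modification of the Kähler form, which stays symplectic. The neighborhood of $E$ then becomes a symplectic plumbing as in Step 2. Finally, interpolate linearly between the two Liouville structures near $E$; both are transverse to the boundary of a common small neighborhood, since both expand outward. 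Gray's theorem again gives an isotopy between $\xi_{can}$ and $\xi_\Gamma$, which proves both claims.

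\textbf{Main obstacle.} I expect the hardest step to be Step 3, namely controlling the Liouville field transversality uniformly during the interpolation. The pseudoconvex boundary and the model plumbing boundary must be connected through hypersurfaces that remain of contact type. I would first try to work entirely inside a thin neighborhood of $E$, where both vector fields are, to leading order, radial in the fibers of the disk bundles. There, a convex combination of two outward-radial-dominant fields stays transverse, so the contact condition is checked fiberwise plus small error terms.
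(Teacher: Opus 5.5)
The paper does not prove this statement; it quotes it from \cite{CNPP, ParkStipsicz}. Roughly, the argument in \cite{CNPP} goes through Milnor open books. $\xi_{can}$ is supported by the open book of a holomorphic function whose binding is a union of plumbing fibres (as in Lemma~\ref{lem:ob-from-f}), such open books are determined up to isotopy by the plumbing data, and Giroux's correspondence does the rest. Your route (Kähler form on $\tilde X$, $\omega$-orthogonalization, Gay--Stipsicz model, interpolation of Liouville fields, Gray) is genuinely different and would avoid open books entirely. However, the step you single out as the main obstacle is a real gap, not a technicality.

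The claim that the two Liouville fields are transverse to a common hypersurface ``since both expand outward'' does not follow, and this common transversality is the heart of the comparison. Your radial-dominance heuristic does work for the \emph{unmodified} Kähler field, provided you choose the form correctly. Take it to be $\pi^*dd^c\rho+\Theta_h$, with $\Theta_h$ the curvature of a metric on a $\pi$-ample bundle $\mathcal O(-Z)$, $Z=\sum r_vE_v$. Your ``small multiple of a form positive on the exceptional curves'' is not enough: $d\pi$ vanishes identically along every $E_v$ whose coefficient in the maximal ideal cycle is at least $2$, so positivity in the normal directions is needed. With this choice, off $E$ the form is $dd^c\psi$ with $\psi=\rho\circ\pi-\log|\sigma|_h^2$, where $\sigma$ is the section $1$ of $\mathcal O(-Z)$. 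Thus $\psi=\sum r_v\log|y_v|^2+(\text{smooth})$, and $V_0=\nabla\psi$ has radial part of order $1/r$ plus a bounded remainder.

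The problem is that you then change the form near the double points. If $\omega'=\omega+d\beta$ and $\lambda'=\iota_{V_0}\omega+\beta$, the new field satisfies $\iota_{V'-V_0}\,\omega'=\beta-\iota_{V_0}d\beta$. The term $\iota_{V_0}d\beta$ is in general of order $|V_0|\,|d\beta|\sim 1/r$, the same order as the radial term. Moreover, $d\beta$ cannot be small when $E_u,E_v$ meet far from orthogonally. So exactly in the plumbing regions the heuristic gives no control. You also need $V'$ to have no zeros in the collar between the thin tube and $\pi^{-1}\{\rho=\epsilon\}$, so that its flow identifies the two induced contact structures; this is not addressed.

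One way to close the gap is to orthogonalize at the level of potentials. Work in holomorphic coordinates $(x,y)$ at a double point with $E_u=\{x=0\}$, $E_v=\{y=0\}$. Replace the smooth part $u$ of $\psi$ by a regularized maximum of $u$ and $\ell+c+\varepsilon(|x|^2+|y|^2)$. Here $\ell$ is the pluriharmonic part of the $2$-jet of $u$ at the point, $c>0$ is small, and $\varepsilon$ is below the smallest eigenvalue of $dd^cu$ there. Since $\log|x|^2$ and $\log|y|^2$ are pluriharmonic off $E$, the following hold:
\begin{itemize}
\item the form stays Kähler;
\item near the point it becomes $\varepsilon$ times the standard form, so the curves are $\omega$-orthogonal;
\item $\nabla\psi$ becomes $\sum_i(k_i/r_i+O(r_i))\,\partial_{r_i}$ plus a bounded field, with $k_i>0$, which is essentially the Gay--Stipsicz local model with $z$ proportional to $Z$.
\end{itemize}
A hypersurface $\{h(r_1,r_2)=\delta\}$ with $h$ nondecreasing in both variables is then transverse to both fields near the double points. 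Along the rest of $E$ a thin tube works, since there both fields have positive radial part of order $1/r$.

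A smaller point: Step~1 is more than you need. The theorem asks only for some contactomorphism, so Neumann's theorem that $Y$ determines the minimal good resolution graph, together with Step~2, suffices. Isotoping the given diffeomorphism to a plumbing-preserving one is unnecessary, and it is delicate in the exceptional cases of the plumbing calculus.
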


 We will consider functions on the singular surface germ and the corresponding Milnor fibrations. Given a surface singularity germ 
 $(X,o)$ with link $Y$ and a holomorphic function $f\in \mathfrak m_{X,o}$, we have a 
 fibration $\arg f: Y \setminus L \to S^1$ in the complement of the vanishing locus $L= \{f=0\}\subset Y$. If $f$ defines an isolated singularity at $o$, the fibration gives a classical open book. In general, 
 we have a \textit{multilink} in the terminology of \cite{eisenbud1985three}.
 
 \begin{definition}
	A multilink open book for a 3-manifold $Y$ consists of an oriented link $L = \cup L_i \subset Y$ such that
	\begin{itemize}
		\item $Y\setminus\nu(L)$ fibers over $S^1$,
		\item the boundary of each page is the disjoint union of longitudes of the components $L_i$ of $L$, possibly with several copies of the longitude of the solid torus $\nu(L_i)$ for each $L_i$. 
	\end{itemize}
	We say that the binding component $L_i$ has multiplicity $m_i$ if there are $m_i$ disjoint copies of its longitude in the boundary of each page. 
	As in the classical case~\cite{giroux}, a contact structure $\xi= \ker \alpha$ is compatible with a multilink open book if $d \alpha$ is an area form on the pages, and the binding is positively transverse to $\xi$.  
\end{definition}

 A multilink open book that comes from a Milnor fibration associated to a function $f$ on the germ of a singularity is compatible with the canonical contact structure on $(Y, \xi_{can})$, \cite{CNPP}.
 The multilink open book given by 
$\arg f$  can be seen from the divisorial data of the function $f \in \mathfrak m_{X,o}$. We will specify the 
binding components (up to isotopy) and their multiplicities.  When $Y$ is a rational homology sphere, 
the binding determines the open book up to isotopy, \cite{CPP, PS2}. 

Consider the pullback $f\circ \pi$ to a good resolution $\tilde X\xrightarrow{\pi} X$, and assume additionally that the divisor of the total transform 
$\div (f \circ \pi)$ has normal crossings. Then $\div (f \circ \pi)$ 
decomposes as the strict transform $(f \circ \pi)_s$ and the exceptional part (a linear combination of the exceptional curves of the resolution, taken with some multiplicities). We have
\begin{equation}\label{eq:totaltransform}
\div f = (f \circ \pi)_e+ (f \circ \pi)_s = \sum r_v E_v+ \sum a_v A_v, \quad r_v, a_v \in \Z_{\geq 0}, 
\end{equation}
where $E_v$ denotes the exceptional curves of the resolution, and each $A_v$ is a 
non-compact divisor (also called a \textit{cut}) on $\tilde{X}$ such that $A_v \cdot E_v =1$ and $A_v \cdot E_w = 0$ for $w\neq v$.
We can picture the pair $(X,f)$ by extending the resolution graph with \textit{arrowhead} vertices corresponding to the strict transform of the function. To encode the divisor of the strict transform, the  
  arrowhead vertices come with multiplicities that indicate the vanishing order of the function. 

The binding of the multilink open book given by $\arg f$ can be described as the union 
of some $S^1$-fibers of the plumbing structure on $Y$. 
Specifically, a component of the strict transform 
$(f \circ \pi)_s$ that intersects $E_v$ gives a multilink 
binding component which is a fiber over $E_v$, with multiplicity that equals the multiplicity of the corresponding arrowhead: 
\begin{lemma}\label{lem:ob-from-f}   \cite{CNPP} A holomorphic function $f$ on $(X, o)$ whose total transform divisor is given by~\eqref{eq:totaltransform} induces a multilink open book decomposition of the link $(Y, \xi_{can})$ with the following binding data: each component $A_v$ with $A_v \cdot E_v=1$ contributes a binding component of multiplicity $a_v$, given by an $S^1$-fiber over the curve $E_v$ in the plumbing structure on $Y$.
\end{lemma}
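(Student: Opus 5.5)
The plan is to work locally near the strict transform and to read off the structure of the fibration $\arg(f\circ\pi)$ there. Since $\pi$ is a biholomorphism off $E$, the link $Y$ can be identified with the boundary $\partial \tilde X_\epsilon$ of a small regular neighborhood of $E$ in $\tilde X$. This neighborhood is a plumbing of disk bundles, and its boundary carries the plumbing structure. The Milnor fibration $\arg f$ on $Y\setminus L$ becomes $\arg(f\circ\pi)$ restricted to $\partial\tilde X_\epsilon$. The zero locus $L$ is then $\partial \tilde X_\epsilon \cap (f\circ\pi)_s$, because the exceptional part of the divisor lies in the interior.

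First, I would use the normal crossing hypothesis to choose local holomorphic coordinates $(x,y)$ near the point $p=A_v\cap E_v$. We may take $p$ away from the other exceptional curves, with $E_v=\{x=0\}$ and $A_v=\{y=0\}$. In these coordinates $f\circ\pi = u\, x^{r_v} y^{a_v}$ with $u$ a unit. I would arrange $\partial\tilde X_\epsilon$ near $p$ to be the tube $\{|x|=\delta\}$, i.e. the restriction of the circle bundle over $E_v$ to a disk around $p$. The intersection with $A_v=\{y=0\}$ is then the circle $\{|x|=\delta, y=0\}$, which is exactly an $S^1$-fiber over $E_v$ in the plumbing structure. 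This identifies the binding component.

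Second, I would compute the multiplicity. On a small torus $\{|x|=\delta,|y|=\eta\}$ around this fiber, after shrinking and homotoping $u$ to a constant, the fibration is $\arg(x^{r_v}y^{a_v})$. Here $x$ parametrizes the fiber direction, i.e. the longitude of the binding circle, and $y$ the meridian. A page meets the torus in the curves $r_v\theta_x + a_v\theta_y = \mathrm{const}$. These are $\gcd(r_v,a_v)$ parallel curves. Each has homology class proportional to $a_v\lambda - r_v\mu$ up to orientation conventions, so the boundary of a page winds $a_v$ times along the binding.

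This is the main subtle point. The multilink convention records the page boundary as $a_v$ copies of a longitude, but here the longitude is the framing determined by the page itself, which need not be the plumbing-fiber framing. I would handle it by noting that in Eisenbud--Neumann's multilink formalism the multiplicity of a component is exactly the coefficient $a_v$ of the meridian-pairing. Concretely, $\arg f$ evaluated on a meridian of $L_i$ equals $a_v$, since $\arg(y^{a_v})$ winds $a_v$ times. Equivalently, each page meets the boundary of a meridian disk $a_v$ times, so $m_i=a_v$. Distinct $A_v$ give disjoint fibers, and together they give all of $L$.

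Finally, compatibility with $\xi_{can}$ follows from the statement attributed to \cite{CNPP} above: the Milnor fibration of any $f$ supports $\xi_{can}$. This uses transversality of the binding to $\xi_{can}$ and the fact that $d(d^c\rho)$ restricted to the pages is positive. The only real obstacle is the framing and multiplicity bookkeeping in the second step; the rest is local normal-form computation.
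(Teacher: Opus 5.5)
The paper gives no proof of this lemma. It imports it from \cite{CNPP}, together with the compatibility statement just before it, so your local normal-form argument supplies something the paper only cites. Your route is the standard one, and its core is the same chart computation the paper performs in the proof of Lemma~\ref{lem:planarspinalgraph} (with $x$ and $y$ swapped). There, the level set $a_v\alpha+r_v\beta=\mathrm{const}$ converts the page framing into the plumbing slope $-r_v/a_v$.

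Your identification of the binding with the fiber $\{|x|=\delta,\ y=0\}$ over $E_v$ is correct, as is the degree $a_v$ of $\arg f$ on its meridian. Deferring compatibility with $\xi_{can}$ to \cite{CNPP} is exactly what the paper does.

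What the citation covers, and you only assert, is the global step of your first paragraph. First, $\arg(f\circ\pi)$ must be a fibration on all of $\partial\tilde X_\epsilon\setminus L$. This is a quick piecewise check: along the circle fibers over generic points of $E_v$ its derivative is $r_v+O(\delta)$, and on the tori over $E_v\cap E_w$ it is $r_v\theta_x+r_w\theta_y$ up to a term with small derivative. Both are nonvanishing since all $r_v>0$. Second, this fibration must be isotopic to Milnor's $\arg f$ on $X\cap S_\epsilon$. That is genuine input and should be cited rather than treated as automatic.

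One claim needs correcting. Your reconciliation with the paper's definition, ``$a_v$ copies of the longitude in the page framing'', holds only when $a_v\mid r_v$. Set $d=\gcd(r_v,a_v)$. Each of the $d$ level curves on the torus has class $\pm\bigl((a_v/d)\lambda-(r_v/d)\mu\bigr)$, so it meets the meridian $a_v/d$ times. If $a_v\nmid r_v$, no framing makes these curves longitudes. The local picture is then not the branched-cover model of Section~\ref{sec:spinal}: the monodromy cyclically permutes only $d$ boundary components there, not $a_v$.

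In that case the lemma holds only in the sense of \cite{eisenbud1985three}, where multiplicity is the degree of $\arg f$ on a meridian, namely $a_v$. That is your fallback, and it agrees with the paper's remark that an arrowhead contributes $\gcd(r_v,a_v)$ boundary components. In the functions the paper actually uses, every arrowhead has $a_v=r_v$ (for instance $r_0=a_0=1$ at $E_0$, and the equality noted in the proof of Lemma~\ref{lem:algplanar}). So nothing downstream is affected, but your write-up should state which notion of multiplicity it establishes.
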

In particular, the multiplicity  is $1$ if the corresponding component of 
$(f \circ \pi)_s$ is reduced, so that we get a classical open book if 
$(f \circ \pi)_s$ is a reduced divisor.

Because of the construction of open books from the strict transform of  holomorphic functions on $(X, o)$, we would like to know when a prescribed combination $\sum a_v A_v$ of arrowheads with nonnegative multiplicities can be realized as a strict transform of a function.  An equivalent question is when a prescribed effective  divisor $\sum r_v E_v$ can be the exceptional part of the total transform. Indeed,  
if $\div f= \sum r_v E_v+ \sum a_v A_v$ as in~\eqref{eq:totaltransform}, then
\begin{equation}\label{eq:div0}
0= (\div f) \cdot E_v= \left(\sum r_v E_v\right)\cdot E_v + a_v,
\end{equation} so the multiplicities of the arrowheads are uniquely determined by the exceptional part of the total transform of the function, 
and, conversely, $\sum a_v A_v$ uniquely determines $\sum r_v E_v$ since the intersection form 
$E_i \cdot E_j$ is negative definite \cite{grauert1962modifikationen}. 
By~\eqref{eq:div0}, we also see that the exceptional part $D_e= (f \circ \pi)_e = \sum r_v E_v$  must be in the Lipman cone
 \begin{equation}\label{eq:lipman}
 \mathcal{S}=\{D\in L\ |\ D = \sum r_v E_v, \quad  r_v > 0, \quad      \forall v\ D\cdot E_v\leq 0 \}.
 \end{equation} 
For rational singularities, the converse is also true: 

\begin{lemma}\cite[Remark 7.1.14]{nemethi2022normal}  If $(X, o)$ is a rational singularity, and $D\in \mathcal{S}$, then there is a holomorphic function 
$f$ on $(X, o)$ such that $(f \circ \pi)_e= D$.  
\end{lemma}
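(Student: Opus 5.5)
The plan is to reduce the existence of a function with prescribed exceptional part to a vanishing statement for $H^1$ of a line bundle. Then we use rationality, in the form of Artin's criterion or Lipman's vanishing theorem for rational singularities, to kill that cohomology. The setup is $D=\sum r_v E_v\in\mathcal S$ on a good resolution $\pi:\tilde X\to X$. We consider the line bundle $\mathcal O_{\tilde X}(-D)$ and the ideal $\pi_*\mathcal O_{\tilde X}(-D)\subset\mathfrak m_{X,o}$. Every $f$ in this ideal has $(f\circ\pi)_e\geq D$. The goal is a section whose exceptional part is exactly $D$.

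The key steps, in order, are as follows.

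First, we invoke the standard vanishing for rational singularities. If $(X,o)$ is rational and $D'$ is an effective cycle with $D'\cdot E_v\le 0$ for all $v$ (nef $-D'$), then $H^1(\tilde X,\mathcal O_{\tilde X}(-D'))=0$. This is Lipman's theorem, proved via the Laufer/Artin computation sequence and $p_a=0$ for all positive cycles.

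Second, for each vertex $v$ we look at the exact sequence
\[0\to\mathcal O_{\tilde X}(-D-E_v)\to\mathcal O_{\tilde X}(-D)\to\mathcal O_{E_v}(-D)\to 0.\]
We need the restriction map $H^0(\mathcal O(-D))\to H^0(\mathcal O_{E_v}(-D))$ to be surjective. The line bundle $\mathcal O_{E_v}(-D)$ on $E_v\cong\mathbb P^1$ has degree $-D\cdot E_v\ge 0$, so it has nonzero sections, including ones not vanishing at any prescribed finitely many points. Surjectivity follows if $H^1(\mathcal O(-D-E_v))=0$. This is the delicate point, since $D+E_v$ need not lie in $\mathcal S$. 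I would handle it by the Laufer-type argument: write $H^1(\mathcal O(-D-E_v))$ via a computation sequence from $D+E_v$ up to a cycle in $\mathcal S$. Each step adds some $E_w$ with $(D_i)\cdot E_w>0$, where the bundle $\mathcal O_{E_w}(-D_i)$ has degree $\ge -1$ and hence no $H^1$ on $\mathbb P^1$. This reduces the question to the vanishing in the first step.

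Third, we conclude. By surjectivity, for each $v$ there is a section $f_v$ of $\mathcal O(-D)$ whose image in $H^0(\mathcal O_{E_v}(-D))$ is nonzero. Such an $f_v$ vanishes to order exactly $r_v$ along $E_v$. A general linear combination $f=\sum c_v f_v$ then vanishes to order exactly $r_v$ along every $E_v$ simultaneously, since the bad coefficient vectors form a finite union of proper linear subspaces. Hence $(f\circ\pi)_e=D$. Note that $f\in\mathfrak m_{X,o}$ because $D>0$, and pulling back from $\tilde X$ to $X$ uses normality, $\pi_*\mathcal O_{\tilde X}=\mathcal O_X$.

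The main obstacle is the second step: the vanishing of $H^1(\mathcal O(-D-E_v))$ when $D+E_v\notin\mathcal S$. The cleanest route is to cite the generation statement for rational singularities: $\mathcal O(-D)$ is globally generated for $D\in\mathcal S$ (Lipman, Theorem 12.1). Global generation immediately gives, for each $v$, a section not vanishing identically on $E_v$ beyond order $r_v$, which bypasses the computation-sequence bookkeeping.
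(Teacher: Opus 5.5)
Your proof is correct. The paper gives no argument of its own here, only the citation to N\'emethi's Remark~7.1.14. What you give is the standard reasoning behind that remark. For a rational singularity and $D\in\mathcal S$, one has $H^1(\mathcal O_{\tilde X}(-D))=0$ and $\mathcal O_{\tilde X}(-D)$ is generated by global sections (Lipman, Theorem~12.1). So for each $v$ some $f_v\in\pi_*\mathcal O_{\tilde X}(-D)$ restricts nontrivially to $E_v$, and a generic linear combination vanishes to order exactly $r_v$ along every $E_v$.

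Compared with the bare citation, this makes explicit where rationality enters. Base-point freeness also gives, by Bertini applied to a finite-dimensional subsystem that is still base-point free near $E$, a function whose strict transform is smooth and meets $E$ transversally at smooth points of $E$. That is the normal crossings situation the paper assumes when it reads off the multilink binding from $(f\circ\pi)_s$.

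One caveat applies if you want to avoid quoting global generation. In your second step, the claim that each $\mathcal O_{E_w}(-D_i)$ in the computation sequence from $D+E_v$ to $s(D+E_v)$ has degree $\ge -1$ is not automatic; it is exactly where rationality is used. Given Lipman vanishing for $s(D+E_v)$, the sequence by itself only yields $h^1(\mathcal O(-D-E_v))=\sum_i (D_i\cdot E_{w_i}-1)$, so you need every step to have intersection exactly $1$. This does hold:
\begin{itemize}
\item Write $D_i=D+Z_i$ with $Z_0=E_v$.
\item Since $D\cdot E_w\le 0$, each step satisfies $0<D_i\cdot E_{w_i}\le Z_i\cdot E_{w_i}$. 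Hence $(Z_i)$ is an initial segment of a Laufer computation sequence for $Z_{min}$ starting at $E_v$.
\item For a rational singularity, every step of such a sequence has $Z_i\cdot E_{w_i}=1$, because $p_a(Z_{min})=0$ and $p_a$ increases by $Z_i\cdot E_{w_i}-1\ge 0$ at each step.
\end{itemize}
Therefore $D_i\cdot E_{w_i}=1$ throughout, and $H^1(\mathcal O(-D-E_v))\cong H^1(\mathcal O(-s(D+E_v)))=0$ by Lipman vanishing. This gives the surjectivity in your second step directly, without appealing to Theorem~12.1's generation statement.
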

 In general, for any divisor $D\in\mathcal{S}$ there is a singularity $(X', o)$ 
 of the given topological type  such that $D$ is realized by a function on $(X', o)$ (\cite[Theorem 2.5.8]{nemethi2022normal}). We will not use this fact.

\begin{remark}
 A'Campo's formula computes the (topological) Euler characteristic of a smooth fiber of $f$ via the formula
\begin{equation}\label{eq:chi}
	\sum_{v\in\Gamma} r_v(2-d_v),
\end{equation}
 where the sum runs over vertices of $\Gamma$, and $d_v$ is the degree (valency)  of vertex $v$ including arrowhead vertices corresponding to components of $\div(f)$,  \cite[4.1.22]{nemethi2022normal}.  The number of boundary components corresponding to an arrowhead $a$ is given by $\gcd(r_v,a_v)$, where $v$ is the vertex adjacent to the arrowhead $a$, \cite[4.1.C]{nemethi2022normal}. This allows us to determine the genus of the smooth fiber. 
 
 The multilink open book from Lemma~\ref{lem:ob-from-f} can be converted into an honest open book if the strict transform of the holomorphic function is deformed to obtain a reduced divisor. 
 This results in cabling the multilink binding \cite{eisenbud1985three}, and by~\eqref{eq:chi} the new open book typically has higher genus.  
% by computing \eqref{eq:chi} and equating its value with $2-2g-\sum_\Gamma\gcd(r_v,a_v)$. 
\end{remark}

In this paper, we work with rational and sandwiched singularities. 
By definition, rational singularities have geometric genus  
$p_g= h^1(\mathcal O_{\tilde X})$ equal to zero. They can also be characterized by the fact that their links are L-spaces in the Heegaard Floer sense, \cite{nemethi2017links}.  Rationality can be detected combinatorially from the dual resolution graph (using Laufer's algorithm, which we describe and apply in Section~\ref{sec:cobordisms}).  Sandwiched singularities are a subclass of rational singularities, defined combinatorially as follows.
\begin{definition}[{\cite[Definition 1.9]{spivakovsky1987sandwiched}}]
	A dual resolution graph $\Gamma$ is called \textit{sandwiched} if it is a subgraph of another graph $\tilde\Gamma\supset\Gamma$, such that $\tilde\Gamma$ can be blown down to the empty graph.
\end{definition}
\begin{remark}\label{rmk:add-leaves} We can further assume that $\tilde\Gamma$ is 
obtained from $\Gamma$ by adding leaves with framing $(-1)$. 
This is equivalent to the above definition 
by~\cite[Proposition 1.13]{spivakovsky1987sandwiched}, see also Lemma~\ref{lem:spivakovsky}. Below, we will always assume that the augmented graph 
$\tilde\Gamma$ has this specific form.   
\end{remark}
\begin{example}
	The smallest example of a non-sandwiched graph is $D_4$. We can see this by trying to append $-1$ vertices to either one of the leaves or the node. In both cases, after blowing down twice we see a vertex with $0$ self-intersection, so the augmented graph is not negative definite, and it cannot blow down to the empty graph.
	\begin{figure}[ht!]
	\begin{minipage}{0.5\textwidth}
		\begin{tikzpicture}[scale=0.75, every node/.style={circle, fill=black, inner sep=1.1pt}]
			\node[label=above:$-2$] (c) at (0,0) {};
			\node[label=below:$-2$] (l1) at (0,-1) {};
			\node[label=below:$-2$] (l2) at (-1,1) {};
			\node[label=below:$-2$] (l3) at (1,1) {};
			\draw  (c)--(l1) (c)--(l2) (c)--(l3);
		\end{tikzpicture}\hspace{50pt}
	%\end{minipage}
		%\begin{minipage}{0.45\textwidth}
		\begin{tikzpicture}[scale=0.75, every node/.style={circle, fill=black, inner sep=1.1pt}]
			\node[label=above:$-2$] (c) at (0,0) {};
			\node[label=below:$-2$] (l1) at (0,-1) {};
			\node[label=below:$-3$] (l2) at (-1,1) {};
			\node[label=below:$-2$] (l3) at (1,1) {};
			\draw  (c)--(l1) (c)--(l2) (c)--(l3);
		\end{tikzpicture}
	\end{minipage}
	\caption{The graph $D_4$ (left) and a sandwiched surface singularity resolution graph.}
	\label{fig:sandwichex}
		\end{figure}
		On the other hand,  this graph becomes sandwiched if the framing on any vertex is lowered. The graph on the right of Figure~\ref{fig:sandwichex} blows down starting from either of the $-2$ leaves.
\end{example}

A sandwiched singularity can be encoded by a decorated plane curve singularity germ, \cite{djvs}. Appropriate deformations of the decorated germ lead to a description of deformations of the sandwiched  singularity and of symplectic fillings of its link, see \cite{djvs, PS2}. The plane curve 
germ  is constructed as follows: for each additional $(-1)$ curve augmenting the resolution as in Remark~\ref{rmk:add-leaves},  take (a germ of) a small complex disk $\ti{C}_i$  transverse to the $(-1)$ curve. When the augmented graph $\tilde\Gamma$ is blown down to a smooth point,  the image of these disks under the blowdown gives a (typically reducible) germ $C= \cup C_i$  of a curve singularity in $\C^2$. An integer decoration $w_i=w(C_i)$ for each irreducible component $C_i$ is given by the sum of the multiplicities of the corresponding disk on the exceptional divisors during the blowdown. One can reconstruct the dual resolution graph via the embedded resolution of the decorated germ (note that the decoration can encode additional blowups required when the strict transform of the germ has no more singularities), see 
\cite{djvs, PS2} for details.  We will use the following 
notation to record where the $(-1)$ vertices are attached to $\Gamma$ to 
form the
augmented graph $\tilde\Gamma$:  $E(C_i)$
denotes the exceptional curve (as well as the vertex of $\Gamma$) such that the disk $\ti{C}_i$ intersects a $(-1)$ curve attached to $E(C_i)$. Note that $E(C_i)$ may be the same for different $C_i$, if there are several $(-1)$ vertices  attached to the same 
vertex of $\Gamma$. Additionally, let $E_0$ correspond to the last surviving vertex of $\Gamma$ under the blowdown sequence for the graph~$\tilde\Gamma$.

For a sandwiched singularity $(X, o)$, 
\cite[Lemma 3.3]{PS2} constructs a planar multilink open book from the augmented 
sandwiched graph $\tilde\Gamma\supset \Gamma$ and the corresponding decorated germ $C= \cup C_i$. In more detail, this open book is obtained by 
constructing an appropriate holomorphic function on $(X,o)$ and examining its strict transform as in Lemma~\ref{lem:ob-from-f}.  Choosing coordinates on $\C^2$ so that none of the irreducible components 
$C_i$ of $C$ have vertical tangent cones, 
consider 
the projection $pr: \C^2 \to \C_x$ and take its pullback to the 
blowup of $\C^2$ 
that recovers the augmented graph $\tilde\Gamma$. Restricting to (a neighborhood of) $\Gamma$ and contracting the resolution graph, we get a holomorphic function $g: (X,o) \to \C$. By construction, 
the strict transform  $(g\circ \pi)_s$ of $g$ under the good resolution 
$\pi: \tilde{X} \to X$ given by $\Gamma$ can be computed from the multiplicities $m_i$ of the component $C_i$ at $0$: the function 
$g$ has an arrowhead of multiplicity $m_i$ at each $E(C_i)$, and additionally an arrowhead 
of multiplicity 1 at $E_0$.  (This follows from the calculation of multiplicities of the total transform of the projection function on the exceptional curves under the blowup of $\C^2$ according to $\tilde \Gamma$: 
the function $pr$ vanishes to order $m_i$ along the $(-1)$ curve attached to $E(C_i)$, see \cite[Lemma 3.3]{PS2}.)  

Another observation will be useful for the future: by tracking the multiplicities of the total 
transform of $pr$ inductively through each stage of the blowup, we see that the curve $E_0$ always enters with multiplicity $1$. Indeed, this is true at the first stage when we blow up a smooth point in $\C^2$ and get the strict transform of the smooth function $pr$ transverse to the exceptional curve. At the subsequent stages, the multiplicity of $E_0$ never increases in the total transform. It follows that the 
total transform of $g$ (with respect to the graph $\Gamma$) has the form 
\begin{equation}\label{eq:divisor}
 \div g = E_0 + \sum_{v \in \Gamma, E_v \neq E_0} r_v E_v + A_0+\sum_{i=1}^n m_i A_i,
\end{equation}
where, as in~\eqref{eq:totaltransform},  $\sum_{v \in \Gamma} r_v E_v$ is the exceptional part, and $A_0+\sum_{i=1}^n m_i A_i$ is the strict transform, constructed from the decorated germ by taking the arrowheads over the curves $E(C_i)$ with multiplicities $m_i$, and one more arrowhead $A_0$ of multiplicity $1$ over $E_0$.

We can now explicitly describe the multilink open book given by  Milnor fibration of $g$.

\begin{lemma} \cite{PS2} \label{lem:monodromygerm}
(1) The contact link $(Y, \xi_{can})$ of a sandwiched singularity
is supported by an open book with the binding given by the union of some
$S^1$-fibers (with multiplicities) of the plumbing structure given by the resolution graph $\Gamma$ on $Y$. The decorated germ $C= \cup C_i$
determines the binding data: each component $C_i$ of $C$ corresponds to a binding component of multiplicity $m_i$ given by a fiber over $E(C_i)$, and there is an additional binding component of multiplicity 1 given by a fiber 
over~$E_0$.  

(2) The multilink open book described in (1) is planar. The page is a disk with holes, where each arrowhead $A_i$ contributes $m_i$ holes, and the arrowhead $A_0$ corresponds to the outer boundary component of the disk. The monodromy is the same as the monodromy of its decorated curve germ (see below), up to the boundary parallel Dehn twists around the holes in the planar page. 
\end{lemma}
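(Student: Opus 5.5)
Part (1) follows from the construction preceding the statement together with Lemma~\ref{lem:ob-from-f}. I would take the decorated germ $C=\cup C_i$ with coordinates chosen so that no $C_i$ has a vertical tangent cone. The linear projection $pr$ pulls back to the blowup realizing $\tilde\Gamma$, and restricting near $\Gamma$ and contracting gives the function $g$ on $(X,o)$. Its total transform is \eqref{eq:divisor}, and I would verify this in two steps. First, I would compute multiplicities of $pr$ inductively along the blowup sequence, checking that $pr$ vanishes to order $m_i$ along the $(-1)$ curve attached to $E(C_i)$; since that $(-1)$ curve is removed, it becomes a cut $A_i$ of multiplicity $m_i$ meeting $E(C_i)$. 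Second, I would check that $E_0$ enters with multiplicity $1$ and that the strict transform of $\{x=0\}$ gives the arrowhead $A_0$ on $E_0$. Lemma~\ref{lem:ob-from-f}, together with the compatibility result of \cite{CNPP}, then yields a multilink open book supporting $\xi_{can}$ whose binding consists of $S^1$-fibers over $E(C_i)$ with multiplicity $m_i$ and a fiber over $E_0$ with multiplicity $1$.

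For part (2), the plan is to identify the Milnor fiber of $g$ with a fiber of $pr$ restricted to a neighborhood of the curve configuration. A smooth fiber $\{x=\epsilon\}$ of $pr$ in a small ball is a disk. Blowing up does not change the total space away from the exceptional set. So the Milnor fiber of $g$ is this disk minus small neighborhoods of the points where it meets the removed $(-1)$ curves, i.e.\ minus neighborhoods of its intersections with the branches $\tilde C_i$. Near each such point the line $x=\epsilon$ meets $C_i$ in $m_i$ points, since $C_i$ has multiplicity $m_i$ and a non-vertical tangent cone. This gives $m_i$ holes per arrowhead $A_i$, and the outer boundary circle $|y|=\delta$ corresponds to $A_0$. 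As a check, A'Campo's formula \eqref{eq:chi} should give $\chi=1-\sum m_i$, with $\gcd(r_v,a_v)$ boundary components for each arrowhead, consistent with genus zero.

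For the monodromy, I would compare $\arg g$ with $\arg x$ on the complement of the curve. Going around $|x|=\epsilon$, the $m_i$ punctures coming from $C_i$ move as the braid of the branches of $C$ in the disk. This is the monodromy of the decorated germ in the sense of \cite{djvs, PS2}, namely the braid monodromy of $C$ as a mapping class of the punctured disk. Passing from punctures to holes of fixed size introduces only boundary-parallel Dehn twists, which accounts for the ambiguity in the statement.

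The main obstacle I expect is justifying that the Milnor fibration of $g$ on $Y$ agrees, up to isotopy, with this "vertical line" fibration on the complement of $C$. That requires a careful argument that the sphere and the polydisk $\{|x|\le\epsilon,\ |y|\le\delta\}$ give equivalent fibrations after removing tubular neighborhoods of the $(-1)$ curves. Multiplicities $m_i>1$ make this delicate, since boundary components are then permuted rather than fixed.
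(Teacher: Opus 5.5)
Your part (1) is the paper's argument. You pull back $pr$ along the blowups realizing $\tilde\Gamma$, note that $E_0$ gets coefficient $1$ and that the strict transform of $\{x=0\}$ meets only $E_0$ (every later center lies on a strict transform of $C$, whose tangent directions are non-vertical), read off the arrowheads, and apply Lemma~\ref{lem:ob-from-f} with \cite{CNPP}.

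For part (2) your route is different. The paper, deferring to \cite{PS2}, compares the open book with the germ bundle through the de Jong--van Straten Milnor fibers. One takes a picture deformation of $(C,w)$, blows up, and removes the curvettas. The projection then induces a multilink open book on the boundary whose braid-group monodromy agrees with that of $C$. The boundary twists are traced to the free marked points, and the paper uses exactly this later (Remark~\ref{rmk:bdrytwists}, proof of Theorem~\ref{thm:contactvanishingstatement}). You instead stay on the resolution and read the Milnor fibration of $g$ off the line slices $\{x=\eta\}$. This needs no deformation theory. As you set it up, it only pins down the braid-group image, but that is all the lemma claims.

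The step you call the main obstacle carries the whole monodromy claim, and as written it is misdescribed. The line $\{x=\epsilon\}$ never meets the $(-1)$ curves $F_i$, since they lie in $\{x\circ\pi=0\}$. What must be removed from the blown-up polydisk $Z_B$ is a tubular neighborhood of the \emph{curvettas} $\tilde C_i$, not of the $F_i$; removing the closed curves $F_i$ would leave a manifold with disconnected boundary. Removing $\nu(\tilde C_i)$ punctures $F_i$, and the disk $F_i\setminus\nu(\tilde C_i)$ retracts to the point $F_i\cap E(C_i)$, so $Z_B\setminus\nu(\tilde C)\cong\tilde X$. This is the undeformed ($s=0$) version of the de Jong--van Straten construction the paper invokes.

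In this model the identification is immediate.
\begin{enumerate}
\item Since $\{x=\eta\}$ avoids the exceptional set, the fiber of $G=x\circ\pi$ is exactly $\{x=\eta,\ |y|\le\delta\}$ minus one disk around each point of $C\cap\{x=\eta\}$.
\item On each solid torus $\partial\nu(\tilde C_i)$ one has $G\approx h\,u^{m_i}$, where $h$ is a unit and $u$ is a coordinate along $\tilde C_i$ vanishing on $F_i$. This is the standard multiplicity-$m_i$ binding model.
\item Its core is a meridian of $\tilde C_i$ lying on $F_i$, which is isotopic to a fiber over $E(C_i)$.
\end{enumerate}
So the cyclic permutation of the $m_i$ holes is just that of the points of $C_i$ on the line, and nothing is delicate when $m_i>1$. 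Since $Y$ is a rational homology sphere and the bindings with multiplicities agree, this fibration is isotopic to the Milnor open book by the uniqueness result quoted in Section~\ref{sec:sandw} (\cite{CPP, PS2}). The only remaining freedom is the framing of the hole boundaries, i.e.\ boundary-parallel twists.

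Finally, your A'Campo computation is more than a check. Since $F_i$ is blown up at a point lying only on $E(C_i)$, $r_{E(C_i)}=r_{F_i}=m_i$, so $\gcd(r_v,a_v)=m_i$. Then $\chi=1-\sum m_i$ with $1+\sum m_i$ boundary components, together with connectedness from $r_{E_0}=1$, already proves planarity and the hole count. Only the monodromy statement needs the model above.
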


By the monodromy of the germ in part (2) we mean the monodromy of the surface bundle over $S^1$ that is obtained as the restriction of the projection $pr: \C^2 \to \C_x$ to $(\C^2 \setminus C) \cap 
\{|x|=\epsilon\}$  for a small $\epsilon >0$. The fiber of this bundle is a disk (a complex line) punctured at the intersection points with $C$, where each component $C_i$ contributes the number of punctures equal to its multiplicity $m_i$.  This fiber bundle is closely related to an open book supporting the link  $(Y, \xi_{can})$ of the sandwiched singularity $(X, 0)$. 
The relation comes  from de Jong--van Straten description of the Milnor fibers of $(X, 0)$ via certain deformations of the decorated germ $(C, w)$. After a deformation, the curve configuration only has transverse multiple points
as singularities. Blowing up at all these points and the additional free marked points, determined by  the decorations $w(C_i)$, one takes the complement of (the tubular neighborhoods of) the strict transforms of the components to obtain a Milnor fiber. A compatible multilink open book, closely related to the fiber bundle over $S^1$ given by the germ complement above, can be constructed on the boundary of the Milnor fiber. Indeed, if one ignores the 
boundary Dehn twists around the punctures in the planar page by taking the forgetful map to the braid group, the images of the two monodromies in the braid group coincide.

\begin{remark} \label{rmk:bdrytwists} The boundary Dehn twists are related to the blowups at the free marked points on the components, in turn determined by the decorations $w(C_i)$, so the complete monodromy can be recovered from the decorated germ $(C, w)$. In particular, taking sandwiched singularities with the same curve germ $C$ but increasing the decorations, one can add an arbitrarily high number of positive boundary Dehn twists around each hole of the page of the open book. Note that the same germ with the higher decorations corresponds to a singularity with a different dual resolution graph (the graph acquires chains of $(-2)$ vertices accompanying  the $(-1)$ vertex augmentations), which also changes the open book in part (1) of Lemma~\ref{lem:monodromygerm}. 
\end{remark}

\section{Planar nearly Lefschetz fibrations}\label{sec:spinal}

Fibered multilinks are a very special case of spinal open books, whose symplectic fillings were recently 
studied by Roy--Wang~\cite{HRW}. We will describe the structural results of~\cite{HRW} in our particular case of open books given by planar multilinks, but first we need to clarify the connection between multilinks and the more general spinal open books. We focus on the multilink case to summarize the main points  
of \cite{HRW} in the context relevant to us, referring the reader to the original paper for full details of the general definitions. 

The notion of a  spinal open book replaces a tubular neighborhood of a binding in a classical open book 
by a more general {\em spine}. The latter can be a circle bundle over an arbitrary surface. 
Multilink open books satisfy the hypothesis of being {\em uniform with respect to the disk}, which plays an important role in~\cite{HRW} and imposes further restrictions on the spine. 

In full generality, a spinal open book uniform with respect to $D^2$  has a 
spine given by an $S^1$-bundle over a {\em vertebra} surface $V$, such that  $V$ is a simple branched cover of the disk $D^2$, see \cite[Definition 2.3]{HRW}. For example, $V$ can be an annulus, so that the pages of the open book are attached along the boundary to the two tori that form the boundary of the  spine $V\times S^1$. For multilink open books, each connected component of the spine is a solid torus, so that $V=D^2$ for each component, but the simple branched covering $V \to D^2$ given by the cross-section of a spine component may be non-trivial: the number of sheets equals the multiplicity of the corresponding component of the binding. It is important to note that due to the complexity of the spine, the (spinal) monodromy does {\em not} fully determine a general uniform spinal open book, unlike the classical case. However, a multilink open book (and the contact 3-manifold that it supports) is determined by the monodromy $\Phi$ of the surface bundle over $S^1$, given by the 
complement of the fibered multilink. 

We describe this construction in some detail. 
Consider $\R^3 =\{(r, \theta, z )\}$
with cylindrical coordinates and 
its standard contact structure $\ker({dz +r^2 \,d\theta})$. In the complement of the $z$-axis, the fibration over $S^1$ given by the vertical planes $\theta= const$ gives a model for a neighborhood of the binding of a classical open book. For a multilink open book with a binding component of multiplicity $m$, the model is given by the pullback of this open book 
via the branched covering map $(z, w) \mapsto (z, w^m)$, where $w=r e^{i\theta}$. 

Each page meets
this spine component along $m$ vertical planes in this model, cutting $m$ 
parallel longitudes on its boundary torus. If we label the corresponding boundary components of the page by $1, \dots, m$, in the cyclic 
order given by their $\theta$-coordinate, then the monodromy of the open book permutes these page boundary components by the cyclic permutation $(12 \dots m)$. For a  multilink open book with page $P$ 
and monodromy $\phi$, with $k$  binding components of multiplicities $m_1, \dots, m_k$, the monodromy permutation $\sigma$ of the boundary components of the page is the product  of $k$ cycles of length $m_1, \dots, m_k$. 

The 3-manifold with its multilink structure, up to isotopy, can then be reconstructed from 
$(P, \phi)$, by gluing in $k$ solid tori (neighborhoods of the binding components) to the boundary of the mapping torus of $\phi$ as prescribed by the permutation $\sigma$ and the standard model of the binding component of the given multiplicity. Assuming that the boundary components are labeled by numbers $1, \dots, m$ (where $m=\sum_{i=1}^k m_i$), we can say that $\phi$ induces the permutation 
$\sigma \in S_m$, and there is an exact sequence
$$
0 \to Mod (P, \d P) \to SMod(P) \to S_m \to 0.
$$
Here,   $SMod(P)$ is the spinal mapping class group, see \cite[Section 2.5]{HRW}.   Elements of $SMod(P)$  are orientation-preserving diffeomorphisms (up to isotopy) that may permute the boundary components of $P$ according to a specific model.
The classical mapping class group $Mod (P, \d P)$ consists of the isotopy classes of orientation-preserving diffeomorphisms of $P$ fixing $\d P$ pointwise.  

 It is easy to see that every multilink open book fits the definition of a spinal open book uniform with respect to the disk, 
\cite[Definition 2.3]{HRW}: a simple branched covering on the vertebrae postulated by \cite{HRW} can be obtained by deforming the standard cyclic covering described above. Conversely, every spinal open book which is uniform with respect to the disk and has disk vertebrae is a multilink open book, since disk vertebrae force all spine components to be solid tori isomorphic to a neighborhood of a binding component of multiplicity $m$.

The results of \cite{HRW} say that for planar spinal contact 3-manifolds uniform with respect to the disk, every strong symplectic filling is given by a {\em nearly Lefschetz} fibration over the disk compatible with the given spinal 
open book on its boundary (in particular, the fiber is the planar page of the spinal open book).  A nearly Lefschetz fibration is a map $\pi:W\to D^2$ that has finitely
many singular fibers of two types. The first type is Lefschetz singularities, locally modeled on $(x,y)\mapsto x^2+y^2$. The second type is ``exotic'' singular fibers locally modeled on the complement of the tubular neighborhood of the multiple-valued section
$\{x=y^2\}$ of $\C^2 \to \C$ near its branch point, that is, the model for the fibration is given by the projection  $\C^2 \setminus \{ y^2=x\} \to \C$ near 0, \cite[Definition 2.9]{HRW}.
At these ``exotic'' singularities, two boundary components of the page merge into one. A key example of a nearly Lefschetz fibration is given by the complement of a {\em multisection} in a classical Lefschetz fibration over the disk; by definition, a multisection is a simple branched covering of the base. It is not hard to see that every nearly Lefschetz fibration in fact arises as the complement of a (possibly disconnected) multisection in an appropriate Lefschetz fibration, see \cite{HRW, BH} for details.

A spinal open book structure is induced on the boundary of a 4-manifold equipped with a  nearly Lefschetz fibration: the mapping torus part of the spinal open book is given by the vertical boundary of the fibration, and the spine is given by its horizontal boundary. When the nearly Lefschetz fibration is given as the complement of a multisection, it is easy to see that the vertebrae for the spine are given by the connected components of the multisection (the multisection gives a branched covering of the disk by the vertebrae, so that the spinal open book is uniform with respect to the disk). In particular, when the multisection components are all homeomorphic to disks, we get a multilink open book.   

Similarly to the classical case, nearly Lefschetz fibrations compatible with the given spinal open book on the boundary can be encoded by positive monodromy factorizations. The factorizations are considered in the spinal mapping class group 
$SMod(P)$. A nearly Lefschetz fibration gives a factorization where each Lefschetz singularity contributes a positive Dehn twist around the corresponding vanishing cycle, and each ``exotic'' singularity contributes a {\em boundary interchange} acting as a transposition on the two boundary components of the page merged at the exotic fiber. The boundary interchange is a half-twist along an arc in the page $P$ connecting these two components of $\d P$,  
see \cite[Figure 1]{HRW} for a specific model. For a spinal open book uniform with respect to $D^2$, each boundary interchange contributes a branch point of the covering of the disk by the vertebrae: the topology of the vertebra is determined by the branched covering via the Riemann--Hurwitz formula. 

Conversely, a nearly Lefschetz fibration filling of a spinal open book
can be constructed from a factorization of the spinal monodromy.
Since the topology of the spine must be preserved, one is forced to consider only {\em admissible} positive factorizations with respect to the given spinal open book. Admissibility means that the combinatorics of the boundary  interchanges, although not the interchange arcs, needs to be fixed for all factorizations: if a particular spine component has {$m_i$} 
branch points for its vertebrae covering of the disk, then there are {$m_i$} boundary interchanges (along some arcs) of pairs of boundary components of the page that are attached to this spine component.

Since we require that each spine verterbra be a disk for multilink open books, each admissible factorization must have at most one boundary interchange connecting a pair of boundary components. For the permutation
$\sigma_\phi$ of the boundary components induced by the monodromy $\phi$, the cycles in the decomposition of $\sigma_\phi$ correspond to binding components as above. An admissible factorization decomposes each cycle of length $m_i$ into $m_i-1$ transpositions that correspond to the boundary interchanges associated with the given spine/binding component.

The main results of~\cite{HRW} state that {\em all} minimal strong symplectic fillings of a planar spinal contact 3-manifold arise as 
compatible nearly Lefschetz fibrations with the same fiber $P$.
Accordingly, if the open book is uniform with respect to the disk, all fillings can be encoded by positive admissible factorizations. We give a precise statement only for our specific case of multilinks. As discussed above, the corresponding open books are uniform with respect to the disk, and all vertebrae are disks, each giving a branched covering of the base disk whose degree is the multiplicity of the corresponding binding component. The page $P$ of the open book is a sphere with holes (whose boundary components can be permuted by the spinal monodromy), and the base of the nearly Lefschetz fibration is a disk, which leads to a very simple topological description.

\begin{theorem}[\cite{HRW}]\label{thm:nearlyL-sphere}
Let $(Y, \xi)$ be a contact 3-manifold supported by a planar multilink open book $(P, \phi)$. 
Then each minimal strong symplectic filling arises as the complement of a multisection in a Lefschetz fibration
$S^2 \times D^2 \# n \cptwobar  \to D^2$ for some $n\geq 0$. Connected components of 
the multisection are in bijective correspondence with the multilink binding components, with matching degrees/multiplicities.  Each connected component of the multisection is a disk. 

Equivalently, each filling is encoded by an admissible positive factorization of the spinal monodromy $\phi \in SMod(P)$. Such a factorization is given by a product of positive Dehn twists along essential closed curves and the boundary interchanges along some arcs connecting the components of $\d P$, 
where any two components are connected by at most one interchange. 
The spinal monodromy $\phi$ induces a permutation $\sigma$ of the components of $\d P$, and each boundary interchange induces a transposition. The cyclic orbits of the components of $\d P$ under $\sigma$ correspond to the components of the multilink binding, and each cycle is decomposed into transpositions by the boundary interchanges.  
\end{theorem}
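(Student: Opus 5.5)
The plan is to specialize the general results of \cite{HRW} on planar spinal open books uniform with respect to $D^2$ to the multilink setting, using the dictionary set up earlier in this section. First, a planar multilink open book $(P,\phi)$ is a spinal open book uniform with respect to the disk, and all its vertebrae are disks. Second, its page $P$ is a sphere with holes. By the main theorem of \cite{HRW}, every minimal strong filling $W$ admits a compatible nearly Lefschetz fibration $W\to D^2$ with fiber $P$. It is encoded by an admissible positive factorization of $\phi$ in $SMod(P)$ into positive Dehn twists and boundary interchanges. Admissibility means that for each spine component the number of interchanges is fixed by its vertebra. Since the vertebra is a disk covering $D^2$ with degree $m_i$, Riemann--Hurwitz gives $\chi(D^2)=1=m_i\cdot 1-b_i$, so there are $b_i=m_i-1$ simple branch points. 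Hence each cycle of $\sigma$ of length $m_i$ is written as a product of $m_i-1$ transpositions. A product of $m_i-1$ transpositions giving an $m_i$-cycle is a tree factorization, so no pair of boundary components is interchanged twice. This gives the factorization half of the statement.

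For the geometric half, I would use the fact that every nearly Lefschetz fibration is the complement of a multisection in a Lefschetz fibration \cite{HRW, BH}. Concretely, cap each boundary component of $P$ by a disk to get the closed fiber $S^2$. Each exotic singularity (locally the complement of $y^2=x$) is filled back in by the corresponding branched piece of the multisection, yielding a genuine Lefschetz fibration $Z\to D^2$ with fiber $S^2$. The filled-in horizontal boundary is a union of surfaces $V_i$, one per spine component. Each $V_i$ is a simple branched cover of $D^2$ of degree $m_i$ with $m_i-1$ branch points, so $V_i\cong D^2$ by the Riemann--Hurwitz count above. The components of the multisection therefore correspond bijectively to the binding components, with degree equal to multiplicity.

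It remains to identify $Z$. A Lefschetz fibration over $D^2$ with fiber $S^2$ has vanishing cycles that are simple closed curves on $S^2$. All of these are null-homotopic, hence bound disks, so each singular fiber contains a sphere of square $-1$. Blowing these down (after isotoping the vanishing cycle off the capping disks, or noting it bounds a disk in $S^2$) yields a smooth $S^2$-bundle over $D^2$, i.e.\ $S^2\times D^2$. Thus $Z\cong S^2\times D^2\# n\,\cptwobar$, where $n$ is the number of Dehn twists in the factorization; this is the standard argument for planar Lefschetz fibrations.

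The main obstacle is the careful transfer of \cite{HRW}'s general statements. Their results are phrased for spinal open books with $SMod$ and admissibility conditions that include vertebra data, so the work lies in matching their admissibility with the simple combinatorial condition on transpositions. One must also check that the filling of exotic fibers by multisection disks is compatible with the capping, so that the closed-up fibration really has sphere fibers. I would handle this by working locally in the model $\C^2\setminus\{y^2=x\}\to\C$, where adding back the branched disk is explicit.
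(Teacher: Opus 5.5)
Your proposal is correct and follows essentially the same route as the paper, which presents this statement as a specialization of \cite{HRW}. The paper's argument runs as follows: multilink open books are spinal open books uniform with respect to the disk and have disk vertebrae; minimal strong fillings are then nearly Lefschetz fibrations, i.e.\ complements of multisections whose components are the vertebrae; and Riemann--Hurwitz forces $m_i-1$ boundary interchanges per binding component. Your tree-factorization argument for ``at most one interchange per pair'' and your blow-down identification of the sphere-fibered Lefschetz fibration with $S^2\times D^2\# n\cptwobar$ make explicit steps the paper leaves implicit. One small point: the multisection components are disks directly because the spine components are solid tori, so in the geometric half the Riemann--Hurwitz count is a consequence rather than an input.
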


\begin{cor}\label{cor:nearlyL-disk} 
Suppose that the multilink open book has a binding component of multiplicity 1, that is, the spinal monodromy fixes the corresponding component of $\d P$ pointwise. In this case, each minimal strong symplectic filling arises as the complement of a multisection in a Lefschetz fibration
$D^2 \times D^2 \# n \cptwobar  \to D^2$ for some $n\geq 0$. 
\end{cor}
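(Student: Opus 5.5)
We deduce the corollary from Theorem~\ref{thm:nearlyL-sphere}. That theorem shows any minimal strong filling $W$ is the complement of a multisection $\Sigma$ in a Lefschetz fibration $\pi\colon Z = S^2\times D^2\,\#\, n\cptwobar \to D^2$. The components of $\Sigma$ are disks, one for each multilink binding component, with degree equal to the multiplicity. Let $\Sigma_0$ be the component for the binding component of multiplicity~1. Then $\Sigma_0$ is a degree-one multisection, that is, an honest section of $\pi$ with no branch points. Equivalently, in the admissible factorization no boundary interchange involves the boundary component $\partial_0 P$ fixed by $\sigma$. We will show that removing a tubular neighborhood of this section turns the $S^2$-fibered total space into a $D^2$-fibered one, namely $D^2\times D^2\,\#\, n\cptwobar$, with the remaining components of $\Sigma$ removed.

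The steps are as follows. First, using the factorization description, write the monodromy as a product of positive Dehn twists and boundary interchanges on $P$. Capping all boundary components of $P$ gives the closed fiber $S^2$ of $Z$. We then cap all components except $\partial_0 P$, obtaining the disk $P_0$. The Dehn twists and the half-twist interchange arcs all avoid $\partial_0 P$, so they define a Lefschetz fibration $Z_0\to D^2$ with fiber $P_0 = D^2$. The remaining components of $\Sigma$ form a multisection of $Z_0$, since their branch points come from interchanges away from $\partial_0P$, and $W = Z_0\setminus \nu(\Sigma\setminus\Sigma_0)$. Second, identify $Z_0$. A Lefschetz fibration over $D^2$ with disk fiber and $n$ vanishing cycles in $D^2$ is built from $D^2\times D^2$ by attaching $n$ two-handles along the vanishing cycles with fiber framing $-1$. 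Every simple closed curve in $D^2$ bounds a disk in the fiber, so each handle is attached along a $(-1)$-framed unknot in a page. After isotoping the attaching circles apart (they lie in distinct pages, and a curve bounding a disk can be shrunk into a small disk disjoint from the others), these handles form unlinked $(-1)$-framed unknots. Hence $Z_0 \cong D^2\times D^2\,\#\, n\cptwobar$. Third, check compatibility: $Z_0$ is exactly $Z\setminus\nu(\Sigma_0)$. Removing a neighborhood of a section from $S^2\times D^2\,\#\, n\cptwobar$ leaves the fibration with fiber $S^2\setminus D^2$, consistent with Step~1, and the value of $n$ agrees. The multisection structure on the complement is inherited from Theorem~\ref{thm:nearlyL-sphere}.

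The main obstacle is Step~2, in particular the isotopy argument showing that the vanishing cycles, each null-homotopic in a disk fiber, give a split link of unknots. An alternative approach is to note that each vanishing cycle bounds a disk in the fiber whose union with the Lefschetz thimble is a sphere of square $-1$ disjoint from the others. This gives an orthogonal $\langle -1\rangle^n$ summand, and blowing down these spheres yields a fibration with no singular fibers, so the result is $D^2\times D^2$. Either route gives the corollary.
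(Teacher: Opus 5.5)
Your proposal is correct and follows the route the paper intends. The paper gives no written proof and presents this corollary as an immediate consequence of Theorem~\ref{thm:nearlyL-sphere}: the multiplicity-one binding component corresponds to a degree-one, unbranched multisection component, i.e.\ an honest section, and deleting its neighborhood turns the $S^2$-fibered blowup of $S^2\times D^2$ into a disk-fibered blowup of $D^2\times D^2$ that still contains the rest of the multisection. Your Steps 1--3 supply the details the paper leaves implicit: no interchange touches $\partial_0 P$, and a disk-fibered Lefschetz fibration is $D^2\times D^2$ with $(-1)$-framed handles attached along an unlink, or equivalently a blowup at the $(-1)$-sphere components of its singular fibers.
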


\begin{remark} For the nearly Lefschetz fibrations in the above statements, the vanishing cycles are essential curves in the fiber, although the multisections live in non-minimal Lefschetz fibrations on a blowup of $S^2 \times D^2$ (resp. $D^2 \times D^2$), where the vanishing cycles are contractible in the fiber.
\end{remark}

We can use the above description to read off the basic topological invariants of fillings, 
in particular their intersection form, from the nearly Lefschetz fibration structure. It will be convenient to consider two cases: the general 
case as in Theorem~\ref{thm:nearlyL-sphere} as well as a special case as in Corollary~\ref{cor:nearlyL-disk}.  

Consider a nearly Lefschetz fibration $\pi:W\to D^2$ with planar fiber $P$, such that the orientations of 
$W$, $ D^2$ and $P$ are compatible.
Let $D^{2}_e\subset D^2$ be a smaller disk whose preimage contains every exotic fiber but no singular fibers. Set $W_0=\pi^{-1}(D_e)$, then $W$ is obtained from $W_0$ by attaching 2-handles along the vanishing cycles. 

By Mayer--Vietoris,  $H_2(W_0)=0$. In presence of a multiplicity 1 component as in
Corollary~\ref{cor:nearlyL-disk}, $W_0$ is a complement of a multisection in $D^2 \times D^2 \to D^2$. Then $H_1(W_0)$ is generated by the meridians of the connected components of the multisection. We have
\begin{equation}\label{eq:orbits-equiv}
H_1(W_0) = H_1(P)/\sim,
\end{equation}
where $P$ is the planar fiber (a disk with holes), so that the generators of $H_1(P)$ are the boundaries of the holes, and the equivalence relation relates the holes that are in the same orbit under the monodromy permutation.  
Note that the outer boundary component of the disk is excluded from consideration. 
In this case, $H_1(W_0)$ is a free abelian group. 
In the general 
case, $W_0$ is a complement of a multisection in $S^2 \times D^2 \to D^2$. Then the  meridians $\mu_i$ of  the components of the multisection generate $H_1(W_0)$ as before, but these are subject to a relation $\sum m_i \mu_i=0$ because of the sphere fiber,
so $H_1(W_0)$ is no longer free abelian in general.  (Here, $m_i$ is the degree of the multisection component with meridian $\mu_i$.) 
Next, we use the long exact sequence of the pair $(W, W_0)$: 

\begin{equation}\label{eq:LES}
	0\to H_2(W)\xrightarrow{\iota} H_2(W, W_0)\xrightarrow{\partial_*} H_1(W_0)\to H_1(W)\to 0.
\end{equation}

Since $H_2(W, W_0)$ is freely generated by the vanishing cycles $\alpha_i$ of $W$,
we conclude that $H_2(W)$ is given by linear combinations of vanishing cycles that 
are nullhomologous in $H_1(W_0)$. An element  $[x]=H_2(W)$,  
$[x]=\sum a_i[\alpha_i]$ can be represented by an oriented embedded surface in $W$, 
following the construction of~\cite[Section 2]{ghiggini2021surface} with some adjustments. We sketch the idea here, referring the reader to \cite{ghiggini2021surface} for a detailed argument and pictures.

First, assume that $\sum a_i[\alpha_i]$ is nullhomologous in the sphere with holes  $P$ rather than in $W_0$. Fix a point at infinity away from the holes in the complement of the curves $\alpha_i$, and orient all of them so that each has winding number $-1$ around $\infty$. With this orientation, 
each $\alpha_i$ is oriented as the boundary of the region in $P$ disjoint from $\infty$ and has winding number $0$ or $1$ around each hole. If the linear combination  $\sum a_i[\alpha_i]$ were nullhomologous in the disk with holes away from $\infty$, then the total winding number $\sum a_i w_{\alpha_i}$ would be zero around each hole. Since the nullhomology 
only holds in $P$, we get that the winding number  $\sum a_i w_{\alpha_i}$ must give the same value around each hole. Set 
\begin{equation} \label{eq:winding}
 w = \sum a_i w_{\alpha_i} (\text{any hole}),
\end{equation}
and let $\beta$ be a small circle around $\infty$; in other words, $\beta$ is the boundary (with the reversed orientation) of a large disk $B$ that contains all the holes 
but not $\infty$. Then in $B$, the linear combination   $\sum a_i[\alpha_i]+ w [\beta]$ has winding number zero around each hole. We can form an embedded oriented surface in $W$, with boundary in the homology class 
$\sum a_i[\alpha_i]+ w [\beta]$, as follows. In distinct fibers, consider the   regions (with holes) 
cut out by $\alpha_i$ in the disk $B$ with holes (away from $\infty$), and take the union of these regions. 
Then close up the holes by tubes, via arranging 
holes into canceling pairs guided by their contributions into the total winding number. Finally, to get a closed surface, cap off the curve $\beta$ by the small disk in $P$ containing  $\infty$, and $\alpha_i$'s by the Lefschetz thimbles. 

More generally, when $[x]$ is represented by a linear combination
$\sum a_i[\alpha_i]$ nullhomologous in $H_1(W_0)= H_1(P)/\sim$,  recall that the boundaries of two holes in $P$ are in the same equivalence class if and only if they are both meridians of the same connected component of the multisection. Such meridians can be connected by tubes corresponding to the vanishing arcs; note that this respects the orientation of the boundaries of the holes. We incorporate these tubes into the construction of the closed surface for $[x]$.

Let $[x]=\sum a_i\alpha_i$, $[x']=\sum a_i'\alpha_i$ be two homology classes in $H_2(W)$ expressed as linear combinations of the vanishing cycles as above, and assume that all curves $\alpha_i$ intersect transversely. Then the embedded surfaces we construct for $[x]$ and $[x']$ intersect at the critical points corresponding to the vanishing cycles that enter non-trivially 
into both $[x]$ and $[x']$. (The caps are disjoint if the vanishing cycles are disjoint, even if the curves are homologous.) Additionally, the two surfaces intersect at points that correspond to the intersections of the curves $\alpha_i$, $\alpha_j$ in $P$, but these contributions add up to zero since intersection points of two simple closed curves on a planar surface come in canceling pairs.
Therefore, as in 
\cite[Proposition 2.1]{ghiggini2021surface}, 
we have 
\begin{equation}\label{eq:intersect}
[x] \cdot [x']=-\sum a_i a_i'  \quad \text{ for } [x]=\sum a_i\alpha_i, \  [x']=\sum a_i'\alpha_i.
\end{equation}
This gives 
\begin{prop} \label{prop:neg-def}
Every strong symplectic filling of a planar multilink contact 3-manifold is negative definite.
\end{prop}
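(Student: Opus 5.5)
The plan is to reduce to minimal fillings and then read off the intersection form from the nearly Lefschetz fibration structure, using formula~\eqref{eq:intersect}.

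\textbf{Step 1: reduction to minimal fillings.} Let $W$ be a strong symplectic filling of $(Y,\xi)$ supported by a planar multilink open book $(P,\phi)$. If $W$ is not minimal, blow down $(-1)$-spheres to obtain a minimal filling $W'$, so that $W\cong W'\# k\,\cptwobar$. Blowing up adds a $\langle -1\rangle$ summand to the intersection form. Hence it suffices to show that $W'$ is negative definite.

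\textbf{Step 2: structure of the minimal filling.} By Theorem~\ref{thm:nearlyL-sphere}, $W'$ carries a nearly Lefschetz fibration over $D^2$ with fiber $P$. Take $W_0=\pi^{-1}(D^2_e)$ as above. Then $H_2(W_0)=0$, and the exact sequence~\eqref{eq:LES} embeds $H_2(W')$ via $\iota$ into $H_2(W',W_0)\cong\Z^N$. The latter group is freely generated by the classes of the vanishing cycles $\alpha_1,\dots,\alpha_N$. Consequently every class $[x]\in H_2(W')$ is written uniquely as $\sum a_i\alpha_i$. Uniqueness holds because $\iota$ is injective, using that $H_2(W_0)=0$ from Mayer--Vietoris.

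\textbf{Step 3: the intersection form.} For such $[x]$, formula~\eqref{eq:intersect} gives
\[
[x]\cdot[x]=-\sum a_i^2 .
\]
This is strictly negative unless all $a_i=0$, i.e.\ unless $[x]=0$. Torsion classes pair trivially and can be ignored, since the intersection form lives on $H_2/\mathrm{Tors}$. Hence $Q_{W'}$ is negative definite. As a byproduct, the map $[x]\mapsto(a_i)$ is a lattice embedding of $(H_2(W')/\mathrm{Tors},Q_{W'})$ into $-\Z^N$, which gives the diagonal-lattice embedding mentioned in the introduction. Combining with Step 1, the filling $W$ is negative definite.

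\textbf{Main obstacle.} The real content is the validity of~\eqref{eq:intersect} in the multilink setting. Two points need care:
\begin{itemize}
\item[(i)] The surfaces built for $[x]$ and $[x']$, including the extra tubes along vanishing arcs that connect meridians of the same multisection component and the cap over $\beta$, must not create additional intersections.
\item[(ii)] The $w[\beta]$ correction terms must contribute nothing to the pairing.
\end{itemize}
I would handle this by pushing the tubes and the $\beta$-caps for $[x]$ and for $[x']$ into disjoint collar neighborhoods in distinct fibers. The only intersections that remain are then at the shared Lefschetz critical points, each contributing $-1$ per product $a_ia_i'$ since thimbles meet with sign $-1$, together with planar crossings of curves in $P$, which cancel in pairs.
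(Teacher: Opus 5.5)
Your proposal is correct and is essentially the paper's argument. You realize the minimal filling as a nearly Lefschetz fibration via Theorem~\ref{thm:nearlyL-sphere}, embed $H_2$ into the free group on vanishing cycles using \eqref{eq:LES} and $H_2(W_0)=0$, and read off $[x]^2=-\sum a_i^2$ from \eqref{eq:intersect}. Your explicit blow-down reduction to minimal fillings spells out a step the paper leaves implicit. The torsion remark is unnecessary, since $H_2(W')$ injects into a free abelian group.
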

The proposition generalizes a well-known property of planar contact 3-manifolds, first established in~\cite{etnyre2004planar}, see also~\cite{ozsvath2005planar}. These papers predate Wendl's work~\cite{We} describing fillings of planar contact $3$-manifolds as planar Lefschetz fibrations; given Wendl's result, the negative definite property is immediate from a direct calculation as in Proposition~\ref{prop:neg-def} and in~\cite[Proposition 2.1]{ghiggini2021surface}. 

Using Stipsicz's result~\cite{Stip2003geography} that $2 \chi + 3 \sigma$ is uniformly bounded below for all Stein fillings of a given contact 3-manifold, 
we recover the following corollary, 
established in a more general form in~\cite{lisi2021spine}:

\begin{cor} Given a planar multilink open book supporting a contact 3-manifold $(Y, \xi)$, all its Stein fillings have uniformly bounded Euler characteristic and signature.   
\end{cor}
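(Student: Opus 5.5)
The corollary follows by combining Proposition~\ref{prop:neg-def} and Theorem~\ref{thm:nearlyL-sphere} with Stipsicz's bound. First I reduce to minimal fillings. A Stein filling contains no symplectic $(-1)$-spheres, since it has no closed holomorphic curves (or, more crudely, since $H_2$ of a Stein domain carries no classes represented by embedded symplectic spheres). So every Stein filling $W$ of $(Y,\xi)$ is a minimal strong filling. By Theorem~\ref{thm:nearlyL-sphere}, $W$ is then a nearly Lefschetz fibration over $D^2$ with planar fiber $P$, encoded by an admissible positive factorization of $\phi\in SMod(P)$ with some number $N$ of Dehn twists. By Proposition~\ref{prop:neg-def}, $W$ is negative definite, so $\sigma(W)=-b_2(W)$. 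The task is therefore to bound $b_2(W)$, or equivalently $N$, and $b_1(W)$ uniformly.

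Next I compute $\chi(W)$ from the fibration structure. We have $W_0=\pi^{-1}(D_e)$, and $W$ is obtained from $W_0$ by attaching $N$ $2$-handles along the vanishing cycles. Hence $\chi(W)=\chi(W_0)+N$. The Euler characteristic $\chi(W_0)$ depends only on the boundary data. Indeed, $W_0$ is the complement of a multisection in $S^2\times D^2$, or equivalently it is built from $P\times D^2$ by the fixed number $\sum_i (m_i-1)$ of boundary interchanges dictated by admissibility. So $\chi(W_0)$ is determined by $\chi(P)$ and the multiplicities $m_i$, which are fixed by the open book. From the exact sequence~\eqref{eq:LES}, $b_1(W)\le b_1(W_0)$ is also uniformly bounded, and $b_2(W)\le N$. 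Thus $\chi(W)=\chi(W_0)+N$, with $N\ge b_2(W)$ and $b_2(W)=N-\operatorname{rk}(\partial_*\circ\iota\text{-image})$, where the correction term is at most $b_1(W_0)$. Both $\chi$ and $\sigma$ are therefore controlled once $b_2$ is.

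Now I apply Stipsicz's theorem~\cite{Stip2003geography}: there is a constant $c(Y,\xi)$ with $2\chi(W)+3\sigma(W)\ge c$ for all Stein fillings $W$. Write $\chi(W)=1-b_1(W)+b_2(W)$, where $b_3=0$ for a Stein domain, or alternatively use the handle count above. Together with $\sigma(W)=-b_2(W)$ this gives
\[
2-2b_1(W)+2b_2(W)-3b_2(W)\ge c ,
\]
so $b_2(W)\le 2-c$. Combined with the uniform bound on $b_1(W)$, this bounds $\chi(W)$ both above and below, and it bounds $|\sigma(W)|=b_2(W)$.

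The main obstacle is the first step: confirming that Stein fillings are covered by Theorem~\ref{thm:nearlyL-sphere}, which is stated for minimal strong fillings. I would handle this with the standard fact that Stein domains are minimal. If that fails in some setting, an alternative is to blow down; this changes $\sigma$ and $\chi$ in a controlled way only if the number of blow-downs is bounded, and that is not automatic, so minimality is the route I would rely on. The negative definiteness from Proposition~\ref{prop:neg-def} is what turns the one-sided inequality of Stipsicz into two-sided bounds.
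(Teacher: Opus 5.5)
Your argument is correct and is essentially the paper's: Proposition~\ref{prop:neg-def} already covers every strong filling, so your minimality detour is unnecessary, and it gives $\sigma(W)=-b_2(W)$; Stipsicz's lower bound $2\chi+3\sigma\ge c$ then yields $2b_1(W)+b_2(W)\le 2-c$, which bounds everything at once. Your separate handle count bounding $b_1$ is therefore superfluous but harmless, apart from a notational slip: $\partial_*\circ\iota=0$ by exactness, and what you mean is the rank of the image of $\partial_*$.
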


\begin{example}\label{ex:models} Consider a family of
surface singularities $X_n$ with the dual resolution graphs as in Figure~\ref{fig:model}. 
These singularities are sandwiched, since each graph blows down after changing the arrowhead to a $(-1)$-vertex on the left of Figure~\ref{fig:model}. The corresponding decorated curve is given by the image of the transverse disk on the $(-1)$-curve after the blowdown, decorated by $2n+1$. (Recall that the decoration on each component is an integer given by the sum of the multiplicities of intersection of the curve with the exceptional divisors in the blowdown.) Equivalently, it is not hard to see, by constructing the embedded resolutions,  that this sandwiched family can be represented by the curve germs  $y^n+x^{n+1}=0$ (with decoration $2n+1$).  The multiplicity sequence of $y^n+x^{n+1}=0$ is $(n,1,1,\dots)$, so the Scott deformation is given by a transverse $n$-fold intersection with  $n+1$ additional free marked points on the curve to match the decoration. (See \cite[Proposition 4.1]{PS1} or \cite[Proposition 1.10]{djvs} for discussion of Scott deformation.) 

From the Scott deformation, we can describe a Stein filling 
using methods from \cite[4.5]{PS2}: the fiber is a disk with $n$ holes corresponding to the branches of the curve $y^n+x^{n+1}=0$, the boundary interchanges are determined by the projection of the deformed curve thought of as a multisection,  the $n$-fold transverse intersection gives a vanishing cycle $\alpha_0$
parallel to the outer boundary, and the marked points give vanishing cycles $\alpha_1,\dots,\alpha_{n+1}$ around the boundaries of the holes. Note that the latter are homologous 1-cycles in $W_0$, see~\eqref{eq:orbits-equiv}. Up to equivalence, we can put one vanishing cycle around each hole, with the exception of the last hole that gets two vanishing cycles $\alpha_n$ and $\alpha_{n+1}$. Any pair $\alpha_i, \alpha_j$, $i, j>0$ around two different holes can be connected by a tube in $W_0$ through the holes in the page.   The cycles in  $H_2(W)$ forming the resolution graph can now be seen explicitly on the page, using~\eqref{eq:intersect} to recover the graph: the $(-n-1)$ vertex is represented by $\alpha_0-\alpha_1-\dots-\alpha_{n}$,  the long arm vertices 
by $\alpha_{k}-\alpha_{k+1}$ for $k=1,\dots,n-1$, and the $(-2)$ vertex adjacent to 
the $(-n-1)$ vertex by $\alpha_{n}-\alpha_{n+1}$. 

	\begin{figure}[ht!]
		\begin{minipage}{0.32\textwidth}
			\begin{tikzpicture}[scale=0.75, every node/.style={circle, fill=black, inner sep=1.1pt}]
				
				\node[style={fill=white}] (a1) at (-0.7, 0.7) {};
				\node[label=right:$-n-1$, style={draw=black, fill=white}] (s2) at (0,-1) {};
				\node[label=above:$-2$] (l1) at (0,0) {};	
				\node[label=above:$-2$] (l2) at (1,0) {};
				\node[style={fill=white}] (d) at (1.5,0) {};
				\node[style={fill=white}] (dd) at (2.5,0) {};
				\node[label=above:$-2$] (e) at (3,0) {};
				\node[draw, draw=white, fill=white] (asd) at (2,0) {$\dots$};
				\draw  (s2)--(l1) (l1)--(l2)--(d) (dd)--(e);
				\draw[->] (l1)->(a1);
			\end{tikzpicture}
		\end{minipage}
		\begin{minipage}{0.32\textwidth}
			\begin{tikzpicture}[scale=1]
				\draw (0,0) circle (50pt);
				\draw[color=red] (0,0) circle (45pt);
				\draw[color=green] (-1,0)--(1,0)
				node[midway, color=black, fill=white] {$\dots$};
				\draw[fill=white] (-1,0) circle (7pt);
				\draw[color=red] (-1,0) circle (14pt);
				\draw[fill=white] (1,0) circle (7pt);
				\draw[color=red] (1,0) circle (10.5pt);
				\draw[color=red] (1,0) circle (14pt);
				
			\end{tikzpicture}
		\end{minipage}
		\caption{Left: the surface singularities $X_n$, where the long arm has $n-1\geq1$ vertices of self-intersection $-2$,  not counting the node. Right: a factorization of their filling, with the vanishing cycles in red and the vanishing arcs in green. Note that only the last inner boundary component is encircled by two parallel vanishing cycles.}
		\label{fig:model}
	\end{figure}
\end{example}

\begin{example}\label{ex:d4}
	There is a double covering $\partial A_3\to\partial D_4$ induced by the inclusion $\Z_4\leq Q_8\leq SO(4)$ compatible with the classical planar open book of $A_3$ and the Milnor fillable contact structures on the plumbing graphs as singularity links. This means that there is a planar spinal open book decomposition of the singularity link, but it does not admit a multiplicity 1 boundary component of the page; see Figure~\ref{fig:D4spinal} and cf.\ Remark~\ref{rmk:D4}.
	This example shows that the assumption on the multiplicity in Theorem~\ref{thm:equivalence} is necessary.
		\begin{figure}[ht!]
			\centering
		\begin{minipage}{0.32\textwidth}
			\begin{tikzpicture}[scale=0.75, every node/.style={circle, fill=black, inner sep=1.1pt}]
				\node[label=above:$(1)$, style={color=white, fill=white}] (a1) at (-2,2) {};
				\node[label=above:$(1)$] (v1) at (-1,2) {};
				\node[label=above:$(1)$] (v2) at (0,2) {};
				\node[label=above:$(1)$] (v3) at (1,2) {};
				\node[label=above:$(1)$, style={color=white, fill=white}] (a2) at (2,2) {};
				\draw  (v1)--(v2)--(v3);
				\draw[->] (v1)--(a1);
				\draw[->] (v3)--(a2);
				\node[label=above:$(2)$] (c) at (0,-2) {};
				\node[label=below:$(1)$] (l1) at (0,-3) {};
				\node[label=below:$(2)$] (l2) at (-1,-1) {};
				\node[label=below:$(2)$, style={color=white, fill=white}] (a3) at (-2,0) {};
				\node[label=below:$(1)$] (l3) at (1,-1) {};
				\draw  (c)--(l1) (c)--(l2) (c)--(l3);
				\draw[->] (l2)--(a3);
			\end{tikzpicture}
		\end{minipage}\centering
		\begin{minipage}{0.1\textwidth}
			\begin{tikzpicture}[scale=1]
				\draw (0,2) circle (25pt);
				\draw (0,2) circle (2.5pt);
				\draw[color=red] (0,2) circle (5pt);
				\draw[color=red] (0,2) circle (10pt);
				\draw[color=red] (0,2) circle (15pt);
				\draw[color=red] (0,2) circle (20pt);
				\draw[color=green] (0,-1)--(-0.88,-1);
				\draw (0,-1) circle (25pt);
				\draw[fill=white] (0,-1) circle (2.5pt);
				\draw[color=red] (0,-1) circle (5pt);
				\draw[color=red] (0,-1) circle (10pt);
				\draw[color=red] (0,-1) circle (15pt);
				\draw[color=red] (0,-1) circle (20pt);
			\end{tikzpicture}
		\end{minipage}
		\caption{The $A_3$ and $D_4$ graphs with the multiplicities and the  strict transforms of the functions inducing the open books indicated.}
		\label{fig:D4spinal}
		\end{figure}
\end{example}

We need a generalization of another result  of~\cite{ghiggini2021surface}, namely the calculation of the first Chern class $c_1(J)$ for an almost complex structure $J$ compatible with the symplectic form $\omega$ on 
a symplectic filling $(W,\omega)$ of a planar multilink open book, given as a nearly Lefschetz fibration as above. We use the notation introduced earlier in this section: for a nearly Lefschetz fibration $W$ 
with the fiber $P$ and vanishing cycles $\alpha_i$, a homology class $[x]\in H_2(W)$ is given by a nullhomologous (in $W_0$) linear combination of vanishing cycles, $[x]= \sum a_i\alpha_i$. If the open book 
has a binding component of multiplicity 1, so that our nearly Lefschetz fibration is the complement of a multisection in a blowup of $D^2 \times D^2$, we have  $(c_1(J),x)=\sum a_i$ as 
in~\cite[Proposition 2.4]{ghiggini2021surface}, and the proof from~\cite{ghiggini2021surface} carries over verbatim. In general, for a nearly Lefschetz fibration whose fiber $P$ is a sphere with holes without a multiplicity 1 boundary component,  the evaluation $\langle c_1(J), [x]\rangle $ depends on the total winding number of the curves $\alpha_i$ around $\infty$, as defined earlier. Assuming that $\sum a_i[\alpha_i]=0$ in $H_1(W_0)$, the curves  $\alpha_i$ are oriented so that each has winding number $-1$ around $\infty$ and $\beta$ is a small circle around $\infty$, oriented to have  winding number $+1$, we write 
\begin{equation}\label{eq:x=sum}
[x]=w\beta+\sum a_i\alpha_i
\end{equation}
to encode the data for the class $[x]$. Equation~\eqref{eq:x=sum} can be viewed as notational shorthand (note that $[\beta]=0$ in $H_1(W_0)$), but it has a useful topological interpretation for the class $[x]\in H_2(W)$. 
Since $W$ is the complement of a multisection in the Lefschetz fibration on
 $S^2\times D^2\#n\overline{\C P^2}$, we can consider the class $i_*[x]$, where $i: W \to S^2\times D^2\#n\overline{\C P^2}$ is the inclusion. The homology $H_2(S^2\times D^2\#n\overline{\C P^2})$ is generated by the classes given by the regular fiber $S^2$ and the exceptional spheres $e_1, \dots, e_n$.  It is not hard to see that by construction, \eqref{eq:x=sum}~means that $i_*[x]$ is homologous to the class 
 $w[S^2]+ \sum a_i e_i$ in $H_2(S^2\times D^2\#n\overline{\C P^2})$. It is now easy to evaluate $c_1(J)$ on $[x]$ by naturality:
 $$
 \langle c_1(W), [x] \rangle=\langle c_1(S^2\times D^2\#n\overline{\C P^2}), i_*[x] \rangle=2w+\sum_i a_i.
 $$
This gives 
\begin{prop} 
		Let $(Y,\xi)$ be a contact 3-manifold admitting a planar multilink open book $(P,\phi)$.
	Let $(W,\omega)$ be a symplectic filling of $(Y,\xi)$ given by a nearly Lefschetz fibration with the fiber $P$ and oriented vanishing cycles $\alpha_i$.  Let $[x]= \sum a_i\alpha_i$ be given as a nullhomologous (in $W_0$) linear combination of vanishing cycles as above.
	Then for an almost complex structure $J$ compatible with $\omega$, 
	\begin{equation}\label{eq:c1} 
		\langle c_1(J), [x] \rangle =2w+\sum a_i \quad \text{ for } [x]= \sum a_i\alpha_i,
	\end{equation}
	where $w = \sum a_i w_{\alpha_i}$ is the total winding number of   $\sum a_i\alpha_i$ around each of the holes in $P$, defined as in~\eqref{eq:winding}. \qed
\end{prop}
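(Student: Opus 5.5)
The plan is to compute $c_1$ after pushing everything forward into the closed-up Lefschetz fibration, where $c_1$ is known on every generator of $H_2$. By Theorem~\ref{thm:nearlyL-sphere}, $W$ is the complement of a multisection $M$ (a disjoint union of disks) in a Lefschetz fibration $Z=S^2\times D^2\#n\cptwobar\to D^2$. We may assume $W$ is minimal; otherwise we blow down and use that $c_1$ changes by the Poincaré dual of exceptional classes, which is compatible with the formula. The first step is to check that the almost complex structure $J$ on $W$ extends, up to homotopy, to an almost complex structure $J_Z$ on $Z$ compatible with the Lefschetz fibration. Concretely, $J$ is homotopic on $W$ to one for which the fibers are $J$-holomorphic. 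This is standard for (nearly) Lefschetz fibrations, since the vertical and horizontal splitting $TW=T^{v}\oplus \pi^*TD^2$ away from critical points determines $c_1$. Near the removed multisection, the vertical bundle of $W$ is the restriction of the vertical bundle of $Z$. Hence $c_1(J)=i^*c_1(J_Z)$, and by naturality $\langle c_1(J),[x]\rangle=\langle c_1(J_Z),i_*[x]\rangle$.

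The second step, the core of the argument, is to identify $i_*[x]=w[S^2]+\sum a_i e_i$. I would use the explicit surface $\Sigma_x$ constructed earlier in this section. It consists of the regions in distinct fibers bounded by the $\alpha_i$ inside the big disk $B$, tubes pairing holes (including tubes along the vanishing arcs between meridians of the same multisection component), the cap $\beta$ bounding the small disk around $\infty$, and the Lefschetz thimbles. Inside $Z$ the holes are filled in by the multisection, so the tubes can be isotoped away. Each vanishing cycle $\alpha_i$ bounds a disk in the fiber of $Z$ (on the side away from $\infty$), and together with its thimble it forms a sphere in the class $e_i$ (up to sign and the orientation convention). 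Each fiber region is oriented so that this sphere appears with coefficient $a_i$. The leftover chain consists of the $\beta$-disk, taken with coefficient $w$, glued to the complementary portions of the fibers. That leftover closes up to $w$ copies of the fiber $S^2$. I would verify this by intersecting with the multisection and with the exceptional classes. Pairing $\Sigma_x$ with a section disjoint from $M$ near $\infty$ detects the coefficient $w$. Pairing with $e_j$ gives $-a_j$ by \eqref{eq:intersect}, which matches $(\sum a_ie_i)\cdot e_j=-a_j$. Since $H_2(Z)$ is detected by these pairings, the identification follows.

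The final step is arithmetic. Since $c_1(Z)$ evaluates to $2$ on the fiber $S^2$ and to $1$ on each exceptional sphere, we get $2w+\sum a_i$. I expect the main obstacle to be the second step. The care needed is in fixing orientations: each $\alpha_i$ has winding number $-1$ about $\infty$, while $\beta$ has winding number $+1$. One must also confirm that the tubes along vanishing arcs, which pass through the exotic fibers, contribute nothing to the class in $Z$. I would handle this by noting that these tubes lie in a neighborhood of the multisection, and that neighborhood deformation retracts onto disks in $Z$.
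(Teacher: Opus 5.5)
You follow the paper's route: embed $W$ in $Z=S^2\times D^2\#n\cptwobar$, identify $i_*[x]$ with $w[S^2]+\sum a_ie_i$, and evaluate $c_1(Z)$ by naturality. You also justify two points the paper leaves implicit. First, $c_1(J)=i^*c_1(Z)$, via a $J$ compatible with the fibration. Second, the tubes are null-homologous, since they bound inside $\nu(M)$, which is a union of $4$-balls. The reduction to a minimal $W$ is unnecessary: any such nearly Lefschetz fibration is already the complement of a disk multisection in $Z$.

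\textbf{The sign of the fiber term.} The genuine problem is in the orientation step you flagged. With the stated conventions it produces the opposite sign for the fiber term.
\begin{itemize}
\item Let $R_i$ be the fiber-oriented region bounded by $\alpha_i$ away from $\infty$, and $\partial T_i=\alpha_i$. Degenerating these disks into the exceptional curve shows that $R_i\cup(-T_i)$ represents $+e_i$.
\item $\sum a_iR_i$ covers each hole $w$ times, and $\beta=-\partial B$. So the fiber chain with boundary $\sum a_i\alpha_i+w\beta$ that avoids the holes is $\sum a_iR_i-wB$.
\item Closing off $w\beta$ then forces the cap $-wD_\infty$, where $\partial D_\infty=\beta$. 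Hence $i_*[x]=\sum a_ie_i-w[S^2]$.
\item Equivalently, $\Sigma_x\cap M=\emptyset$ gives $0=i_*[x]\cdot[M_k]=\sum_i a_i\,\#(R_i\cap M_k)+c\,m_k$. So the $[S^2]$-coefficient is $c=-w$, and your pairing with the section through $\infty$ returns $-w$, not $w$.
\end{itemize}
Carried out correctly, your argument proves $\langle c_1,[x]\rangle=\sum a_i-2w$. This is \eqref{eq:c1} only if $w$ means the $[S^2]$-coefficient, i.e.\ $-\sum a_iw_{\alpha_i}$.

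The paper's one-line identification contains the same slip, and its later uses all rely on the fiber-coefficient reading. In Example~\ref{ex:d4}, $-\alpha_1-\alpha_2$ has winding number $-1$, yet needs $w=1$ to give $c_1=0$. In Corollary~\ref{cor:sympsphere}, the case ``$w=1$, all $a_i=-1$'' is impossible if $w=\sum a_iw_{\alpha_i}$ with $w_{\alpha_i}\in\{0,1\}$. The fix is to define $w$ as the pairing of $\Sigma_x$ with the section through $\infty$, which is exactly what your proposed check computes.

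\textbf{Smaller points.}
\begin{itemize}
\item $e_j$ meets $M$, so it is not a class in $W$, and \eqref{eq:intersect} does not give $\Sigma_x\cdot e_j$. Read the $e_j$-coefficients off your decomposition instead, or compute directly in $Z$.
\item When holes are permuted, the winding numbers around holes in one orbit can differ; in Example~\ref{ex:d4} they are $\sum a_i$ and $0$. Only the orbit average $\frac{1}{m_k}\sum_{h\in O_k}\sum_i a_iw_{\alpha_i}(h)$ is well defined, and that is the quantity the condition $\Sigma_x\cap M=\emptyset$ actually pins down.
\end{itemize}
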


%\begin{proof} This is exactly the same calculation as in, since in this case $W_0$ is the complement of a multisection in $D^2 \times D^2$, which allows to trivialize its tangent bundle and compute $c_1(J)$ as the obstruction to extending this trivialization over the 2-handles.   
%\end{proof}

\begin{example} Consider the nearly Lefschetz fibration for the resolution of the link of the $D_4$ singularity shown in~Figure~\ref{fig:D4spinal}. The vertices of the resolution graph are given as $-\alpha_1-\alpha_2,\ \alpha_1-\alpha_2,\ \alpha_2-\alpha_3,\ \alpha_3-\alpha_4$  using the boundary interchange with the outer boundary component. Using~\eqref{eq:c1}, we compute $c_1\equiv 0$, which is consistent with the vanishing of $c_1$ 
for the ADE singularities. Note that the contribution from the winding number is non-trivial for exactly one vertex. We will later see that this is true, in general, whenever we have a nearly Lefschetz fibration giving the minimal resolution of a link of surface singularity compatible with a planar multilink open book.
\end{example}

Together with adjunction, the formula for $c_1$ gives the following corollary.
\begin{cor}\label{cor:only-spheres}
	If a contact 3-manifold $(Y,\xi)$ admits a strong symplectic filling with a symplectic surface of positive genus, then it does not admit a planar multilink open book.
\end{cor}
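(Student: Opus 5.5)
We argue by contradiction. Suppose $(Y,\xi)$ admits a planar multilink open book $(P,\phi)$ and a strong symplectic filling $(W,\omega)$ containing a closed embedded symplectic surface $\Sigma$ of genus $g\geq 1$. First we reduce to minimal fillings. If $W$ contains an exceptional sphere, we blow it down symplectically. This can be done away from $\Sigma$ after a perturbation, or we keep track of the image of $\Sigma$, which stays a symplectic surface of the same genus, possibly immersed, and can then be smoothed while the genus only goes up. So we may assume $W$ is minimal. By Theorem~\ref{thm:nearlyL-sphere}, $W$ is then a nearly Lefschetz fibration with planar fiber $P$ and vanishing cycles $\alpha_i$. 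Every class in $H_2(W)$, in particular $[\Sigma]$, can be written as $[x]=\sum a_i\alpha_i$ with $\sum a_i\alpha_i$ nullhomologous in $W_0$. It carries the winding number $w$ from \eqref{eq:winding}.

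Next we combine the adjunction formula for the symplectic surface $\Sigma$,
\[
2g-2=[\Sigma]^2-\langle c_1(J),[\Sigma]\rangle,
\]
with \eqref{eq:intersect} and \eqref{eq:c1}. This gives
\[
2g-2=-\sum a_i^2-2w-\sum a_i=-\sum a_i(a_i+1)-2w .
\]
Since $a_i(a_i+1)\geq 0$ for every integer $a_i$, the first term is $\leq 0$. So the crux is to bound $w$ from below: we need $w\geq 0$, or more precisely $-2w-\sum a_i(a_i+1)\leq -2$ whenever $[\Sigma]\neq 0$. A nonzero symplectic class has positive area, so $[\Sigma]\neq 0$.

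The main obstacle is controlling the winding term $w$, which is new compared with the classical planar case. In the multiplicity-one case (Corollary~\ref{cor:nearlyL-disk}) the formula reads $\langle c_1,[x]\rangle=\sum a_i$ with no $w$ term, so the argument is exactly that of \cite{ghiggini2021surface}. There, $2g-2=-\sum a_i(a_i+1)\leq 0$ forces $g\leq 1$, and $g=1$ forces all $a_i\in\{0,-1\}$. The resulting class $-\sum_{i\in I}\alpha_i$ then has $c_1=-|I|$ and square $-|I|$, and it must be excluded. For this we use positivity of symplectic area against the Lefschetz thimbles, or note that such a class would pair negatively with $[\omega]$.

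In the general $S^2\times D^2\#n\cptwobar$ case, we exploit the interpretation $i_*[x]=w[S^2]+\sum a_ie_i$. The image $i_*\Sigma$ is a symplectic surface in a blowup of $S^2\times D^2$, and we can choose the symplectic structure so that the fibers are symplectic. Positivity of area against $\omega$, together with the fact that $W$ is disjoint from the multisection, should bound $w$. Concretely, we would pair $i_*[\Sigma]$ with the class of a multisection component $M$ of degree $m$. Since $\Sigma$ avoids $M$, we get $0=i_*[\Sigma]\cdot[M]=wm+\sum a_i(e_i\cdot M)$, which expresses $w$ through the $a_i$. Substituting this back into the adjunction identity, we expect to obtain $2g-2\leq 0$, and, with equality ruled out as in the multiplicity-one case, $g=0$. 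This contradiction proves the corollary. If the multisection pairing does not give a clean inequality, the fallback is a direct comparison: apply the adjunction inequality in the closed-up manifold $S^2\times D^2\#n\cptwobar$, where, after capping, positive-genus symplectic surfaces are ruled out by the standard classification of ruled surfaces (McDuff).
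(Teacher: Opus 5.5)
Your setup is the paper's: write $[\Sigma]=\sum a_i\alpha_i$, combine adjunction with \eqref{eq:intersect} and \eqref{eq:c1}, and note that everything hinges on a lower bound for $w$. You never establish that bound, so the one nontrivial step is missing.

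Pairing $i_*[\Sigma]=w[S^2]+\sum a_ie_i$ with a multisection component $M_j$ of degree $m_j$ gives $0=w\,m_j+\sum_i a_i\,(e_i\cdot M_j)$. But this identity holds for every class in $H_2(W)$, symplectic or not. It yields an inequality only once you know the sign and size of $e_i\cdot M_j$, which you never discuss, so ``we expect $2g-2\le 0$'' is not an argument.

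The idea can in fact be completed. Here $e_i$ is the thimble of $\alpha_i$ capped by the fiber region cut out by $\alpha_i$ away from $\infty$. So $e_i\cdot M_j=n_{ij}$, the number of holes of the $j$-th orbit enclosed by $\alpha_i$, and $0\le n_{ij}\le m_j$. Hence $\langle c_1,[x]\rangle=\sum_i a_i(1-2n_{ij}/m_j)$, and $|\langle c_1,[x]\rangle|\le\sum_i|a_i|\le\sum_i a_i^2=-[x]^2$. This gives $2g-2\le 0$ without the positivity $w\ge 0$ that the paper extracts from the symplectic $S^2$ fibers.

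Your fallback, by contrast, is false. A closed blowup of $S^2\times S^2$ contains symplectic surfaces of every genus (a smooth curve of bidegree $(2,g+1)$ has genus $g$), so the classification of ruled surfaces rules out nothing.

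The equality case $g=1$ is also mishandled. In the multiplicity-one case you exclude $[x]=-\sum_I\alpha_i$ by ``positivity of area against the thimbles'' or by asserting that it pairs negatively with $[\omega]$; neither is justified. The correct reason, used in the paper, is topological. Each essential vanishing cycle encloses at least one hole, so a combination with all coefficients $\le 0$ and $w=0$ has nonpositive winding numbers around the holes, not all zero. It is therefore not nullhomologous in $H_1(W_0)$, and such a class does not exist in $H_2(W)$ at all.

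In the general case, equality in the estimate above forces every nonzero $a_i$ to be $\pm 1$, with $\alpha_i$ enclosing all holes when $a_i=1$ and none when $a_i=-1$. Either way this contradicts essentiality, which is a different argument from ``as in the multiplicity-one case''. Your preliminary reduction to minimal fillings by blowing down is a reasonable addition that the paper leaves implicit, but it does not address these gaps.
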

\begin{proof}
Suppose that a nearly Lefschetz fibration filling $W$ as above 
contains a smoothly embedded symplectic surface in the homology class $[x] = \sum a_i\alpha_i$ as above, and 
$w$ is the winding number parameter, so that the surface is encoded by $w\beta+\sum a_i\alpha_i$ in the previous shorthand notation. Then the same surface gives  a symplectic class 
$\iota_*[x]= w[S^2]+ \sum a_i e_i$ in $H_2(S^2\times D^2\#n\overline{\C P^2})$, and since the fiber $S^2$ is symplectic, we conclude that $ w \geq 0$. From the adjunction formula $\langle c_1(\omega),[x] \rangle-[x]^2=2-2g$, we have 
	\begin{equation} \label{eq:adjunction}
		2 w + \sum a_i+\sum a_i^2=2-2g.
	\end{equation}
	Since $\sum a_i(1+a_i) \geq 0$, we must have $g=0$ if $w>0$. (This also forces $w=1$.) The case $w=0$ and $g=1$ is impossible since it would require $a_i=-1$ for all nonzero terms, and such a configuration of curves cannot be nullhomologous in $H_1(W_0)$ with total winding number $w=0$.
\end{proof}

We can get even more information on the representatives of the symplectic classes. 

\begin{cor} \label{cor:sympsphere}
	The only symplectic surfaces in a planar nearly Lefschetz fibration $W$ over $D^2$ given 
	by the complement of a multisection in a blowup of $S^2 \times D^2$ are 
	spheres. Their classes in $H_2(W)$ are expressed as nullhomologous linear combinations
	$$
	[x] = \alpha_j-\sum_I\alpha_i \text{ or } [x] = -\sum_I\alpha_i,
	$$
	 where at most one of the vanishing cycles enters with a $+$ sign, and the rest with a $-$ sign.
	 
	 In particular, these statements hold for every strong symplectic filling of a contact $3$-manifold admitting a planar multilink open book.  If there is a multilink component of multiplicity 1, then 
	 all symplectic sphere classes have the form $[x] = \alpha_j-\sum_I\alpha_i$.% (the second case is impossible).
\end{cor}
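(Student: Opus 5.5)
The argument refines the adjunction computation in the proof of Corollary~\ref{cor:only-spheres}. Let $\Sigma$ be a symplectic surface in $W$ of genus $g$, with class $[x]=\sum a_i\alpha_i$ and winding parameter $w$, so $[x]$ is encoded by $w\beta+\sum a_i\alpha_i$. As in that proof, $\iota_*[x]=w[S^2]+\sum a_i e_i$ is represented by a symplectic surface in the blowup of $S^2\times D^2$. Pairing with $\omega$ on the symplectic fiber $S^2$ then gives $w\ge 0$.

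Adjunction gives \eqref{eq:adjunction}:
$$2w+\sum a_i(a_i+1)=2-2g,$$
and every term on the left is nonnegative. Hence $g\le 1$, and the case $g=1$ was already excluded in Corollary~\ref{cor:only-spheres}. So $g=0$, and there are two cases.
\begin{itemize}
\item If $w=1$, then $a_i\in\{0,-1\}$ for all $i$.
\item If $w=0$, then exactly one coefficient satisfies $a_j(a_j+1)=2$, i.e. $a_j\in\{1,-2\}$, and all other $a_i\in\{0,-1\}$.
\end{itemize}

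Next we need the winding constraint. With the chosen orientations each $\alpha_i$ has winding number $0$ or $1$ around every hole, and by \eqref{eq:winding} the total $\sum a_iw_{\alpha_i}$ equals $w$ around every hole.

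In the case $w=0$ with $a_j=-2$, all coefficients are $\le 0$. Winding number $0$ around every hole then forces every $\alpha_i$ with $a_i\ne 0$ to enclose no hole. Such a curve is inessential in $P$, contradicting the fact (see the Remark after Corollary~\ref{cor:nearlyL-disk}) that vanishing cycles are essential. So $a_j=1$, and $[x]=\alpha_j-\sum_I\alpha_i$.

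In the case $w=1$, all coefficients are $0$ or $-1$, and $[x]=-\sum_I\alpha_i$. This is the second form; note that here $\iota_*[x]=[S^2]-\sum_I e_i$.

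The main point needing care is the case $w=0$, $a_j=-2$. I expect the argument above to dispose of it: a vanishing cycle that encloses no hole would also enclose no hole in the complement of $\infty$, hence would be nullhomotopic, which is excluded.

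The statement about fillings follows from Theorem~\ref{thm:nearlyL-sphere}. Every minimal strong filling is such a $W$. A non-minimal filling contains exceptional spheres, and these are themselves spheres; if needed, one reduces to the minimal case by blowing down, and I would only remark on this.

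For the multiplicity-$1$ case, Corollary~\ref{cor:nearlyL-disk} lets us take the outer boundary component to be the multiplicity-$1$ one. Then $W$ embeds in a blowup of $D^2\times D^2$, which has no fiber class, so $[\beta]$ contributes nothing. Equivalently, we may place $\infty$ in the outer boundary component, where the winding number around it is fixed at $0$, forcing $w=0$. The case $w=1$ therefore cannot occur, and only the form $\alpha_j-\sum_I\alpha_i$ remains.
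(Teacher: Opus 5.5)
Your proposal is correct and follows essentially the same route as the paper. Both arguments refine the adjunction identity $2w+\sum a_i(a_i+1)=2-2g$ with $w\ge 0$ into the case $w=1$, $a_i\in\{0,-1\}$ and the case $w=0$ with a single exceptional coefficient, and both use the multiplicity-$1$ component, with $\infty$ placed there, to force $w=0$.

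You also correctly rule out the root $a_j=-2$ of $a_j(a_j+1)=2$, which the paper's proof passes over. You do this with the same nonpositive-coefficient and essential-vanishing-cycle argument that excludes $g=1$ in Corollary~\ref{cor:only-spheres}. This step remains valid even though nullhomology in $H_1(W_0)$ only fixes the winding sums over each orbit of holes, not around each individual hole as you state; since every term is nonpositive, each individual winding number must still vanish.
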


\begin{proof}
	By the previous corollary, every symplectic surface is a sphere. Examining the adjunction 
	formula~\eqref{eq:adjunction} more closely, we see that
	$$
	2 w + \sum a_i+\sum a_i^2=2
	$$
	can hold only in two cases. The first case is $w=0$ and one $a_i=1$, with the other nonzero coefficients equal to $-1$, as claimed. The second case is $w=1$ and  $a_i=-1$ for all nonzero terms. This second scenario does not arise when there is a multiplicity 1 multilink component: in this case we have 
	the complement of a multisection in a blowup of $D^2 \times D^2$, and, after an appropriate identification of the fiber, must have $w=0$, as discussed earlier in this section (in other words, a linear combination of vanishing cycles with all negative coefficients cannot be nullhomologous).  
\end{proof}

	%The intersection form of $W$ embeds into the standard negative definite lattice, with generators  
	%$\alpha_i$ corresponding to the vanishing cycles. Specifically, the classe

	%Their homology classes are described in terms of nullhomologous linear combinations of vanishing cycles of the form 
	%$\alpha_j-\sum_I\alpha_i$ or $-\sum_I\alpha_i$, These formulas give an embedding 
	%of the intersection form of $W$ 's.

 %These formulas give an embedding 	of the intersection form of $W$ into the standard negative definite lattice generated by $\alpha_i$'s.

%For planar multilink open books having a binding component of multiplicity 1, 
%and a symplectic filling $(W, \omega)$ with a nearly Lefschetz fibration,  
%we can also compute $c_1(J)$ for the almost complex structure compatible with $\omega$. 
%We can also see the evaluation of $c_1$ the same way as in \cite[Proposition2.4] with the observation that the almost complex line bundle is trivial over the preimage of the exotic fibers since it is the complement of a multisection of the trivial Lefschetz fibration $D^2\times D^2$.

%\section{Heegaard-Floer homology and the contact invariant}
%Lorem ipsum dolor sit amet. Don't need this as a separate section.

\section{Sandwiched graphs and $s$-embeddings} \label{sec:s-embed}

In this section, we prove Theorem~\ref{thm:equivalence}. When a contact link $(Y, \xi)$ is
compatible with a planar multilink open book with a multiplicity 1 binding component, its minimal resolution 
gives a strong symplectic filling represented by   a nearly 
Lefschetz fibration, with the basis for its homology lattice given by symplectic spheres. 
Then, Corollary~\ref{cor:sympsphere} establishes the existence of a 
particular embedding (called $s$-embedding below) of this homology lattice into the standard diagonal lattice. In particular, an $s$-embedding exists for a sandwiched graph, since the link of a sandwiched singularity has a required planar multilink open book, and the minimal resolution gives its strong symplectic filling.   Conversely, we can adapt some methods of \cite{stipsicz2008rational}   to prove that if a dual resolution 
graph of an arbitrary singularity admits an $s$-embedding, then
it must be sandwiched.

\begin{definition}\label{def:sandwichchar}
	We say that a negative definite plumbing graph $\Gamma$ admits an \textit{$s$-embedding} if it is a tree, and there is an embedding of quadratic lattices $\phi:(\Z^{|\Gamma|}\langle v_1,\dots,v_{|\Gamma|}\rangle,Q_\Gamma)\hookrightarrow(\Z^{N}\langle e_1,\dots,e_N\rangle,\langle-1\rangle^N)$ such that $\phi(\sum_{\Gamma'}v_i)=e_{\gamma'}-\sum_{j\in I_{\Gamma'}}e_j$ where $\gamma'\not\in I_{\Gamma'}$ for every connected subgraph $\Gamma'\leq\Gamma$.
\end{definition}

%\begin{remark}
%	Note that any connected subgraph of an $s$-embeddable graph is also $s$-embeddable.
%\end{remark}

\begin{remark}\label{rmk:sandwich-embed}
	Any sandwiched graph admits an $s$-embedding by Lemma~\ref{lem:monodromygerm} and Corollary~\ref{cor:sympsphere}. It is  easy to construct an $s$-embedding directly from the graph, without appealing to symplectic topology. We use induction: the single vertex $(-1)$ graph admits an $s$-embedding via the embedding $v\mapsto e$. Given a graph $\Gamma$ admitting an $s$-embedding  $\phi:Q_\Gamma\to\langle -1\rangle^N$, a blowup along any vertex or edge is also planar via the blowup formula. If we blow up at a vertex $v$, we construct an embedding into $(\Z\langle e_1,\dots,e_N,e_*\rangle,\langle-1\rangle^{N+1})$ by subtracting $e_*$ from the image of $v$, mapping the new $-1$ vertex to $e_*$, and keeping the image of every other vertex unchanged.
	Similarly if we blow up along an edge $(u,w)$, we construct the new embedding by 
	subtracting $e_*$ from the image of both $u$ and $w$ and mapping the new $-1$ vertex to $e_*$.
	Thus every graph that blows down to the empty graph admits an $s$-embedding, sandwiched graphs are by definition subgraphs of such graphs, and by definition any subgraph of an $s$-embeddable graph is also $s$-embeddable.
\end{remark}
We begin with two simple observations:
\begin{lemma}[{\cite[Lemma 3.6]{stipsicz2008rational}}]
	If $v,w\in\Gamma$ are two different vertices, then $e_v\neq e_w$.
\end{lemma}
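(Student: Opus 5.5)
By Definition~\ref{def:sandwichchar}, each single vertex $v$ is a connected subgraph, so $\phi(v)=e_v-\sum_{j\in I_v}e_j$ with $e_v\notin I_v$. The lemma asserts that two distinct vertices cannot share this positive basis vector. I will argue by contradiction, applying the $s$-embedding condition to a path between them.

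Suppose $v\neq w$ and $e_v=e_w=:e$. Since $\Gamma$ is a tree, there is a unique path $v=u_0,u_1,\dots,u_k=w$ in $\Gamma$. The vertex set $\Gamma'=\{u_0,\dots,u_k\}$ is a connected subgraph, so the condition gives
\[
\phi\Big(\sum_{i=0}^k u_i\Big)=e_{\gamma'}-\sum_{j\in I_{\Gamma'}}e_j .
\]
In particular, every coordinate of this image is $-1$, $0$ or $1$, and at most one coordinate equals $+1$.

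Next I compute the $e$-coordinate of the left-hand side. Each vertex $u_i$ has image $e_{u_i}-\sum_{I_{u_i}}e_j$, so its $e$-coefficient is $+1$ if $e_{u_i}=e$, $-1$ if $e\in I_{u_i}$, and $0$ otherwise. The endpoints $u_0$ and $u_k$ each contribute $+1$. If there are no interior vertices contributing $-1$, the coefficient is at least $2$, which is an immediate contradiction.

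The main obstacle is handling interior vertices whose images contain $-e$. To deal with them, I pass to a minimal counterexample: among all pairs of distinct vertices sharing a positive vector, I choose one whose connecting path is shortest. I then examine the signed sequence $c_i\in\{+1,0,-1\}$ of $e$-coefficients along the path. The partial sums over sub-paths are again images of connected subgraphs, so by the definition each of them is at most $1$. It follows that $+1$'s and $-1$'s must alternate along the path, skipping zeros. Consequently, between the two $+1$ endpoints there is a $-1$ at some vertex $u_m$, and a further $+1$ at some vertex $u_p$ inside the path would contradict minimality, since $u_p$ would share $e$ with an endpoint along a strictly shorter path.

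This leaves the case where exactly one interior vertex $u_m$ has $e\in I_{u_m}$ and the total $e$-coefficient is $1$. Here I use the pairing instead. Consider the sub-path $u_0,\dots,u_m$. If $m>1$, the vertices $u_0$ and $u_m$ are not adjacent, so $Q(u_0,u_m)=0$. In the diagonal lattice, however, the common $e$ contributes $-(1)(-1)=+1$ to $\phi(u_0)\cdot\phi(u_m)$, and this must be cancelled by further shared vectors. I would then compare with $\phi(u_0+\dots+u_m)$, whose image must also be of the required form, and show that the cancellation forces a coefficient of $-2$ or a second $+1$ there. If this bookkeeping becomes unwieldy, I will fall back on the argument of \cite[Lemma 3.6]{stipsicz2008rational}, which handles exactly this adjacency and pairing analysis, and check that it only uses sums over connected subgraphs. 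Since our $s$-embedding condition is stronger than the hypotheses used there, that argument should transfer.
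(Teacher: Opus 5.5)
Your argument does not close. The decisive case, exactly one interior vertex $u_m$ with $e\in I_{u_m}$, is left as a plan (``I would then \dots\ show'') plus a fallback citation, and the possibility $m=1$ is not addressed. The step you do propose there does not by itself give a contradiction. Since $e\notin I_{u_0}$ and $e_{u_m}\neq e$, the lattice pairing $\phi(u_0)\cdot\phi(u_m)$ equals $1-|I_{u_0}\cap(I_{u_m}\setminus\{e\})|$, plus one more if $e_{u_m}\in I_{u_0}$. This vanishes as soon as $u_0$ and $u_m$ share one or two negative vectors $e_j$. The resulting coefficient $-2$ of $e_j$ in $\phi(u_0)+\phi(u_m)$ can then be brought back to $-1$ in $\phi(u_0+\dots+u_m)$ by an interior vertex whose positive vector is $e_j$, which is the same alternating pattern you derived for $e$. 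So the sub-path bookkeeping and the minimal-counterexample reduction need a further argument that you have not supplied.

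The missing idea is to pair the two offending vertices with each other; no path is needed. If $e_v=e_w=e$, then $e\notin I_v$ and $e\notin I_w$, so all cross terms vanish and
\[
Q_\Gamma(v,w)=\phi(v)\cdot\phi(w)=(e,e)+\Bigl(\sum_{i\in I_v}e_i,\sum_{j\in I_w}e_j\Bigr)=-1-|I_v\cap I_w|\le -1 .
\]
But distinct vertices of a plumbing tree have $Q_\Gamma(v,w)\in\{0,1\}$, a contradiction. This is the paper's one-line proof. It uses only the single-vertex case of the $s$-embedding condition and the fact that $\phi$ preserves the pairing. Within your own setup, pairing $u_0$ with $u_k$ instead of $u_0$ with $u_m$ would have finished the last case, including $m=1$, immediately.
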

\begin{proof}
	Suppose $e=e_v=e_w$, then $(v,w)=(e-\sum_V e_i,e-\sum_We_i)\leq -1$, a contradiction, since $(v,w)\in\{0,1\}$.
\end{proof}

\begin{lemma}\label{lem:nbrshared}
	If $v,w\in\Gamma$ are neighboring vertices with $\phi(v)=e_v-\sum_Ve_i,\ \phi(w)=e_w-\sum_We_i$ under the $s$-embedding $\phi$, we claim that $V\cap W=\emptyset$.
\end{lemma}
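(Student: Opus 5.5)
We use the defining property of an $s$-embedding on the connected subgraph $\Gamma'=\{v,w\}$. Since $v$ and $w$ are neighbors, $\{v,w\}$ is connected, so by Definition~\ref{def:sandwichchar} there is an index $\gamma'$ and a set $I'$ with $\gamma'\notin I'$ such that
\[
\phi(v)+\phi(w)=e_{\gamma'}-\sum_{j\in I'}e_j .
\]
In particular, every coefficient of $\phi(v)+\phi(w)$ in the basis $\{e_j\}$ lies in $\{-1,0,1\}$, and exactly one coefficient equals $+1$.

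Now compare this with the direct sum
\[
\phi(v)+\phi(w)=e_v+e_w-\sum_{i\in V}e_i-\sum_{i\in W}e_i .
\]
Suppose $j\in V\cap W$. Applying Definition~\ref{def:sandwichchar} to the single vertices $v$ and $w$ gives $e_v\notin V$ and $e_w\notin W$. By the previous lemma, $e_v\neq e_w$.

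We split into cases according to whether $e_j$ coincides with $e_v$ or $e_w$. If $e_j\neq e_v,e_w$, then the coefficient of $e_j$ in the sum is $-2$, which contradicts the fact that all coefficients lie in $\{-1,0,1\}$. If $e_j=e_w$, then $j\in W$ means $e_w\in W$, which contradicts $e_w\notin W$. The case $e_j=e_v$ is excluded in the same way, since $e_v\notin V$. Therefore $V\cap W=\emptyset$.

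The only subtle point is ensuring that the indices used in the case analysis are treated carefully. One could also argue through the pairing $(\phi(v),\phi(w))=1$, but the subgraph condition on $\{v,w\}$ gives the contradiction directly, so no further obstacle arises.
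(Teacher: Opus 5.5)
Your proof is correct, and it takes a more direct route than the paper's.

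The paper argues mainly through the pairing. Since $e_v\neq e_w$, the value $(\phi(v),\phi(w))$ equals the number of the conditions $v\in W$, $w\in V$ that hold, minus $|V\cap W|$. Requiring this to be $1$ forces $V\cap W=\emptyset$ except when both $w\in V$ and $v\in W$. Only in that remaining case, where $|V\cap W|=1$, does the paper invoke the $s$-embedding condition on the edge subgraph $\{v,w\}$ to obtain the forbidden coefficient $-2e_j$.

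You apply that subgraph condition uniformly instead. Any $j\in V\cap W$ is automatically different from $v$ and $w$, because $v\notin V$ and $w\notin W$. Hence $e_j$ has coefficient $-2$ in $\phi(v)+\phi(w)$, a contradiction. This bypasses the pairing entirely. It also makes your appeal to $e_v\neq e_w$, and your case split on whether $e_j$ equals $e_v$ or $e_w$, unnecessary.

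In exchange, the paper's pairing computation yields slightly more: for adjacent vertices, exactly one of $w\in V$, $v\in W$ holds. Your argument does not detect this, though the lemma as stated does not need it.
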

\begin{proof}
	The maximal intersection between vectors of this form is 2 if $w\in V$ and $v\in W$.
	In this case, $(v,w)=(\phi(v),\phi(w))=1$  implies that $V\cap W=\{j\}$, and the image sum of the connected subgraph $v$--$w$ contains $-2e_j$, a contradiction.
	If either $v\not\in W$ or $w\not\in V$, they cannot share negative basis elements since that would decrease the intersection of the embedding vectors to $\leq 0$.
\end{proof}

Since $\Gamma$ is a connected subtree of itself,
we see that $\sum_1^n\phi(v_i)=e_\gamma-\sum_{I_\Gamma}e_i$.
Expanding the terms of the sum we see that every $e_i$ can appear in the embeddings of the vertices 0, 1, 2 or 3  times with the following sign combinations:
\begin{enumerate}
	\item $+$
	\item $-$
	\item $+, -$
	\item $+, -, -$
\end{enumerate}
(We will refer to these as types 1--4. This corresponds to types 1,2,4,7 of \cite[Lemma 3.7]{stipsicz2008rational}). There is only one basis element of type 1, namely $e_\gamma$. For the rest, let us expand the sum $\sum_{i<j}(v_i,v_j)=|\Gamma|-1$. Since the $e_i$ are orthonormal, we can consider the basis element types separately. They contribute 0,0,1,1 respectively to the sum, 
i.e. there are $|\Gamma|-1$ basis elements of types 3 and 4.

Consider a type 2 basis element $e$. This means that there is a unique vertex $v\in\Gamma$ such that $\phi(v)=e_v-e-\sum_{V}e_i$ and $e$ does not appear in the image of any other vertex. Let us augment the graph $\Gamma$ by a leaf $l$ connected to $v$ and with framing $-1$, and extend the embedding by $\phi(l)=e$. 
\begin{prop}
	The extended graph $\tilde\Gamma$ constructed as above is negative definite and admits an $s$-embedding.
\end{prop}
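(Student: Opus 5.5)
Extend $\phi$ to $\tilde\Gamma$ by $\phi(l)=e$, and check three things in order: that $\tilde\phi$ is an isometric embedding of $(\Z^{|\Gamma|+1},Q_{\tilde\Gamma})$ into $\langle -1\rangle^N$; that $Q_{\tilde\Gamma}$ is negative definite; and that the image of every connected subgraph sum has the $s$-form.

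\emph{Isometry and injectivity.} We have $\tilde\phi(l)\cdot\tilde\phi(l)=e\cdot e=-1$, which is the framing of $l$. Next, $\tilde\phi(l)\cdot\phi(v)=e\cdot(e_v-e-\sum_V e_i)=1$, since $e\neq e_v$ and $e$ appears in $\phi(v)$ exactly once, with sign $-$. For $u\neq v$ we get $\tilde\phi(l)\cdot\phi(u)=0$, because $e$ is of type 2 and so occurs in no other image. These are exactly the entries of $Q_{\tilde\Gamma}$, so $\tilde\phi$ preserves the form.

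For injectivity, suppose $\sum c_u\phi(u)+c_l e=0$. Take the $e$-coefficient: it is $-c_v+c_l$, so $c_l=c_v$. Then $\sum c_u\phi(u)=-c_ve$. Pair both sides with $\phi(w)$ for each $w\in\Gamma$. Since $\phi(w)\cdot e=\delta_{wv}$, this gives $\sum_u c_uQ(u,w)=-c_v\delta_{wv}$, i.e.\ $Q_\Gamma c=-c_v\epsilon_v$. Now pair $\sum c_u\phi(u)=-c_ve$ with itself: the left side is $c^TQ_\Gamma c=-c_v^2$ and the right side is $c_v^2e\cdot e=-c_v^2$, so this gives no information. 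Instead, compute $c^TQ_\Gamma c=c\cdot(-c_v\epsilon_v)=-c_v^2$ and compare with the norm of $\sum c_u\phi(u)$ restricted to the coordinates other than $e$. Those coordinates of $\sum c_u\phi(u)$ are all zero, so the only contribution is from $e$, and this is again consistent. So the cleaner route is to deduce injectivity from negative definiteness.

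\emph{Negative definiteness.} Because $\tilde\phi$ is form-preserving into the negative definite diagonal lattice, $Q_{\tilde\Gamma}(x,x)=\langle\tilde\phi(x),\tilde\phi(x)\rangle\le0$, with equality only if $\tilde\phi(x)=0$. It therefore suffices to show $\tilde\phi(x)=0$ forces $x=0$. From the computation above, $\tilde\phi(x)=0$ gives $c_l=c_v$ and $Q_\Gamma c=-c_v\epsilon_v$. Pairing $\tilde\phi(x)$ with $\tilde\phi(x)$ then gives
\[
0=c^TQ_\Gamma c+2c_v c_l-c_l^2=-c_v^2+2c_v^2-c_v^2,
\]
which is again uninformative. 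So I will argue via determinants instead: $\det Q_{\tilde\Gamma}=-\det Q_\Gamma-\det Q_{\Gamma-v}$. By Sylvester's criterion it is enough to show $(-1)^{n+1}\det Q_{\tilde\Gamma}>0$. Equivalently, with $Q_\Gamma$ negative definite and $-1$ appended, we need $1+(Q_\Gamma^{-1})_{vv}>0$, that is, $|(Q_\Gamma^{-1})_{vv}|<1$.

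To get this, write $e=\sum_u c_u\phi(u)+r$ with $r$ orthogonal to $\operatorname{im}\phi$. Then $c=Q_\Gamma^{-1}\epsilon_v$, because $e\cdot\phi(w)=\delta_{wv}$. Hence $-1=e\cdot e=(Q_\Gamma^{-1})_{vv}+r\cdot r$. So $(Q_\Gamma^{-1})_{vv}\ge-1$, with equality exactly when $e\in\operatorname{im}\phi\otimes\mathbb Q$. This step is the main obstacle: I must rule out $e\in\operatorname{span}_{\mathbb Q}\phi(\Gamma)$. The plan is a counting argument using the $s$-structure. A rational relation $e=\sum c_u\phi(u)$ must cancel every type-1, type-3 and type-4 basis element.
\begin{itemize}
\item The type-1 element $e_\gamma$ appears only once, with sign $+$, so $c$ vanishes on the vertex $u_\gamma$ carrying it.
\item A type-3 element forces equal coefficients on its two carriers. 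Since there are $|\Gamma|-1$ elements of types 3 and 4, one for each edge of the tree (using Lemma~\ref{lem:nbrshared}), these relations propagate along edges.
\item So $c$ is constant on the tree, and hence identically zero by the type-1 step.
\end{itemize}
This would contradict $e\cdot e=-1$. Type-4 elements give relations of the form $c_a=c_b+c_c$ and need care; if propagation fails there, I will fall back on applying the connected-subgraph condition to the subtree supporting $c$.

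\emph{The $s$-condition.} Let $\Gamma'$ be a connected subgraph of $\tilde\Gamma$.
\begin{itemize}
\item If $l\notin\Gamma'$, the condition holds by hypothesis.
\item If $\Gamma'=\{l\}$, the image is $e$, which has the required form.
\item Otherwise $l\in\Gamma'$, so $v\in\Gamma'$ and $\Gamma'=\Gamma''\cup\{l\}$ with $\Gamma''$ connected and containing $v$. The image of $\Gamma''$ is $e_{\gamma''}-\sum_{I''}e_j$, and $e\in I''$ because $e$ occurs only in $\phi(v)$. Adding $e$ removes it from $I''$, leaving $e_{\gamma''}-\sum_{I''\setminus\{e\}}e_j$, with $\gamma''\notin I''\setminus\{e\}$.
\end{itemize}
Finally, $\tilde\Gamma$ is still a tree, so $\tilde\phi$ is an $s$-embedding.
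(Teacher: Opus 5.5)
\textbf{There is a genuine gap: the step that gives negative definiteness is not proved, and the sketch you offer for it is false.}

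Three parts of your proposal are correct:
\begin{itemize}
\item the check that $\tilde\phi$ preserves the form;
\item the connected-subgraph verification, which is exactly the paper's argument;
\item the Schur-complement identity $-1=(Q_\Gamma^{-1})_{vv}+r\cdot r$.
\end{itemize}
However, that identity only restates that $\tilde\phi$ is injective over $\mathbb Q$, i.e.\ that $e\notin\operatorname{span}_{\mathbb Q}\phi(\Gamma)$. You never prove this. Since an $s$-embedding must be injective, both halves of the proposition depend on it.

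Your sketch rests on a false claim: the $|\Gamma|-1$ elements of types 3 and 4 do not sit one per edge, and their relations need not make $c$ constant. Take the chain $b$--$a$--$c$ with framings $-3,-2,-3$ and the $s$-embedding $\phi(a)=e_a-e_1$, $\phi(b)=e_b-e_a-e_2$, $\phi(c)=e_c-e_a-e_b$. All six connected subgraph sums have the required form. In this example:
\begin{itemize}
\item both edges use the same element $e_a$, which is of type 4;
\item the type 3 element $e_b$ is shared by the non-adjacent vertices $b$ and $c$;
\item the type 3/4 equations $c_b=c_c$ and $c_a=c_b+c_c$ have the non-constant solution $(c_b,c_a,c_c)=(1,2,1)$.
\end{itemize}
Lemma~\ref{lem:nbrshared} only controls what adjacent vertices share. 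Your fallback (``apply the connected-subgraph condition to the subtree supporting $c$'') is not yet an argument.

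\textbf{The missing idea} is an induction on $|\Gamma|$ that uses only the type 1 element; this is how the paper proceeds. Suppose $\tilde\phi(F)=0$.
\begin{itemize}
\item The coefficient of $e_\gamma$ shows that $F$ vanishes on the vertex $u_\gamma$ carrying $e_\gamma$.
\item Removing $u_\gamma$ splits $\tilde\Gamma$ into mutually orthogonal pieces.
\item Each piece is either a connected subgraph of $\Gamma$, hence negative definite, or a smaller graph of the same augmented form. The latter is negative definite by induction, or is just $\{l\}$ if $u_\gamma=v$.
\item Hence $F=0$.
\end{itemize}

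In your own formulation the argument runs as follows. From $e=\sum c_u\phi(u)$ you get $c_{u_\gamma}=0$; if $u_\gamma=v$, this contradicts $c_v=-1$. Otherwise, let $C$ be a component of $\Gamma\setminus u_\gamma$ not containing $v$. Pairing with $\phi(w)$ for $w\in C$ gives $Q_Cc_C=0$, hence $c_C=0$. So $e\in\operatorname{span}_{\mathbb Q}\phi(C_v)$, where $C_v$ is the component containing $v$. Here $e$ is still of type 2 for the restricted $s$-embedding of $C_v$, so you recurse on a graph with fewer vertices.
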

\begin{proof}
	We need to check the subtrees which contain the new vertex $l$. Since it is a leaf, every connected subtree is obtained from a connected subtree of $\Gamma$ by the addition of $l$.
	For any such subtree $\Gamma'$ the corresponding basis element $e$ appears in its image if and only if $v\in\Gamma'$, since it is type 2. Adding the new vertex $l$ cancels out the basis element $e$ from the image, so it is still of the required form.
	
	For negative definiteness, we use induction on the number of vertices of $\Gamma$.
	The single vertex negative definite graphs admit $s$-embeddings, and they have a basis element of type 2 if and only if the vertex is framed $\leq -2$. The embedding is $e_1-e_2-\dots-e_n$, and by symmetry we can choose  $e_2$ as the image of the new $-1$ leaf. This lattice has Gram matrix $\begin{bmatrix}
		-n&1\\1&-1
	\end{bmatrix}$, which is negative definite if $n>1$.
	
	Consider now $\Gamma$ with $n$ vertices and a basis element of type 2. Since there is an embedding into a negative definite lattice, we know that the augmented graph $\tilde\Gamma$ must be negative semidefinite. If $F=\sum_{\tilde\Gamma} r_i v_i$ with $F^2=0$, then, expanding the $v_i$ under the embedding, we see that the coefficients of the $e_i$ have to sum to $0$ in each case. This means, in particular, that for the basis element of type 1 the coefficient is already 0.
	If we delete the vertex $v$ containing the type 1 basis element, the graph splits as a disjoint union of  graphs, each admitting an $s$-embedding, with one of these components augmented. By the inductive hypothesis,  these components are negative definite, thus the restriction of $F$ on them is zero, so  $F\equiv 0$.
\end{proof}
We may add new leaves for every basis element of type 2 
until we obtain a graph admitting an $s$-embedding with $|\Gamma|=N$. (By the discussion of the possible type combinations above, there must be an element of type 2 at each stage if the graph has fewer than $N$ vertices.) For this augmented graph, we know that there are no type 2 basis elements anymore; since we have $|\Gamma|=N$ number of vertices as well as basis elements; one of type 1, and $N-1$ of types 3 and 4. This new graph  has a lot of  $-1$ vertices; we will now blow them down, one by one. In the next lemma, we check that we never have to blow down a vertex of valency greater than 2, so the graph $\Gamma'$ is well defined.
\begin{prop}\label{prop:blowdown}
	Suppose that $\Gamma$ admits an $s$-embedding and $v\in\Gamma$ has $\phi(v)=e_v$. 
	Then blowing down the vertex $v$ gives a graph $\Gamma'$ that admits an $s$-embedding. If the embedding for $\Gamma$ has no type 2 vertices, then the same is true for $\Gamma'$.
\end{prop}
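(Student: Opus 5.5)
We will build the embedding for $\Gamma'$ directly from $\phi$ by deleting the basis vector $e_v$; this is the inverse of the blowup construction in Remark~\ref{rmk:sandwich-embed}. Since $\phi(v)=e_v$, we have $v^2=-1$. First we determine where $e_v$ occurs in the images of the other vertices. For a neighbor $w$ of $v$ we have $(\phi(v),\phi(w))=1$, so $e_v$ appears in $\phi(w)$ with coefficient $-1$. For a non-neighbor $u$ the pairing is $0$, so $e_v$ does not appear in $\phi(u)$. By the earlier lemma (cf.\ \cite[Lemma 3.6]{stipsicz2008rational}), $e_v$ cannot be the positive vector $e_u$ of any other vertex $u$.

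Next we bound the valency of $v$. In the type classification of the basis vector $e_v$ with respect to the sum over $\Gamma$, the vector occurs once with sign $+$ (in $\phi(v)$) and once with sign $-$ for each neighbor. Only types 1, 3 and 4 allow a single $+$, so $v$ has at most two neighbors. Hence the blowdown of $v$ is well defined:
\begin{itemize}[label={}]
\item if $v$ is a leaf with neighbor $w$, then $w^2$ increases by $1$;
\item if $v$ has two neighbors $u,w$, then both self-intersections increase by $1$ and $u$ and $w$ become adjacent.
\end{itemize}
The case of an isolated vertex is trivial. Lemma~\ref{lem:nbrshared} guarantees that the two neighbors $u,w$ were not adjacent in $\Gamma$, because they share the negative vector $e_v$. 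Therefore $\Gamma'$ is again a tree.

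Now define $\phi'$ on $\Gamma'$ by deleting the $e_v$ term, that is, $\phi'(x)=\phi(x)+e_v$ for each neighbor $x$ of $v$, with all other images unchanged, and view the result in $\langle-1\rangle^{N-1}$. We then check the pairings:
\begin{itemize}[label={}]
\item for a neighbor $w$, $\phi'(w)^2=\phi(w)^2+1$;
\item for two neighbors $u,w$, $(\phi'(u),\phi'(w))=(\phi(u),\phi(w))+1=0+1$;
\item all other pairings are unchanged.
\end{itemize}
So $\phi'$ realizes $Q_{\Gamma'}$. Injectivity follows because $\Gamma'$ is negative definite, being obtained from $\Gamma$ by a blowdown. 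Alternatively, it follows from the leading $e_x$ terms being distinct.

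The main step is the $s$-embedding condition for every connected subgraph $\Sigma'\subset\Gamma'$. If $\Sigma'$ contains no neighbor of $v$, it is a connected subgraph of $\Gamma$ with the same image. Otherwise we set $\Sigma=\Sigma'\cup\{v\}$. This is connected in $\Gamma$: if $\Sigma'$ contains both $u$ and $w$, then it is connected in $\Gamma$ via $v$, and if it contains only one of them, adjoining $v$ keeps it connected. We then compute
\[
\sum_{\Sigma'}\phi'(x)=\sum_{\Sigma}\phi(x)-e_v+k\,e_v,
\]
where $k\in\{1,2\}$ is the number of neighbors of $v$ lying in $\Sigma'$. When $k=1$, the right side is exactly $\sum_\Sigma\phi$, which has the required form $e_\gamma-\sum e_j$. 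When $k=2$, $e_v$ appears in $\sum_\Sigma\phi$ with coefficient $1-2=-1$, and adding it back removes it, so the form is preserved. In both cases $e_v$ disappears from the sum, so the result lies in the smaller lattice.

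For the final claim, about type 2 vectors, note that every $e_i$ with $i\ne v$ appears in $\phi'(x)$ with exactly the same signs and multiplicity as in $\phi(x)$. The types of the remaining basis vectors are therefore unchanged, and no type 2 vector is created. The most delicate point is the valency bound together with keeping track of the $k=2$ case; the rest is bookkeeping.
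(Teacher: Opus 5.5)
Your proof is correct and follows the paper's argument: the type 1/3/4 classification of $e_v$ bounds the valency of $v$ by two, and deleting $e_v$ (i.e.\ projecting onto $e_v^{\perp}$) preserves both the subgraph-sum form and the types of all other basis vectors; your passage to $\Sigma'\cup\{v\}$ spells out the subgraph check that the paper leaves implicit. The one loose point is the aside that injectivity follows from the leading terms being distinct, which is not a valid argument on its own, but your primary reason (negative definiteness of $\Gamma'$) suffices.
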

\begin{proof}
	$\Gamma'$ is connected and negative definite, since it is obtained via blowdown.
	The basis element $e_v$ may be of type 1, 3 or 4. In the first case the graph consists of a single vertex $v$ with framing $-1$, in the second $v$ is a $-1$ leaf, in the third $v$ has degree 2, and these are the only possibilities. This means that the blowdown results 
	in a graph $\Gamma'$, which is a tree. Algebraically, the blowdown is a projection to the orthogonal complement of $e_v$, so the subgraph sums also stay in the form required by the definition of a graph admitting an $s$-embedding. Similarly, this observation implies the statement about the type~2 vertices. 
\end{proof}
Let us blow down the augmented graph until we reach a minimal graph admitting an $s$-embedding. We  adapt another statement of Stipsicz-Szabó-Wahl to this simpler context and  provide the proof for completeness:
\begin{prop}[{\cite[Theorem 3.8]{stipsicz2008rational}}]\label{prop:minplanar}
	The only minimal graph admitting an $s$-embedding without basis elements of type 2 is the empty graph.
\end{prop}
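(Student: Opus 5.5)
The plan is to show that a nonempty graph with these properties must contain a vertex $v$ with $\phi(v)=e_v$, which contradicts minimality via Proposition~\ref{prop:blowdown}. Let $\Gamma$ be nonempty, minimal, and equipped with an $s$-embedding $\phi$ with no type 2 basis elements.

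First I would use the counting from the type analysis above. There is exactly one type 1 element, and there are $|\Gamma|-1$ elements of types 3 and 4 together. So the number of basis elements $N$ equals $|\Gamma|$. By \cite[Lemma 3.6]{stipsicz2008rational} (the first lemma of this section) the map $v\mapsto e_v$ is injective, so every basis element is $e_w$ for a unique vertex $w$. Each of types 1, 3, 4 has exactly one $+$ sign, so the $+$ occurrence of $e_w$ is in $\phi(w)$. Every other occurrence of $e_w$ is a $-$ sign in some $\phi(u)$ with $u\neq w$. Write $w\to u$ for such an occurrence. Minimality means every vertex has at least one incoming arrow, i.e.\ $\phi(v)\neq e_v$.

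Next I would extract local constraints from the pairing and from the subgraph condition of Definition~\ref{def:sandwichchar}.

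\emph{Adjacent vertices.} Suppose $v,w$ are adjacent. By Lemma~\ref{lem:nbrshared} they share no minus terms. Hence $(\phi(v),\phi(w))$ equals the number of arrows between $v$ and $w$, and this must be $1$. So every edge of the tree carries exactly one arrow.

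\emph{Non-adjacent vertices.} Suppose $u,u'$ are non-adjacent with $u\to u'$. That arrow contributes $+1$ to the pairing, so $u$ and $u'$ must share a minus term to cancel it.

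\emph{Type 4 elements.} Let $e_w$ have type 4, with arrows $w\to u$ and $w\to u'$. The coefficient of $e_w$ in the sum over any connected subgraph lies in $\{1,0,-1\}$, so every connected subgraph containing both $u$ and $u'$ must contain $w$. Thus $w$ lies on the tree path from $u$ to $u'$.

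\emph{Counting arrows.} The arrows coming from edges account for exactly $|\Gamma|-1$ of the $t_3+2t_4$ arrows in total. Minimality gives $t_3+2t_4\ge|\Gamma|=t_3+t_4+1$, so $t_4\ge 1$. Hence there are non-edge arrows, and each of them forces a shared minus term, i.e.\ a further type 4 element.

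The final step, which I expect to be the main obstacle, is turning these constraints into a contradiction. I would first try an extremal argument. Take a non-edge arrow $w\to u$ whose endpoints are at maximal tree distance. The shared minus element $e_x$ of $w$ and $u$ is of type 4, so by the path property $x$ lies on the path from $w$ to $u$. The arrows $x\to w$ and $x\to u$ cannot both be edge arrows, since $x$ would then be adjacent to both and $w\to u$ would close a cycle in the tree. So at least one of them is a non-edge arrow between vertices that are strictly closer together, and it again demands a shared minus element. Iterating this produces an infinite strictly decreasing chain of distances, which is impossible. If the extremal choice does not close up cleanly, my fallback is the subgraph-sum condition applied to the connected subgraph spanned by the path from $w$ to $u$. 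Its sum must contain exactly one $+1$ coefficient, and I would count which $e_y$ with $y$ on the path are cancelled inside the path, aiming to show that the non-edge arrow $w\to u$ makes this count inconsistent. Either route should show that every arrow is an edge arrow. Then $t_3+2t_4=|\Gamma|-1<|\Gamma|$, so some vertex has no incoming arrow, i.e.\ $\phi(v)=e_v$, contradicting minimality. Hence $\Gamma$ is empty.
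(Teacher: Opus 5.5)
\textbf{There is a genuine gap, in the final step.} Your preliminary bookkeeping is correct. With no type 2 elements, the vertices biject with the basis elements of types 1, 3, 4. Lemma~\ref{lem:nbrshared} forces exactly one arrow on each edge. A non-edge arrow forces a shared minus term, necessarily of type 4. The type 4 path property holds, and minimality gives $t_4\ge 1$.

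The final step rests on a claim that is false. The arrow $w\to u$ is not an edge of the tree, so nothing ``closes a cycle'' when $x$ is adjacent to both $w$ and $u$. Concretely, take the chain $a$--$c$--$b$ with framings $-3,-1,-2$ and $\phi(a)=e_1-e_2-e_3$, $\phi(c)=e_3$, $\phi(b)=e_2-e_3$. The connected subgraph sums are $e_1-e_2-e_3$, $e_3$, $e_2-e_3$, $e_1-e_2$, $e_2$, $e_1-e_3$, so this is an $s$-embedding. It has no type 2 element, since $e_1,e_2,e_3$ are of types 1, 3, 4. Yet $b\to a$ is a non-edge arrow at distance $2$. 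The shared minus term is $e_3=e_c$, with $c$ in the middle, and both $c\to a$ and $c\to b$ are edge arrows. So your descent halts at distance $2$ with no contradiction. Your fallback fails on the same example, because the path sum $e_1-e_3$ has exactly one positive coefficient.

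The underlying problem is that after deriving $t_4\ge 1$ you never use minimality again. Every constraint you invoke in the last step (one arrow per edge, shared minus terms, the path property, subgraph sums) is satisfied by the non-minimal example above, which does contain a non-edge arrow. Hence ``every arrow is an edge arrow'' cannot be deduced from those constraints. Minimality has to enter the contradiction itself; in the example, it is precisely the $(-1)$-vertex $c$, with no incoming arrow, that supports the configuration.

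The paper does this by induction on $|\Gamma|$, proving that every nonempty minimal $s$-embedded graph has a type 2 element:
\begin{itemize}
\item Choose a leaf $l$ with $e_l$ not of type 1.
\item $e_l$ is not of type 4, since $\Gamma\setminus l$ is connected and its sum would contain $-2e_l$.
\item If $e_l$ is of type 3 and $-e_l$ occurs in the neighbor $v$ of $l$, contract the edge $(v,l)$ to a single vertex with image $\phi(v)+\phi(l)$ and framing $v^2+l^2+2\le -2$.
\item If instead $-e_l$ occurs in some other vertex $w$, delete $l$ and replace $\phi(w)$ by $\phi(w)+e_l$. The result is still minimal, since $\phi(w)=e_w-e_l$ would force $(l,w)\ge 1$.
\end{itemize}
In both cases the smaller graph is minimal and $s$-embedded, and a type 2 element for it is also a type 2 element for $\Gamma$.
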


\begin{proof}
	We proceed by induction: for single vertex graphs, only $-1$ has no type 2 basis element, but it is not minimal.
	Consider a minimal graph $\Gamma$ with $n>1$ vertices, admitting an $s$-embedding. Let $l\mapsto e_l-\sum_{I_L}e_i$ be a leaf such that $e_l$ is not of type 1 (such a leaf exists since there is only one basis element of type 1, and a tree with $n>1$ vertices has at least two leaves).
	We note that $e_l$ cannot be of type 4, since this would mean that in the expansion of $\sum_{v\neq l}v$ we get a term $-2e_l$.
	
	Thus either  $e_l$ is of type 2, and we are done, or  $e_l$ must be of type 3. Let $v$ be the neighbor of $l$. If $-e_l$ appears in the expansion of $v$, then contract the edge $(v,l)$, and give this new vertex framing $(v+l)^2=v^2+l^2+2\leq -2$. We obtain a minimal graph admitting an $s$-embedding on $n-1$ vertices by mapping this new vertex to $\phi(v)+\phi(l)$. This graph  has a basis element of type 2 by induction and the types of the basis elements besides $e_l$ are unchanged by Lemma~\ref{lem:nbrshared}, so $\Gamma$ also admits a type 2 basis element.
	
	If $-e_l$ appears in another vertex $w$, then remove $l$, increase the framing of $w$ by $1$ and correspondingly change $\phi(w)$ by adding $e_l$ to it. We once again obtain a graph admitting an $s$-embedding on $n-1$ vertices. If it is not minimal, then $\phi(w)=e_w-e_l$ in $\Gamma$. This means that $(l,w)\geq 1$, a contradiction. Thus the new graph is minimal. Then it has a type 2 basis element by the induction hypothesis, so $\Gamma$ also has a type 2 basis element.
\end{proof}

To summarize the above argument, we started with a graph admitting an $s$-embedding, augmented it with $(-1)$ vertices in a certain way, and showed that the resulting augmented graph blows down to the empty graph. This proves the ``only if'' part of the following theorem.   The ``if'' part was already established in Corollary~\ref{cor:sympsphere} and Remark~\ref{rmk:sandwich-embed}.
\begin{theorem}\label{thm:sandwich}
	A negative definite plumbing graph admits an $s$-embedding if and only if it is sandwiched.
\end{theorem}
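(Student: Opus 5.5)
The two directions are assembled from results already established in this section, so the plan is mostly bookkeeping.

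For the ``if'' direction (sandwiched implies $s$-embeddable), I will invoke Remark~\ref{rmk:sandwich-embed}. A sandwiched $\Gamma$ is by definition a subgraph of some $\tilde\Gamma$ that blows down to the empty graph. Reversing the blowdown sequence, I build $\tilde\Gamma$ from the empty graph by successive blowups at vertices and edges, starting from the single $(-1)$ vertex with $v\mapsto e$. At each blowup I extend the embedding as in the remark: subtract $e_*$ from the image of the blown-up vertex (or from both endpoints of the blown-up edge) and send the new $(-1)$ vertex to $e_*$. I will check that subgraph sums keep the form $e_{\gamma'}-\sum e_j$ with $\gamma'\notin I_{\Gamma'}$, which requires a case analysis according to whether the connected subgraph contains the new vertex and one or both endpoints. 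Restricting to $\Gamma$ then gives the $s$-embedding, since restriction to a connected subtree preserves the defining property. Alternatively, I could cite Lemma~\ref{lem:monodromygerm} and Corollary~\ref{cor:sympsphere}: the minimal resolution is a filling of a planar multilink with a multiplicity $1$ component, and connected subgraph sums are represented by symplectic spheres after smoothing.

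For the ``only if'' direction, I start from an $s$-embedding of $\Gamma$ into $\langle -1\rangle^N$ and use the type analysis (types 1--4). While some basis element is of type 2, I attach a $(-1)$ leaf mapped to it; the preceding proposition shows the result is still negative definite and $s$-embeddable. The counting shows a type 2 element exists whenever $|\Gamma|<N$, since there is one type 1 element and exactly $|\Gamma|-1$ elements of types 3 and 4. So the process terminates at a graph $\tilde\Gamma\supset\Gamma$ with $N$ vertices and no type 2 elements, and by construction $\tilde\Gamma$ is obtained from $\Gamma$ by adding $(-1)$ leaves.

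Next I repeatedly blow down vertices with $\phi(v)=e_v$ using Proposition~\ref{prop:blowdown}. This preserves trees, $s$-embeddability, and the absence of type 2 elements, until the graph is minimal. By Proposition~\ref{prop:minplanar} the only such minimal graph is empty, so $\tilde\Gamma$ blows down to the empty graph and $\Gamma$ is sandwiched. The main point needing care is that every $(-1)$ vertex arising during the blowdown actually has image $e_v$, so that Proposition~\ref{prop:blowdown} applies. A $(-1)$ vertex with image $e_v-\sum e_i$ and nonempty sum would have square at most $-2$, so square $-1$ forces the image to be exactly $e_v$. Minimality in the sense of having no $(-1)$ vertices therefore coincides with the hypothesis of Proposition~\ref{prop:minplanar}, which closes the argument.
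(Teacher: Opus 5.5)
Your proposal is correct and follows the paper's proof. For ``if'' you use the blowup-formula induction of Remark~\ref{rmk:sandwich-embed} (or the symplectic route through Corollary~\ref{cor:sympsphere}); for ``only if'' you attach $(-1)$ leaves at type 2 basis elements, blow down via Proposition~\ref{prop:blowdown}, and conclude with Proposition~\ref{prop:minplanar}. One harmless quibble, which the paper shares: the count ``a type 2 element exists whenever $|\Gamma|<N$'' ignores basis vectors appearing in no image, but you only need the augmentation to stop with no type 2 elements, and it does, since each new leaf removes one type 2 element and creates none.
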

Together with Corollary~\ref{cor:sympsphere}, Theorem~\ref{thm:sandwich} proves Theorem~\ref{thm:equivalence}.

\begin{example}
	The theorem requires the embedding assumption for all subgraphs, it is not enough to require the vertices to have images of the form $e_*-\sum_I e_i$, as shown by the embedding of a triangle singularity:
	$$
		\begin{minipage}{0.32\textwidth}
		\begin{tikzpicture}[scale=0.75, every node/.style={circle, fill=black, inner sep=1.1pt}]
			\node[label=above:$-1$] (c) at (0,0) {};
			\node[label=below:$-3$] (l1) at (0,-1) {};
			\node[label=below:$-4$] (l2) at (-1,1) {};
			\node[label=below:$-4$] (l3) at (1,1) {};
			\draw  (c)--(l1) (c)--(l2) (c)--(l3);
		\end{tikzpicture}
	\end{minipage}
	\begin{bmatrix}
	1 & -1 & -1 & -1 \\
	0 & 1 & -1 & -1 \\
	0 & 0 & 1 & -1 \\
	0 & 0 & -1 & 1 \\
	0 & -1 & 0 & 0 \\
	\end{bmatrix}
	$$
	A simple checking of cases shows that this graph is not sandwiched (in fact the singularity is not even rational, it is minimally elliptic).
\end{example}

\section{Stein cobordisms between multilink open books}\label{sec:cobordisms}

In this section we prove Theorem
~\ref{thm:contactvanishing} as a corollary of Theorem~\ref{thm:contactvanishingstatement} below.

\begin{proof}[Proof of Theorem~\ref{thm:contactvanishing}] Recall that by~\cite{nemethi2017links},
the links of rational singularities are L-spaces in Heegaard Floer homology, that is, 
they have $HF^+_{red}=0$. In this case the conclusion of  
Theorem~\ref{thm:contactvanishing} is trivially satisfied: 
the contact invariant $c^+(\xi_{can})$ is in the image of the $U$-map on $HF^+(-Y)$. Functorial properties  of  the contact invariant (\cite{OSCont}) then imply the theorem for any contact 3-manifold that admits a Stein cobordism to the contact 
link of a rational singularity. 
\end{proof}

\begin{theorem}\label{thm:contactvanishingstatement}
	Every contact 3-manifold $(Y,\xi)$ admitting a planar multilink open book admits a Stein cobordism $W:(Y,\xi)\to(Y',\xi_{can})$ to the link  $(Y',\xi_{can})$ of a rational singularity.
\end{theorem}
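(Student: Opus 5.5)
We construct the cobordism monodromy-first. Stein cobordisms on the convex end of an open book $(P,\phi)$ are obtained in two ways: Legendrian surgery along a curve on a page, which multiplies the monodromy by a positive Dehn twist, and, when needed, enlarging the page by $1$-handles. The plan has three steps. First, find a model planar multilink open book $(P,\psi)$ that has the same page and the same boundary permutation $\sigma$ as $\phi$ and supports the link of a rational singularity. Second, show that $\psi$ is obtained from $\phi$ by composing with positive Dehn twists. Third, build the Stein cobordism $W$ by attaching Weinstein $2$-handles along Legendrian realizations of the twist curves on pages.

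\textbf{Reduction to a model monodromy.} Fix a planar multilink open book $(P,\phi)$ with binding multiplicities $m_1,\dots,m_k$. Choose an arbitrary lift $\tilde\phi_0\in SMod(P)$ of $\sigma$, for instance a product of standard boundary-interchange half-twists along a fixed system of arcs. Then $\phi=\tilde\phi_0\circ h$ for some $h\in Mod(P,\partial P)$. Every element of the planar mapping class group can be written as a product of Dehn twists $\tau_c^{\pm1}$. The negative twists can be cancelled by composing with enough positive twists. The cleanest way to do this is to multiply by high powers of boundary-parallel twists and by twists around curves enclosing sets of holes, using the lantern relation, until the total word is positive. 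Since the boundary-parallel twists are central in $Mod(P,\partial P)$, and they commute with the spinal model up to reindexing along $\sigma$-orbits, this shows that a composition of $\phi$ with positive twists equals $\tilde\phi_0\cdot \Delta$. Here $\Delta$ is a product of positive twists, and we may take it to contain arbitrarily many boundary-parallel twists around every hole and around the whole page.

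\textbf{Identifying the target as a rational singularity link.} Next, realize $\tilde\phi_0\cdot\Delta$ as the monodromy of a Milnor-type multilink. The natural source is Lemma~\ref{lem:monodromygerm} together with Remark~\ref{rmk:bdrytwists}. Pick a decorated plane curve germ $C=\cup C_i$ whose branches have multiplicities $m_i$ and whose braid monodromy induces $\sigma$ with the prescribed cycle structure; for example, take one branch $y^{m_i}=x^{q_i}$ per cycle with $\gcd(m_i,q_i)=1$, placed generically. The germ monodromy then differs from our word only by positive twists, which we absorb by adding further positive twists on the $\phi$ side. By Remark~\ref{rmk:bdrytwists}, raising the decorations adds arbitrarily many boundary twists around the holes. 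This yields a sandwiched, hence rational, singularity whose link is supported by the resulting open book.

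This argument needs a multiplicity-$1$ outer component. If every component of $\partial P$ has multiplicity $>1$, we would instead compare with models like Example~\ref{ex:d4}, or with a quotient or cover construction. Alternatively, we would pick the target graph directly from the nearly Lefschetz data and check its rationality by Laufer's algorithm. In this case the target is a tree of spheres obtained from Corollary~\ref{cor:sympsphere}-type embeddings with $w=1$ allowed.

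\textbf{Main obstacle.} The hard step is the second: producing, for an arbitrary cycle type of $\sigma$ and arbitrary $h$, a rational-singularity model whose monodromy dominates $\phi$ in the positive-twist order inside $SMod(P)$. The difficulty is sharpest when there is no multiplicity-$1$ component, since then no sandwiched model exists. Our first attempt will be to make the target as "positive" as possible, with all boundary and hole-enclosing twists raised to large powers. We will then verify rationality of the resulting plumbing graph by running Laufer's algorithm on a family like the graphs of Example~\ref{ex:models}, generalized to several arms with multiplicities $m_i$.
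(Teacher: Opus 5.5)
There is a genuine gap, and it is the step you yourself call the main obstacle. Nothing in the proposal shows that a singularity model dominates $\phi$ in the positive-twist order, and for pages without a multiplicity-$1$ boundary component you give no construction.

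\textbf{The multiplicity-$1$ case.} Making your word $\tilde\phi_0\cdot\Delta$ more positive works against you. You may only add positive twists to $\phi$, so the \emph{model} has to be the larger element. The sentence ``the germ monodromy differs from our word only by positive twists'' is exactly what must be proved.

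A route that works is to take the lift to be the germ monodromy $\Phi$ itself, so that $\phi\Phi^{-1}\in Mod(P,\partial P)$. Rewrite this element via the lantern relation as a product of twists about single holes and about pairs of holes, and cancel the negative twists. Then use the missing ingredient, the daisy relation: every positive twist about a pair of holes can be completed by further positive pair-twists to a product of the outer boundary twist $\tau_\partial$ and the hole twists $\delta_i$. This shows that $\phi$, composed with positive twists, equals $\Phi\,\tau_\partial^{r}\delta_1^{r}\cdots\delta_m^{r}$ for large $r$.

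The model must then absorb $\tau_\partial^r$, not only the $\delta_i^r$. Raising decorations (Remark~\ref{rmk:bdrytwists}) supplies only the $\delta_i$. Capping the holes to marked points and taking braid exponent sum, each $\delta_i$ contributes $0$. For a fixed permutation $\sigma$, however, the exponent sum of $\phi$ is unbounded, and positive twists never decrease it. So the germ itself must change. One way is to replace $C$ by a germ $C'$ whose strict transform under one blowup is $C$; this composes the monodromy with $\tau_\partial$.

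\textbf{No multiplicity-$1$ component.} Here the missing idea is a reduction to the previous case, not a new family of models. Covers or quotients as in Example~\ref{ex:d4}, or graphs read off from fillings of $(Y,\xi)$, are not worked out and do not come with a Stein cobordism from $(Y,\xi)$. The reduction goes as follows.
\begin{enumerate}
\item Isotope $\Psi$ on the sphere page $S$ to fix a small disk, and delete that disk. The resulting disk page $P$ has an outer boundary of multiplicity $1$.
\item The first case then gives a Stein cobordism from $(P,\Psi)$ to a sandwiched model $(P,\Phi')$.
\item Cap the outer boundary off again. The Weinstein handles are attached away from the binding, so they give a Stein cobordism from $(S,\Psi)$ to $(S,\Phi')$.
\end{enumerate}
Lemma~\ref{lem:planarspinalgraph} then identifies $(S,\Phi')$ as the Milnor open book of a holomorphic function on a rational singularity, so it supports $\xi_{can}$. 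Its graph is the sandwiched graph with the framing of $E_0$ raised by $1$. Laufer's algorithm still succeeds because $E_0$ has coefficient $1$ in $Z_{min}$, so the computation sequence starting at $E_0$ never involves $E_0\cdot E_0$.

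Your remark that no sandwiched model exists is correct for the page $S$ itself, but not after a fixed disk has been removed.
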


We will construct a family of certain  model rational singularities with explicit planar multilink open books, such that an arbitrary planar multilink open book can be transformed into one of the model open books by composing its monodromy with a number of positive Dehn twists around essential curves. Positive Dehn twists can be realized by Weinstein 2-handle attachments, producing a Stein cobordism required by the theorem. 
Planar multilink open books for sandwiched singularities play a key role in our proof. In this case, the page is a disk with punctures, the multiplicity of the outer boundary component is 1, and the monodromy of the multilink open book is given by the monodromy of an associated curve germ, 
plus an arbitrarily large number of positive boundary parallel Dehn twists around the holes in the page, see Lemma~\ref{lem:monodromygerm} and Remark~\ref{rmk:bdrytwists}. To construct a larger family of model multilinks without the multiplicity 1 restriction, 
we need the following lemma.

\begin{lemma}\label{lem:planarspinalgraph}
	Given a sandwiched presentation  $\Gamma\leq\tilde\Gamma$ for a sandwiched graph $\Gamma$, 
	there is another rational graph $\Gamma'$ with a planar multilink structure induced by a holomorphic function.  Topologically, the multilink open book for $\Gamma'$ is obtained by capping off the outer boundary component of the multilink open book page for $\Gamma$.% given by Lemma~\ref{lem:algplanar}.
\end{lemma}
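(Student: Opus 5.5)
Starting from the sandwiched presentation $\Gamma\leq\tilde\Gamma$, the plan is to pass to a larger graph $\Gamma'$ that remembers more of the blowup and carries a function $g'$ whose strict transform drops the multiplicity-1 arrowhead $A_0$ of \eqref{eq:divisor} and keeps the others. Recall the function $g$ on $(X,o)$: it is the pullback of the projection $pr:\C^2\to\C_x$ to the blowup realizing $\tilde\Gamma$, restricted near $\Gamma$. The extra arrowhead $A_0$ over $E_0$ is the strict transform of the line $\{x=0\}$, which meets only the last surviving curve $E_0$ of the blowdown.

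We take $\Gamma'$ to be $\Gamma$ with one extra vertex $F$ attached to $E_0$. Geometrically, $F$ is the strict transform of the line $\{x=0\}$ compactified to a projective line $\C P^1$. Equivalently, we work in the total space of $\mathcal O(-1)$ over $\C P^1$ blown up according to $\tilde\Gamma$, or more simply we consider $pr$ as the fibration $\mathcal O(1)\to \C P^1$-type picture in which the fiber $\{x=0\}$ becomes a compact sphere. Its self-intersection in $\C^2$ blown up at the origin along $E_0$ is $0-1=-1$. To make it a compact curve we should instead take $\C P^2$ minus a line, or $\C P^1\times\C$. The concrete choice is $\C P^1\times \C\ni([y:1],x)$, with the projection to $\C_x$, and we blow up the point $([0:1],0)$ according to $\tilde\Gamma$. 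The fiber $\{x=0\}\cong\C P^1$ then has self-intersection $0$ before blowing up and $-1$ afterward, and it meets only $E_0$.

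Set $\Gamma'=\Gamma\cup\{F\}$ with $F^2=-1$, joined by an edge to $E_0$. We remove the $(-1)$-leaves of $\tilde\Gamma\setminus\Gamma$, which lie away from $F$. The steps are then as follows.

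1. Check that $\Gamma'$ is negative definite and rational. Contractibility and rationality should follow via Laufer's algorithm, or via Artin's criterion, from the rationality of $\Gamma$ together with the fact that $E_0$ enters $\div g$ with coefficient $1$. Attaching a $(-1)$ curve transversally to $E_0$, where the fundamental cycle has coefficient $1$, preserves both properties. This is the step I expect to be the \textbf{main obstacle}. The natural attempt is $Z'=Z+F$ in Laufer's algorithm, using $Z\cdot E_0\le 0$; alternatively, show that $\Gamma'$ is still sandwiched-like or a subgraph of a graph blowing down to a point after taking $x$ as the coordinate. If $F^2=-1$ fails to be negative definite, I would blow up further along $F$, which is allowed by Remark~\ref{rmk:bdrytwists}-type decorations, to lower $F^2$.

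2. The function $x$ on $\C P^1\times\C$ pulls back to a function $g'$. Contracting $\Gamma'$, which is possible by Grauert since $\Gamma'$ is negative definite, gives a normal surface $(X',o')$. By \eqref{eq:divisor}, the total transform of $g'$ along $\Gamma'$ is $F+E_0+\sum r_vE_v+\sum m_iA_i$. The old arrowhead $A_0$ is absorbed into the compact curve $F$, and no new arrowheads appear.

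3. By Lemma~\ref{lem:ob-from-f}, $g'$ induces a multilink open book on the link of $\Gamma'$ supporting $\xi_{can}$, with binding components of multiplicity $m_i$ over $E(C_i)$. Topologically, the Milnor fiber of $g'$ is obtained from that of $g$ by gluing in the part near $F$. Near $F$ the function is a fibration over a disk whose fibers are discs filling the boundary circle that formerly bounded the arrowhead $A_0$. Hence the page is the disk with holes of Lemma~\ref{lem:monodromygerm} with the outer boundary capped off, which is a sphere with holes, and it is planar. A'Campo's formula \eqref{eq:chi} serves as a check: adding $F$ with $r_F=1$ and $d_F=1$ contributes $+1$ to the Euler characteristic, which matches capping off one boundary component.
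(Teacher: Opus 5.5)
Your construction is the paper's, just not blown down. Contracting your $(-1)$-curve $F$ turns $\Gamma\cup\{F\}$ into the paper's $\Gamma'$, namely $\Gamma$ with the framing of $E_0$ raised by one. Your Step~3 (near $F\setminus E_0$ the function $x$ is a trivial fibration with disk fibers, so the boundary circle of $A_0$ gets capped) is a valid and more direct replacement for the paper's page-framing computation.

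The genuine gap is Step~1. Rationality of $\Gamma'$ is part of the statement, you leave it open, and the route you sketch fails. Starting Laufer's algorithm with $Z_{min}(\Gamma)+F$ needs $Z_{min}(\Gamma)\cdot E_0\le -1$, not just $\le 0$, because $(Z_{min}(\Gamma)+F)\cdot E_0=Z_{min}(\Gamma)\cdot E_0+1$. The general principle you invoke is also false: attaching a $(-1)$-curve at a vertex where the fundamental cycle has coefficient $1$ need not preserve negative definiteness or rationality. For $D_4$ the fundamental cycle $2c+\ell_1+\ell_2+\ell_3$ has coefficient $1$ at the leaf $\ell_1$. Yet after attaching a $(-1)$-curve $F$ to $\ell_1$, the cycle $2c+2\ell_1+\ell_2+\ell_3+2F$ pairs to $0$ with every vertex, so the form is only semidefinite. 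This is the non-sandwiched behaviour of $D_4$ from Section~\ref{sec:sandw}. Your fallback of blowing up along $F$ does not help either. Keeping the new curve in the graph cannot cure a failure of definiteness. Leaving it out changes the link and creates a new multiplicity-one binding component over $F$, which undoes the capping.

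The missing ingredient is the multiplicity-one arrowhead $A_0$ itself, not merely the coefficient $1$ of $E_0$. Work with the exceptional part $F+D_e$ of $\div g'$ from your Step~2, where $D_e=E_0+\sum r_vE_v$. These intersection numbers are computed on the smooth blown-up surface before any contraction, so there is no circularity.
\begin{itemize}
\item $(F+D_e)\cdot F=0$.
\item $(F+D_e)\cdot E_0=D_e\cdot E_0+1\le 0$, because $D_e\cdot E_0\le -1$ by \eqref{eq:div0} applied to $A_0$.
\item $(F+D_e)\cdot E_v=-a_v\le 0$ at every other vertex, strictly negative at the vertices $E(C_i)$.
\end{itemize}
A strictly positive cycle with these properties on a connected graph gives negative definiteness. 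Alternatively, $\Gamma\cup\{F\}$ is a proper subconfiguration of the fiber $\{x=0\}$, so Zariski's lemma applies.

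The same cycle forces $Z_{min}(\Gamma\cup\{F\})$ to have coefficient $1$ at both $E_0$ and $F$. So a Laufer sequence starting $E_0,F,\dots$ never returns to $E_0$ or $F$. At all other vertices it sees exactly the intersection numbers of the sequence for $\Gamma$, so all its increments equal $1$. This is the paper's argument, which the paper runs on the blown-down graph. As a byproduct, $Z_{min}(\Gamma)\cdot E_0\le -1$ does hold here, which rescues your $Z_{min}(\Gamma)+F$ idea, but only because of $A_0$.
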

\begin{proof}
For the link of the sandwiched singularity with the graph $\Gamma \subset \tilde\Gamma$, we take the multilink open  book from Lemma~\ref{lem:monodromygerm}, given by the holomorphic function $g$ described before the statement of that lemma. This open book is planar, its page is a disk with holes, whose outer boundary component has multiplicity $1$ in the binding and corresponds to the arrow $A_0$ of multiplicity $1$ over $E_0$ in the strict transform of $g$.  This boundary component will be capped off to create an open book compatible with the resolution graph $\Gamma'$. We construct $\Gamma'$ by increasing the framing of $E_0$ by $1$. Let  $D_e = \sum_{v \in \Gamma} r_v E_v$ be the exceptional part of the strict transform for $g$, and 
recall that by~\eqref{eq:divisor}, $r_0=1$.  We consider the divisor $D_e$ as an element in the intersection 
	lattice for $\Gamma'$ and check that $D_e$ is in the Lipman cone  $\mathcal{S}(\Gamma')$.
Indeed, for any $E_v\neq E_0$, clearly $E_v\cdot_{\Gamma'}D_e=E_v\cdot_{\Gamma}D_e\leq 0$ as before.	
At $E_0$, we have $E_{0}\cdot_{\Gamma'}D_e=E_{0}\cdot_{\Gamma}D_e+1$  since  $E_{0}\cdot_{\Gamma'} E_0 =E_{0}\cdot_{\Gamma}E_0 + 1$ and $r_0=1$, and 
$0= E_0 \cdot \div g = E_0 \cdot_{\Gamma} (D_e + A_0) =  E_{0}\cdot_{\Gamma} D_e+1$.

Next, we use Laufer's rationality criterion \cite[Theorem 4.2]{Laufer}
%\comm{add reference to Laufer}
to check that the graph $\Gamma'$ gives a rational singularity (see also Lemma~\ref{lemma:almostsandwich}).
%This criterion works for resolution graphs  where each exceptional curve has genus zero. %MB:if there is a higher genus vertex the graph cannot be rational anyway, the algorithm rejects it as well
Recall that the fundamental cycle $Z_{min}$ is, by definition, 
the unique coordinatewise minimal element of the Lipman cone $\mathcal S$. Since the divisor $D_e$ is in the Lipman cone both for $\Gamma$ and $\Gamma'$, and it has coefficient $1$ at $E_0$, we know that 
$Z_{min}(\Gamma)$ and  $Z_{min}(\Gamma')$ also have coefficient $1$ at $E_0$ (since every coefficient of $Z_{min}$ is positive). There is a ``computation sequence" for $Z_{min}$ of a given resolution graph, given as follows. Start with an arbitrary vertex, for example $Z_0= E_0$. At every step, if there is a vertex $E_v$ such that  $Z_k\cdot E_v>0$, then we
set $Z_{k+1} = Z_k+ E_v$. Once $Z_N \cdot E_v \leq 0$ for each vertex, we have found $Z_{min}=Z_N$. 
Laufer's criterion says that the singularity is rational if and only if at every step
where $Z_k \cdot E_v > 0$, we have $Z_k \cdot E_v =1$. We claim that if we start with $E_0$, the computation sequences for $\Gamma$ and $\Gamma'$ can be chosen to be the same. Indeed, since we know that $E_0$ enters in $Z_{min}$ with coefficient $1$ for both graphs, it means we never have to add $E_0$ at any step. Therefore the self-intersection $E_0 \cdot E_0$ never comes into play during the computation, and the other parameters of the two graphs are the same. It follows that the graph $\Gamma'$ is rational since $\Gamma$ is.   
  
It remains to relate the open books. Filling a boundary component of multiplicity $1$ corresponds to doing $0$-surgery on that binding component, with respect to the page framing. The outer boundary 
component of the open book for the sandwiched singularity encoded by $\Gamma$ is represented by a fiber over $E_0$ in the plumbing structure. We translate the page framing to the plumbing structure framing as follows. 
Notice that in a coordinate chart centered on the intersection point $E_v\cap A_v$, the function $g$ takes the form $x^{a_v}y^{r_v}$, where the exceptional divisor $E_v$ is locally $y=0$, and similarly $A_v$ is locally given by $x=0$.  In this system, the plumbing longitude is given by the tangents of the $y$, and the meridian by the $x$ coordinate.  The part of the page near $E_v$ is given by $P=\{x^{a_v}y^{r_v}=\epsilon\}$. 
Intersect the page with the torus generated by $x=\epsilon_1\exp(i\alpha),\ y=\epsilon_2\exp(i\beta)$. After adjusting the norms,  we get  the equation for the arguments as $a_v\alpha+r_v\beta=\arg\epsilon$. Tangent vectors to this level set have the form $-r_v\partial_\alpha+a_v\partial_\beta$. As mentioned before,  the plumbing meridian is generated by $\partial_\alpha$, and the longitude by $\partial_\beta$, so the page-framed $0$-slope transforms into the rational $-\frac{r_v}{a_v}$ slope. This means that the $0$ surgery with respect 
to the page framing is realized by   the addition of a new $(-1)$ leaf  to $E_0$ in the plumbing graph. Blowing down this $(-1)$ vertex,  we get the plumbing $\Gamma'$,  with the new open book binding given by  the collection of $S^1$ fibers encoded by the divisor $D$. \end{proof}

%PLEASE EXPLAIN THE FRAMING CALCULATION.
 %\blue{We extend the function to the manifold after the surgery, and see that the page-framed 0-surgery corresponds to $-\frac ba$ surgery on a fiber over $E_{v_0}$ where $b=m_{v_0}$ is the multiplicity of $D_e$ on the vertex and $a=-D_e\cdot E_{v_0}$ the multiplicity of the binding component.} Since the binding component has multiplicity $1$ and $E_0$ has coefficient $1$ in $D_e$, we get a $-1$ surgery relative to the fiber framing.  

\begin{remark}
	Note that the same capping-off procedure may be done with any other boundary component of a page of an open book constructed from an augmented sandwiched graph. %, as long as that binding component 	has multiplicity $1$.
	Capping off amounts to removing the corresponding curve 
	component $C_i$ from the germ $C=\cup C_i$: the same computation shows that this is equivalent 
	to adding a $-1$ framed vertex to the support of the curve and blowing down. We get something novel in the case when we do this to the outer boundary component, since no component $C_i$ in the germ corresponds to the outer boundary.
\end{remark}
\begin{remark}\label{rmk:D4}
	We get a new perspective for the $D_n$ graphs from this lemma, cf. Example~\ref{ex:d4}. Consider the sandwiched graph on the left of Figure~\ref{fig:cusp-d4}, represented by a single cuspidal cubic with 6 markings. Filling the outer boundary component, we get a planar spinal presentation for $D_4$. More generally, a cuspidal cubic with $n+2$ markings gives a planar spinal presentation of $D_n$.
	
	\begin{figure}[ht!]
\begin{minipage}{0.32\textwidth}
	\begin{tikzpicture}[scale=0.75, every node/.style={circle, fill=black, inner sep=1.1pt}]
		\node[label=above:$(2)$] (c) at (0,-2) {};
		\node[label=below:$(1)$, label=left:$-3$] (l1) at (0,-3) {};
		\node[label=below:$(2)$] (l2) at (-1,-1) {};
		\node[label=below:$(1)$, style={color=white, fill=white}] (a2) at (1,-3) {};
		\node[label=below:$(2)$, style={color=white, fill=white}] (a3) at (-2,0) {};
		\node[label=below:$(1)$] (l3) at (1,-1) {};
		\draw  (c)--(l1) (c)--(l2) (c)--(l3);
		\draw[->] (l2)--(a3);
		\draw[->] (l1)--(a2);
		\draw (0,2) circle (60pt);
		\draw[color=green] (-1,2)--(1,2);
		\draw[fill=white] (-1,2) circle (10pt);
		\draw[fill=white] (1,2) circle (10pt);
		\draw[color=red] (0,2) circle (55pt);
		\draw[color=red] (-1,2) circle (15pt);
		\draw[color=red] (-1,2) circle (20pt);
		\draw[color=red] (1,2) circle (15pt);
		\draw[color=red] (1,2) circle (20pt);
	\end{tikzpicture}
\end{minipage}\centering
\begin{minipage}{0.32\textwidth}
	\begin{tikzpicture}[scale=0.75, every node/.style={circle, fill=black, inner sep=1.1pt}]
		\node[label=above:$(2)$] (c) at (0,-2) {};
		\node[label=below:$(1)$, label=left:$-3$] (l1) at (0,-3) {};
		\node[label=below:$(2)$] (l2) at (-1,-1) {};
		\node[label=below:$(1)$, label=above:$-1$] (a2) at (1,-3) {};
		\node[label=below:$(2)$, style={color=white, fill=white}] (a3) at (-2,0) {};
		\node[label=below:$(1)$] (l3) at (1,-1) {};
		\draw  (c)--(l1) (c)--(l2) (c)--(l3);
		\draw[->] (l2)--(a3);
		\draw (l1)--(a2);
		\draw[color=white] (0,2) circle (60pt);
		\draw[color=green] (-1,2)--(1,2);
		\draw[fill=white] (-1,2) circle (10pt);
		\draw[fill=white] (1,2) circle (10pt);
		\draw[color=red] (0,2) circle (55pt);
		\draw[color=red] (-1,2) circle (15pt);
		\draw[color=red] (-1,2) circle (20pt);
		\draw[color=red] (1,2) circle (15pt);
		\draw[color=red] (1,2) circle (20pt);
	\end{tikzpicture}
\end{minipage}\centering
\begin{minipage}{0.32\textwidth}
	\begin{tikzpicture}[scale=0.75, every node/.style={circle, fill=black, inner sep=1.1pt}]
		\node[label=above:$(2)$] (c) at (0,-2) {};
		\node[label=below:$(1)$] (l1) at (0,-3) {};
		\node[label=below:$(2)$] (l2) at (-1,-1) {};
		\node[label=below:$(2)$, style={color=white, fill=white}] (a3) at (-2,0) {};
		\node[label=below:$(1)$] (l3) at (1,-1) {};
		\draw  (c)--(l1) (c)--(l2) (c)--(l3);
		\draw[->] (l2)--(a3);
		\draw[color=white] (0,2) circle (60pt);
		\draw[color=green] (-1,2)--(1,2);
		\draw[fill=white] (-1,2) circle (10pt);
		\draw[fill=white] (1,2) circle (10pt);
		\draw[color=red] (-1,2) circle (15pt);
		\draw[color=red] (-1,2) circle (20pt);
		\draw[color=red] (1,2) circle (15pt);
		\draw[color=red] (1,2) circle (20pt);
	\end{tikzpicture}
\end{minipage}\centering
		\caption{A sandwiched graph and the corresponding planar multilink graph after capping off the outer boundary component. Unlabeled vertices are framed $-2$. By redrawing the diagram to take another boundary component to the outside, we recover Figure~\ref{fig:D4spinal} for $D_4$.}
		\label{fig:cusp-d4}
	\end{figure}
\end{remark}

Now we are ready to prove Theorem~\ref{thm:contactvanishingstatement}.

\begin{proof}[Proof of Theorem~\ref{thm:contactvanishingstatement}]
We first give a proof for the case of a planar multilink with a multiplicity 1 boundary component of the page $P$. In this case, we construct a cobordism to the link of a sandwiched singularity.  Represent the page as a disk with $m$ holes such that the outer boundary of the disk has multiplicity 1: this means that its monodromy $\Psi$ fixes the outer boundary of $P$ and induces a permutation of the holes. We can find a germ of a (reducible) singular plane curve $C$ such that the fiber bundle considered in Section~\ref{sec:sandw} also has an $m$-holed disk as its fiber, and 
whose monodromy induces the same permutation (under the appropriate labeling of the holes).
 To construct $C$,  decompose the permutation into cycles, so that $m= \sum m_i$, 
 where the $m_i$ are the lengths of the cycles. For a cycle of length $m_i$, 
 let the component $C_i$  of the germ be modeled on $y^{m_i} + x^{m_i+1}=0$: the monodromy of this germ gives a cyclic permutation of $m_i$ branches. The germ 
 $C = \cup C_i$ can be taken to be an arbitrary reducible singular germ whose irreducible components are as above. Working with the embedded resolution of $C$ and taking sufficiently large integers $w_i$ as decoration of the components, we can find a sandwiched singularity whose contact link has a multilink open book decomposition with monodromy $\Phi$ inducing the same permutation (see Lemma~\ref{lem:monodromygerm}). 

Further, for any $r>0$, we claim that 
there exists a sandwiched singularity whose contact link has a planar multilink open book with the monodromy 
\begin{equation}\label{eq:newphi}
\Phi' = \Phi \Delta^r \delta_1^r \dots \delta_m^r,
\end{equation}
where $\Delta$ stands for the positive Dehn twist around the outer boundary of the disk, and $\delta_1, \dots, \delta_m$ are the boundary parallel Dehn twists around the holes. Observe that for a curve germ $C$, there is another curve germ $C'$ such that $C$ is isomorphic to the strict transform of $C'$ after blowup. The monodromy of $C'$ is the same as the monodromy of $C$ composed with the positive Dehn 
twist $\Delta$ around all the punctures in the disk. Repeating this procedure $r$ times, we can find a curve germ whose monodromy equals
the monodromy of $C$ composed with $\Delta^r$ (and so the monodromy of the multilink open book for the associated sandwiched singularity  has the same property).   To add boundary twists around the holes in the open book, 
we increase the decorations on the curve germ. 

We wish to construct a Stein cobordism from the contact manifold supported by the given multilink open book $(P, \Psi)$ to the sandwiched singularity link with the open book $(P, \Phi')$ as in~\eqref{eq:newphi}, for $r$ large enough. 
It is a well-known fact that adding a Weinstein $2$-handle along an essential curve $\alpha$ on a page of an open book $(S, \phi)$ gives a Stein cobordism from the contact manifold represented by $(S, \phi)$ to that represented by $(S, \phi')$, where $\phi'$ is the composition of $\phi$ with the positive Dehn twist $D_{\alpha}$ around $\alpha$. Moreover, one can take a factorization of $\phi$ and insert $D_\alpha$ in an arbitrary place in the factorization to obtain a new contact manifold, since the mapping torus of the open book fibers over the circle (and only the cyclic order of the elements in the factorization matters).  The same is true in the multilink  case, since the Weinstein handles are attached away from the spine.  

Now, observe that because the permutations of the holes induced by $\Phi$ and $\Psi$  are the same, the diffeomorphism $\Psi \Phi^{-1}$ fixes the boundary $\d P$ of the $m$-holed disk $P$. As an element of the mapping class group of $P$, it decomposes as a product of (positive and negative) Dehn twists around essential curves in $P$. Iteratively using the lantern relation, we can further decompose it into a product of Dehn twists such that each of these either encloses two holes on the disk page or is parallel to the boundary of a single hole. We can introduce additional positive Dehn twists to kill all the negative Dehn twists in this factorization. Further, for each individual positive Dehn twist enclosing a pair of holes, we introduce a number of additional positive Dehn twists enclosing other pairs, guided by the daisy relation \cite[Figure 11]{PVHM} and \cite{HMVHM}, so that the product of the resulting collection is  given by several positive Dehn twists around the outer boundary, composed with a number of positive twists around the individual holes. Putting everything together, we have a recipe for creating a Stein cobordism from 
the open book $(P, \Psi)=(P, \Psi \Phi^{-1} \Phi)$ to an open book of the form $(P,  \Phi \Delta^r \delta_1^r \dots \delta_m^r)$, as required. The latter represents the link of a sandwiched singularity.

Now we address the general case, when there may be no component of multiplicity 1. Let $(S, \Psi)$ be such an open book, where $S$ stands for a {\em sphere} with $m$ holes. Perhaps after an isotopy, we may assume that $\Psi$ fixes a small disk in $S$. Cut out the interior of this disk to create a new open book $(P, \Psi)$, where $P$ is an $m$-holed {\em disk}, and we use the same notation for the restriction of the monodromy. By construction, the multilink open book $(P, \Psi)$ has a multiplicity 1 outer boundary component. The procedure in the first part of the proof gives a Stein cobordism to a multilink open book $(P, \Phi')$ that supports the contact link of a sandwiched singularity. Since this cobordism is given by the union of Weinstein 2-handles attached away from the binding, it will also induce a Stein cobordism from $(S, \Psi)$ to $(S, \Phi')$, when we cap off the outer boundary component $P$ to create multilink open books with page $S$.  By Lemma~\ref{lem:planarspinalgraph}, the open book   $(S, \Phi')$
represents the link of a rational singularity.  It is compatible with the canonical contact structure
since the open book comes from a holomorphic function.
\end{proof}
To conclude this section, we discuss the connection between planar multilinks and rational singularities. A more precise classification result will be established in Section~\ref{sec:kulikov}. 

\begin{proof}[Proof of Theorem~\ref{thm:rational}]

Let $(X, o)$ be a normal surface singularity such that $(Y, \xi_{can})$ is supported by a planar multilink open book. From Corollary~\ref{cor:only-spheres}, it follows that the dual resolution graph of $(X, o)$ must be a tree whose vertices are curves of genus $0$: higher genus exceptional curves have already been ruled out, and cycles in the graph can be used to create a symplectic torus when all the vertices are spheres. It follows that the link $Y$ is a rational homology sphere. By~\cite{Bodnar2021heegaard} the contact invariant $c^+(\xi_{can})$ is in the image of the $U$-map on $HF^+(-Y)$ if and only if the singularity is rational, so Theorem~\ref{thm:rational} follows from~Theorem~\ref{thm:contactvanishing}.
\end{proof}

\begin{remark}
Several other ideas can be used to demonstrate rationality of $(X, o)$, at least in some special cases.
If $(X, o)$ is smoothable, then any Milnor fiber $W$ has $b_2^+(W)=2 p_g (X, o)$, where $p_g$ is the geometric genus. By~Proposition~\ref{prop:neg-def}, it follows that $p_g=0$. More generally, if $(X, o)$ is not rational, then one can find non-negative definite symplectic fillings by using the known cases of the L-space conjecture, \cite{HRRW, SaRas}. Indeed, in the non-rational case the link $Y$ is not an L-space by~\cite{nemethi2017links}, and therefore it carries a taut foliation. We can then  construct a symplectic structure on $Y \times [-1, 1]$  and cap off $Y \times \{-1\}$ by a concave filling with $b_2^+>0$,  which results in a convex symplectic filling with $b_2^+>0$ for $Y \times \{+1\}$.  In the last sentence, we did not specify the contact structures, and one should check that the canonical contact structure can be obtained as perturbation of a taut foliation. (For Seifert fibered spaces over $S^2$, this can be achieved by working with  transverse taut foliations and transverse contact structures.)  
 \end{remark}

\section{Planar multilink singularities}\label{sec:kulikov}

In this section, we extend the preceding constructions and results on sandwiched singularities,
 cha\-rac\-te\-rizing normal surface singularity links that admit a planar multilink open book. We define the topological candidates.
\begin{definition}
	We call a resolution graph $\Gamma$ \textit{planar multilink} (or ${pm}$ for short) if 
	$\Gamma\leq\tilde\Gamma$, where $\tilde\Gamma$ is a negative semidefinite graph that blows down to a single $0$-framed vertex.
\end{definition}
We extend Spivakovsky's construction for sandwiched graphs  to give a more precise description of  $pm$ graphs.
\begin{lemma}[cf. {\cite[Proposition 1.13.]{spivakovsky1987sandwiched}}]\label{lem:spivakovsky}
	A resolution graph is planar multilink if and only if it can be augmented with $-1$ leaves, so that the resulting graph blows down to a single $0$ vertex.
\end{lemma}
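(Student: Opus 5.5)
The ``if'' direction is immediate from the definition: a graph obtained from $\Gamma$ by attaching $(-1)$ leaves that blows down to a single $0$-vertex is a graph $\tilde\Gamma\supseteq\Gamma$ of the required kind. It is negative semidefinite, because blowups preserve the signature up to adding $\langle -1\rangle$ summands and the $0$-vertex lattice is semidefinite. The real content is the ``only if'' direction. We will adapt Spivakovsky's argument for \cite[Proposition 1.13]{spivakovsky1987sandwiched}. We reverse the blowdown sequence $\tilde\Gamma\to(0)$ into a blowup sequence starting from a single $0$-framed vertex $F$, and we record which exceptional curves of that sequence belong to $\Gamma$.

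Concretely, suppose $\tilde\Gamma$ is obtained from the $0$-vertex by a sequence of blowups at vertices or edges, so that $\Gamma$ is the subgraph spanned by a subset of the final vertices. We may assume $F\in\Gamma$ or not; both cases are handled the same way. We then prune the sequence. Any vertex of $\tilde\Gamma\setminus\Gamma$ whose creation is not needed to produce the framings of vertices of $\Gamma$ can be removed. For this we reorder the blowups so that the curves outside $\Gamma$ are created last among those infinitely near to a given point. We then replace each maximal connected piece $K$ of $\tilde\Gamma\setminus\Gamma$ by the $(-1)$-curve created first in that piece, since blowing up further points on it only lowers framings outside $\Gamma$. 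Blowing up a point on a curve not in $\Gamma$ changes the framing of a $\Gamma$-vertex only if that point lies on the $\Gamma$-vertex. So each component $K$ meeting $\Gamma$ can be replaced by a chain of $(-1)$ leaves attached to the $\Gamma$-vertices adjacent to $K$. This must be done compatibly with the blowup order: a point on $E_v\cap K$ becomes a point on $E_v$ together with a new $(-1)$ curve. The outcome is $\Gamma$ with $(-1)$ leaves attached, still obtained from the $0$-vertex by blowups, hence blowing down to a single $0$-vertex.

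The key steps, in order, are as follows. First, a local replacement lemma: if $u\notin\Gamma$ is a vertex of $\tilde\Gamma$ adjacent to some $v_1,\dots,v_k\in\Gamma$, we replace $u$ and everything in its component of $\tilde\Gamma\setminus\Gamma$ by $(-1)$ leaves on each $v_i$, one per edge. The framing changes on the $v_i$ then match, and the new graph still blows down to the same final graph. Second, induction on the number of vertices outside $\Gamma$ that are not $(-1)$ leaves. Third, a check that the final $0$-vertex survives, that is, that we never blow down to the empty graph or to a negative vertex. This holds because the total lattice of the augmented graph is unimodular of signature $(0,1,n)$ type. Moreover, the semidefinite part forces the last surviving vertex to carry framing $0$, since blowdowns preserve the null vector.

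The main obstacle is the first step when a component $K$ of $\tilde\Gamma\setminus\Gamma$ is attached to $\Gamma$ along more than one edge, or when $K$ contains the original $0$-vertex $F$. In the first case, Spivakovsky's trick in the sandwiched case uses that a tree plus an edge-blowup only produces such $K$ as chains between two $\Gamma$-vertices. We would handle it by tracking the infinitely-near points and showing that separating the chain into two leaves is realized by blowing up the two intersection points separately. In the second case, where $F\notin\Gamma$, we cannot discard $F$. We first try to show that some vertex of $\Gamma$ can play the role of the last surviving $0$-vertex after reordering blowdowns, using that the null class of $\tilde\Gamma$ must have positive coefficient on some $\Gamma$-vertex. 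If this fails, we keep $F$'s component and argue it blows down within $\Gamma\cup\{\text{leaves}\}$.
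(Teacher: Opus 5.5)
Your ``if'' direction is fine, but the ``only if'' direction has a genuine gap. Your Step 1 is false as stated: it replaces a component $K$ of $\tilde\Gamma\setminus\Gamma$ by $(-1)$ leaves, ``one per edge'', and claims the framing changes match. The framing of $v\in\Gamma$ drops once for every blowup \emph{centered} on $E_v$. So the leaves must be counted one per blowup at a point of $E_v\cap K$, while components created \emph{before} $v$ contribute nothing. Your own remark that such a point ``becomes a point on $E_v$ together with a new $(-1)$ curve'' is the right rule, but it contradicts Step 1. For instance, blow up a point of the $0$-curve $F$, then the point where the new curve meets $F$. This gives the chain $F(-2)-k_1(-1)-k_0(-2)$. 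For $\Gamma=\{F\}$ the piece $\{k_1,k_0\}$ meets $\Gamma$ in one edge, yet $F$ needs two leaves. Two smaller points. Since $\Gamma$ is connected and $\tilde\Gamma$ is a tree, each component of $\tilde\Gamma\setminus\Gamma$ meets $\Gamma$ in exactly one edge, so your ``several edges'' obstacle never arises. Your third step is also unnecessary, and a lattice with a null vector is not unimodular: a graph produced by blowups from a $0$-vertex blows down to one by construction.

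The real difficulty is the case $F\notin\Gamma$, and your suggested re-rooting by reordering blowdowns cannot work. The last survivor of any blowdown of $\tilde\Gamma$ to a $0$-vertex has coefficient $1$ in the primitive null vector. Positivity of the coefficients is automatic and gives you nothing, and $\Gamma$ may contain no vertex of coefficient $1$. Blow up $F$ at a point, then at the intersection point, then at two general points of the middle curve $e$. This yields $F(-2)-e(-3)-e_1(-2)$ with two $(-1)$ leaves on $e$, and the null vector has coefficient $2$ on $e$. For $\Gamma=\{e\}$ no blowdown order ends at $e$. Your ``one leaf per edge'' rule gives $(-3)$ with four $(-1)$ leaves, which is indefinite; the correct augmentation has three leaves.

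The missing idea is to stop insisting on exact framings. Replay the blowup sequence, starting from the first-created vertex of $\Gamma$ as the $0$-curve. Keep blowups centered on $\Gamma$-vertices or on edges inside $\Gamma$. Turn an edge blowup with exactly one endpoint in $\Gamma$ into a vertex blowup on that endpoint, and drop the rest. Connectedness of $\Gamma$ ensures every other $\Gamma$-vertex has a parent in $\Gamma$. So this produces a copy of $\Gamma$ plus $(-1)$ leaves, with framings \emph{at least} the required ones; only the new root can be off, by $+1$. Further vertex blowups, i.e.\ extra $(-1)$ leaves, then lower the framings to the correct values. This pruning, together with the observation that the resulting self-intersections are only possibly too high and are fixed by additional vertex blowups, is essentially the paper's proof.
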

\begin{proof}
	We provide some details of the proof for convenience.	
	Consider $\Gamma\leq\tilde\Gamma$ such that $\tilde\Gamma$ blows down to a $0$ vertex. Each blowup is either a vertex or an edge blowup. We modify the sequence of blowups as follows:
	if the blowup happens on a vertex in $\Gamma$,  keep it in the sequence; if both endpoints of an edge are in $\Gamma$, keep it in the sequence; if only one endpoint is in $\Gamma$, change it to a vertex blowup; otherwise leave it out. We obtain a possibly shorter blowup sequence, creating a graph $\tilde\Gamma'$ with a subgraph $\Gamma'$ which is graph-isomorphic to $\Gamma$, with possibly higher self-intersections  of some vertices. The self-intersections can be adjusted via further vertex blowups. This construction also guarantees that all vertices not in $\Gamma$ are $-1$ leaves, since we leave out any vertex that is not in $\Gamma$ during the blowup process.
\end{proof}
From now on, unless specified otherwise, we always assume that $\tilde\Gamma\setminus\Gamma$ consists of $(-1)$ leaves of $\tilde\Gamma$.
It is easy to see that {\em pm} graphs are closely related to sandwiched graphs:
\begin{lemma}[cf. Lemma~\ref{lem:planarspinalgraph}]\label{lemma:almostsandwich}
	Every planar multilink graph is obtained from an (augmented) sandwiched graph,  by increasing the framing of the last vertex to be blown down in a sandwiched graph by~$1$.
\end{lemma}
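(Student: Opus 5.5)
The plan is to compare the two blowdown sequences directly. By Lemma~\ref{lem:spivakovsky}, a \emph{pm} graph $\Gamma$ sits inside an augmented graph $\tilde\Gamma$, obtained by adding $(-1)$ leaves, which blows down to a single $0$-framed vertex. A sandwiched graph sits inside $\tilde\Gamma$ of the same shape that blows down to the empty graph. Equivalently, it blows down to a single $(-1)$-vertex $E_0$, the last surviving vertex, whose final blowdown empties the graph. The statement follows once we check that the two kinds of blowdown sequences correspond under a shift of one framing by $1$.

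First, let $\Gamma\le\tilde\Gamma$ be an augmented \emph{pm} graph and fix a blowdown sequence $\tilde\Gamma\to\{0\}$. Call the surviving vertex $E_0$. Every blown-down vertex is a $(-1)$-vertex distinct from $E_0$. The blowdowns change the framing of $E_0$ only through its adjacencies and never use the framing of $E_0$ itself as input.

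Second, decrease the framing of $E_0$ by $1$ in $\tilde\Gamma$, calling the results $\tilde\Gamma^-$ and $\Gamma^-$. Each blowdown step changes framings by additive amounts determined only by adjacency, so the same sequence of blowdowns is still valid on $\tilde\Gamma^-$. It ends at a single vertex of framing $0-1=-1$, which then blows down to the empty graph. Hence $\Gamma^-\le\tilde\Gamma^-$ is an augmented sandwiched presentation whose last vertex blown down is $E_0$. Raising the framing of $E_0$ back by $1$ recovers $\Gamma$, which proves the statement.

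If $E_0$ happens to be one of the added $(-1)$ leaves rather than a vertex of $\Gamma$, then $\Gamma^-=\Gamma$ and $\Gamma$ is itself sandwiched. The statement still holds as stated, since its framing-increased vertex lies in the augmentation. If one insists that $E_0\in\Gamma$, I would rearrange the blowdown order: a $(-1)$ leaf adjacent to the last two surviving vertices can be blown down earlier. I expect this bookkeeping about which vertex survives last, and whether it may be taken in $\Gamma$, to be the only delicate point.

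For the converse, which shows that such a graph is \emph{pm}, run the argument backwards: starting from a sandwiched augmentation blowing down to $(-1)$ at $E_0$, raise the framing of $E_0$ by $1$ and obtain a graph blowing down to $0$. This matches the construction $\Gamma\mapsto\Gamma'$ in Lemma~\ref{lem:planarspinalgraph}.
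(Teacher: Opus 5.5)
Your argument is correct and is the paper's proof run in the opposite direction: the paper blows $\tilde\Gamma$ down to a $0$-vertex, lowers it to $-1$, blows down once more and reverses the whole sequence. That reversal produces exactly your $\tilde\Gamma^-$, with the same last vertex $E_0$. Your extra care about having $E_0\in\Gamma$ goes beyond the paper. The precise reason behind it is that if the survivor is an added leaf, the penultimate graph must be two adjacent $(-1)$-vertices, so you may choose which one survives. Your converse aside is not needed for the statement, and as written it would also need the raised graph to be negative definite, which fails for instance when $\Gamma$ is a single $(-1)$-vertex.
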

\begin{proof} Consider an augmented graph $\tilde{\Gamma}$ for $\Gamma$, blow it down to a $0$-framed vertex, then decrease the $0$-framing to $-1$ to blow down one more time. Reversing the blowdown sequence, we get a sandwiched graph related to $\Gamma$ as claimed. 
\end{proof}

 The main result of this section is
\begin{theorem}\label{thm:pm-graph}
 A normal surface singularity link admits a planar multilink open book decomposition if and only if its resolution is given by a  \textit{pm} graph.
\end{theorem}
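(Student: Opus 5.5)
We prove the two directions separately, mirroring the proof of Theorem~\ref{thm:equivalence}: the ``if'' direction is constructive via Lemma~\ref{lem:planarspinalgraph}, and the ``only if'' direction goes through a lattice-embedding characterization analogous to Theorem~\ref{thm:sandwich}.

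\emph{If.} Let $\Gamma$ be a \emph{pm} graph. By Lemma~\ref{lem:spivakovsky} we may assume $\tilde\Gamma\setminus\Gamma$ consists of $(-1)$ leaves and $\tilde\Gamma$ blows down to a single $0$-vertex. By Lemma~\ref{lemma:almostsandwich}, lowering the framing of the last surviving vertex $E_0$ by $1$ gives a sandwiched graph $\Gamma_s\subset\tilde\Gamma_s$, where $\tilde\Gamma_s$ blows down to the empty graph. Lemma~\ref{lem:monodromygerm} then supplies a holomorphic function $g$ on the sandwiched singularity, with $r_0=1$ at $E_0$ by~\eqref{eq:divisor}, inducing a planar multilink with an outer boundary of multiplicity one. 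Lemma~\ref{lem:planarspinalgraph} shows that $D_e$ lies in the Lipman cone of $\Gamma$ (the graph with $E_0$ raised by one), that $\Gamma$ is rational, and that capping off the outer boundary gives a planar multilink for $\Gamma$. Since $\Gamma$ is rational, the lemma from \cite[Remark 7.1.14]{nemethi2022normal} realizes $D_e$ by a holomorphic function on \emph{any} analytic structure with graph $\Gamma$. By \cite{CNPP} its Milnor multilink supports $\xi_{can}$, and by Lemma~\ref{lem:ob-from-f} its binding data are as described. So this direction is a direct assembly of earlier results.

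\emph{Only if.} Suppose $(Y,\xi_{can})$ is supported by a planar multilink open book. By Theorem~\ref{thm:rational} and the proof of that theorem, the minimal good resolution is a negative definite tree of spheres. The resolution is a strong filling, so by Theorem~\ref{thm:nearlyL-sphere} it is a nearly Lefschetz fibration complementary to a multisection in $S^2\times D^2\# n\cptwobar$. By Corollary~\ref{cor:sympsphere}, every vertex, and indeed the sum over every connected subgraph $\Gamma'$ (which can be smoothed to a symplectic sphere, as in \cite{ghiggini2021surface}), has class $\alpha_j-\sum_I\alpha_i$ or $-\sum_I\alpha_i$. We encode this as an embedding into $\langle 0\rangle\oplus\langle-1\rangle^N$, using the fiber class $F=[S^2]$ with $F^2=0$, via $i_*[x]=w F+\sum a_i e_i$. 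Call this a \emph{pm-embedding}: each connected subgraph sum maps to either $e_\gamma-\sum_I e_i$ or $F-\sum_I e_i$. For the resolution of a singularity there is at least one vertex with $w=1$ (cf.\ the $D_4$ example). Lemma~\ref{lem:nbrshared} carries over to pm-embeddings, since $F$ is orthogonal to everything and subgraph sums contain $F$ at most once.

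We then repeat the combinatorics of Section~\ref{sec:s-embed}. First we classify basis elements into types 1--4, with the type 1 role now possibly played by $F$. Next we attach a $(-1)$ leaf for each type 2 element; the negative semidefiniteness argument is the same, but we now allow a null vector, which must be a multiple of $F$. Then we blow down vertices mapping to a single $e_v$, using Proposition~\ref{prop:blowdown}. Finally we prove the analogue of Proposition~\ref{prop:minplanar}: a minimal pm-embedded graph with no type 2 elements is a single $0$-vertex mapping to $F$. Blowing down in this way gives $\tilde\Gamma\supset\Gamma$ collapsing to a $0$-vertex, which is exactly the \emph{pm} condition.

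\emph{Main obstacle.} The hard step is the minimality analysis in the presence of $F$, specifically the leaf-contraction induction of Proposition~\ref{prop:minplanar}. In the case with no multiplicity one component (no type 1 element $e_\gamma$), all subgraph sums carrying $F$ must be controlled, and the terminal object must be shown to be a single $0$-vertex rather than, for instance, a vertex $F-e$ of square $-1$. I would handle this by reducing to the sandwiched case. Remove $F$ by cutting out a disk in the sphere page, exactly as in the last step of the proof of Theorem~\ref{thm:contactvanishingstatement}. This replaces $F$ with a new generator $e_0$, lowers $E_0$ by one, and turns the pm-embedding into an $s$-embedding of the graph with $E_0$ lowered. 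Theorem~\ref{thm:sandwich} then shows this graph is sandwiched, and Lemma~\ref{lemma:almostsandwich} converts the conclusion back to the \emph{pm} condition.
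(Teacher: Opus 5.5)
Your ``only if'' direction is the paper's argument in different packaging. Corollary~\ref{cor:sympsphere}, applied to smoothings of connected subgraphs, gives the paper's $p$-embedding. Proposition~\ref{prop:p-embed} then likewise gives the unique vertex without a positive generator a new generator and invokes Theorem~\ref{thm:sandwich}.

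Keeping $F$ is convenient. The $F$-coefficient is additive, nonnegative on each vertex, and at most $1$ on $\sum_\Gamma v$, so at most one vertex carries $F$. State this explicitly: it is what makes $F\mapsto e_0$ change a single diagonal entry, and it replaces the paper's path argument.

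Three corrections are needed:
\begin{itemize}
\item The substitution $F\mapsto e_0$ must be justified algebraically. ``Cutting a disk out of the page'' changes the contact manifold, and the resolution of $Y$ is not a filling of the result.
\item Your claim that some vertex must have $w=1$ is false: with a multiplicity one component, Corollary~\ref{cor:sympsphere} gives $w=0$ for every class. It is harmless, since if no vertex carries $F$ you already have an $s$-embedding.
\item To return to a \emph{pm} graph you need $E_0$ to be the \emph{last} vertex blown down in the sandwiched presentation. Theorem~\ref{thm:sandwich} as stated does not give this, but its proof does: the vertex carrying the type~1 element $e_0$ can only be blown down once it is alone. Raising its framing by one then turns the final $(-1)$ into a $0$.
\end{itemize}
With this reduction, your first plan of redoing Section~\ref{sec:s-embed} with $F$ as type~1 can be dropped.

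The ``if'' direction takes a different route from the paper, and one step fails as written. The paper uses Lemma~\ref{lem:algplanar}: it pulls back the projection $\C P^1\times D^2\to D^2$ along the blowups of $\tilde\Gamma$ and counts $\chi$. The failing step is your use of \cite[Remark 7.1.14]{nemethi2022normal}, which prescribes only the exceptional part $D_e$. A generic $f$ with $(f\circ\pi)_e=D_e$ has \emph{reduced} strict transform, namely $a_v$ distinct cuts over $E_v$. Its Milnor fibration is then an honest open book, generally of positive genus. For $D_4$ with the cycle of Figure~\ref{fig:cusp-d4} (right), A'Campo's formula gives a genus $1$ page with two boundary components instead of an annulus. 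So Lemma~\ref{lem:ob-from-f} does not give ``binding data as described.''

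What you need is $f$ with $\div f=D_e+\sum a_vA_v$, where each $A_v$ is a \emph{single} smooth cut taken with multiplicity $a_v$. Such an $f$ exists here. Since $p_g=0$, the map $c_1\colon\mathrm{Pic}(\tilde X)\to H^2(\tilde X;\Z)$ is injective, and this divisor has degree $0$ on every $E_v$, so it is principal. Alternatively, Lemma~\ref{lem:algplanar} builds such an $f$ directly, with the $(-1)$-leaves of $\tilde\Gamma$ becoming the non-reduced cuts.

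With this fix, your route works, once you also treat the trivial case where the last vertex of the blowdown lies outside $\Gamma$ (then $\Gamma$ is already sandwiched). It exhibits \emph{pm} links as cappings of sandwiched ones, exactly the models of Section~\ref{sec:cobordisms}. However, it uses rationality of $\Gamma$ and the fact that the binding determines the open book on a rational homology sphere. The paper's construction needs neither.
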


The following lemma proves one direction of the classification and justifies our terminology: 
a $pm$ graph plumbing admits a planar multilink open book on its boundary, described algebraically. 

\begin{lemma}\label{lem:algplanar}
	Any  ${pm}$ graph $\Gamma$ with an augmentation $\Gamma\leq\tilde\Gamma$ admits an analytic realization, namely a function $f:\tilde\Gamma\to\C$ such that $\div(\pi\circ f)_e=\tilde F$ generates the null-space of $Q_{\tilde\Gamma}$. The restriction $F=\tilde F|_\Gamma$ gives a planar multilink for the graph $\Gamma$. In particular, every $pm$ graph admits a planar multilink open book compatible with the 
	canonical contact structure on the corresponding singularity link.
\end{lemma}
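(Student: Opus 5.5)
We will build the function on the augmented graph from a pencil, mirroring the sandwiched case. The plan is to reduce to Lemma~\ref{lem:planarspinalgraph} via Lemma~\ref{lemma:almostsandwich}, and to check directly that the resulting divisor is the null vector of $Q_{\tilde\Gamma}$.

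\textbf{Step 1: geometric model.} Blow the augmented graph $\tilde\Gamma$ down to the single $0$-framed vertex. Realize this $0$-vertex as a fiber $F_0=\{0\}\times \mathbb{P}^1$ of the trivial fibration $\C\times\mathbb{P}^1\to\C$. Then reverse the blowdown sequence by blowing up points on (the total transform of) $F_0$. This produces a plumbing neighborhood $\tilde N$ of a configuration realizing $\tilde\Gamma$, together with the holomorphic projection $p:\tilde N\to\C$. The total transform $\div(p)$ is supported on the compact curves, since the original fiber is compact. Its multiplicity vector $\tilde F$ therefore satisfies $\tilde F\cdot E_v=0$ for all $v\in\tilde\Gamma$, because the divisor of a function is linearly equivalent to zero and nearby fibers are disjoint from each $E_v$. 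Since $\tilde\Gamma$ is negative semidefinite with one-dimensional kernel, $\tilde F$ spans the null-space. We choose $\tilde F$ primitive; this holds because the initial fiber has multiplicity~$1$ and blowups only add coefficients.

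\textbf{Step 2: restriction to $\Gamma$.} Contract $\Gamma$, which is negative definite. Then $p$ restricted to a neighborhood of $\Gamma$ gives a function germ $f$ on a singularity with graph $\Gamma$, in the sense of Lemma~\ref{lem:ob-from-f}. Its exceptional part is $F=\tilde F|_\Gamma$. Its strict transform consists of disks transverse to the $(-1)$-leaves of $\tilde\Gamma\setminus\Gamma$. Each such leaf $l$ attached to $E_v$ contributes an arrowhead on $E_v$ of multiplicity $r_l$. By Lemma~\ref{lem:ob-from-f}, this gives a multilink open book compatible with $\xi_{can}$. One point needs care: these leaves are compact, so we take the germ as $p$ on a neighborhood of $\Gamma$ only. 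A small transverse disk $D$ to each leaf then plays the role of the cut.

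\textbf{Step 3: planarity.} This is the main obstacle. The direct approach is A'Campo's formula~\eqref{eq:chi} applied to $\tilde F$ on $\tilde\Gamma$. A general fiber of $p$ is $\mathbb{P}^1$, so $\chi=2$. Removing the neighborhoods of the leaves $l$ leaves a sphere with holes. Hence the Milnor fiber of $f$ is the general fiber minus the leaf neighborhoods, intersected with a neighborhood of $\Gamma$. This is a planar surface whose holes correspond to the branches through the leaves, with multiplicities $r_l$.

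To make this rigorous, we will instead invoke Lemma~\ref{lemma:almostsandwich}. That lemma writes $\Gamma$ as a sandwiched graph with the framing of $E_0$ raised by one. Lemma~\ref{lem:planarspinalgraph} then shows that capping off the multiplicity-$1$ outer boundary of the planar page from Lemma~\ref{lem:monodromygerm} yields exactly the function above, via the projection $pr$ composed with the final blowup. The page is therefore a disk with holes capped to a sphere with holes, which is planar.

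The point to verify is that the divisor $D_e$ from the sandwiched construction coincides with $\tilde F|_\Gamma$. This follows because both are the multiplicities of the same projection function along the same blowup sequence. Finally, a $pm$ graph is rational by the Laufer argument in Lemma~\ref{lem:planarspinalgraph}, so $F$ lies in the Lipman cone and can be realized analytically on any singularity with this graph.
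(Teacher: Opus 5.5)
Your Steps 1 and 2 follow the paper's construction: blow up the central fiber of the trivial $\mathbb{P}^1$-family along the sequence for $\tilde\Gamma$, note that $E_0$ keeps coefficient $1$ so $\tilde F$ is primitive, then restrict to a neighborhood of $\Gamma$ and read off arrowheads. Your proof of planarity differs from the paper's.

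The paper stays inside the construction. It runs A'Campo's formula~\eqref{eq:chi} inductively, showing the sum stays equal to $2$ for $\tilde F$ on $\tilde\Gamma$: a vertex blowup raises the valency of $v$ and adds a leaf with the same coefficient $m_v$, and these cancel, while an edge blowup adds a valency-$2$ vertex contributing $0$. Deleting the leaves then gives $\chi=2-\sum m_i$. Since each leaf has the same coefficient as its neighbor, each arrowhead of multiplicity $m$ contributes $m$ boundary circles, which forces genus $0$.

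You instead route through Lemma~\ref{lemma:almostsandwich} and Lemma~\ref{lem:planarspinalgraph}. This is non-circular and works, but be precise about what it uses. Lemma~\ref{lem:planarspinalgraph} does not build your function from $pr$. It obtains \emph{some} function with exceptional part $D_e$ from rationality and the Lipman cone, and identifies that function's open book with the capped sandwiched one. To transfer planarity to your $f$ you therefore need two facts. The first is the identity $D_e=\tilde F|_\Gamma$. Both follow the same multiplicity recursion from a coefficient-$1$ curve $E_0$, provided the sandwiched sequence never blows up the point $E_0\cap A_0$; that is what the coordinate choice in Section~\ref{sec:sandw} is for. The second is that the multilink Milnor fibration depends only on $\Gamma$ decorated by the exceptional cycle. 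Your route is shorter on the page but imports the sandwiched open book of Lemma~\ref{lem:monodromygerm} and the surgery-framing computation; the paper's is self-contained and combinatorial.

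The argument you set aside as heuristic is actually rigorous, and it is the cleanest of the three. Every blowup is centered on the central fiber, so for small $t\neq 0$ the fiber $p^{-1}(t)$ is literally $\{t\}\times\mathbb{P}^1$. The Milnor fiber of $f$ is its intersection with the preimage of a Milnor ball, i.e.\ with a neighborhood of $\bigcup_{v\in\Gamma}E_v$. An open subset of a sphere is planar, and near each leaf $l$ the removed part is $r_l$ disks. The paper's A'Campo count is the combinatorial shadow of this picture.

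One correction to Step 2. The strict transform of $f$ is not made of disks transverse to the leaves. It is the germ of each leaf $l$ itself at $l\cap E_v$ (a disk inside $l$, transverse to $E_v$), counted with multiplicity $r_l$. Disks transverse to the $(-1)$-curves are what define the plane curve germ in the sandwiched picture, not the cuts.
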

\begin{proof} This is similar to the constructions in Section~\ref{sec:spinal}:
	by following the blowup sequence, we construct the function $f$ by pullback, and keep track of its divisor $\tilde F$. Consider the trivial fibration $\C P^1\times D^2\xrightarrow f D^2$ with reduced structure. Let us blow up the central fiber $E_0$ according to the blowup sequence of $\tilde\Gamma$. The cycle $1E_0$ is the positive generator of the null-space of the single vertex graph with framing 0. By induction, consider a step in the blow-up sequence $\tilde\Gamma'$ and
	$\tilde F'$ on $\tilde\Gamma'$.
	We have two cases to consider. In the first case,  if the blowup happens at a vertex $v$ of $\tilde\Gamma'$, the new cycle will be $\tilde F'+m_ve_{new}$;  in the second case, if the blowup happens on an edge $(v,u)$ of $\tilde\Gamma'$, then the new cycle will be $\tilde F'+(m_v+m_u)e_{new}$, where in both cases $m_v$ is the coefficient of $v$ in $\tilde F'$, and $e_{new}$ is the new exceptional curve after the blowup.  The resulting cycle is the positive generator of the null-space, since its coefficients are relatively prime.
	
	By assumption, the last blowup step is a vertex blowup in each case, so after taking restriction $F=\tilde F|_\Gamma$, we see that if $F\cdot v=-m<0$, then the multiplicity of $v$ in $F$ is $m$. We constructed an analytic realization of $\tilde\Gamma$ with a given function $f$, where $\div(\pi\circ f)_e=\tilde F$.
	We saw that each component of $V(f)$ has the same multiplicity as the vertex it is supported on, so each component of $f|_{X_\Gamma}$ punctures each page $m$ times.

	We compute the Euler characteristic of the page by using~\eqref{eq:chi} for $\tilde\Gamma$ by induction (note that the answer only depends on the unlabeled graph and the cycle $\tilde F$). For the single vertex graph with $\tilde F=1v$, we get $\chi=2$. After blowing up a vertex $v$, 
	%\comm{which formula?}
	the Euler characteristic \eqref{eq:chi} decreases by $m_v$, since the degree of $v$  goes up by one, and we get a new term in the sum, namely an increase of $m_v$ from the new vertex, so the value is invariant. 
	
	For an edge blowup at $(u,v)$, the degrees of $u$ and $v$ stay unchanged, and we get a new vertex of degree $2$, which does not contribute to the sum, so it stays invariant with a value of $2$.
	
	For $\Gamma$, we remove leaves with multiplicity $m_i$, so the value of $\chi$ drops to $2-\sum m_i$. On the other hand, by the previous argument the fiber is an orientable surface with $\sum m_i$ holes, so it follows  that it is a disk.
\end{proof}

\begin{remark}
	This construction can be seen as a version of the Kulikov construction (see \cite[6.7.A]{nemethi2022normal}) for genus $0$, where we allow higher multiplicity components to be blown up as well. If we only allow blowups on multiplicity 1 components on a (trivial) genus $0$ family, we get the reduced fundamental cycle (i.e. minimal rational) graphs, which coincides with the set of graphs admitting a classical planar open book by \cite{ghiggini2021surface}.
\end{remark}

Lemma~\ref{lem:algplanar} admits an {\em algebraic}  converse:
\begin{prop}\label{prop:pmchar} If $(X, o)$ is a normal surface singularity with a resolution graph $\Gamma$, and $f:X \to\C$ a holomorphic function whose multilink Milnor fibration has genus $0$, then $\Gamma$ is a ${pm}$ graph.
\end{prop}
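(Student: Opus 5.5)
Since the restriction of $f$ to the Milnor fibration only depends on the total transform divisor, we first pass to a good embedded resolution $\pi:\tilde X\to X$ on which $\div(f\circ\pi)=\sum r_vE_v+\sum a_vA_v$ has normal crossings as in~\eqref{eq:totaltransform}. The resulting graph $\Gamma_f$ is a blowup of $\Gamma$ (or of the minimal good resolution), and being a \emph{pm} graph is preserved under blowing down: if $\Gamma_f\leq\tilde\Gamma$ and $\Gamma$ is obtained from $\Gamma_f$ by blowdowns of $(-1)$ vertices inside $\Gamma_f$, then $\tilde\Gamma$ blows down through the same steps to a graph containing $\Gamma$. It therefore suffices to show that $\Gamma_f$ is \emph{pm}. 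The natural candidate for the augmentation is $\tilde\Gamma$, obtained by replacing each arrowhead $A_v$ of multiplicity $a_v$ by a chain of curves. Equivalently, we glue the divisor $\sum r_vE_v$ to ``leaves'' so that the resulting cycle $\tilde F$ satisfies $\tilde F\cdot E=0$ for every vertex. This reverses the construction of Lemma~\ref{lem:algplanar}.

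\textbf{Realizing the completion.} Each arrowhead with multiplicity $a_v$ meeting $E_v$ (coefficient $r_v$) is locally modeled on $x^{a_v}y^{r_v}$. We compactify the fibration by attaching $(-1)$ leaves, as in the framing computation of Lemma~\ref{lem:planarspinalgraph}. Adding a leaf $l$ with coefficient $a_v$ to $E_v$ turns $E_v\cdot\tilde F$ into $0$. At $l$ we need $l\cdot\tilde F=r_v-a_v\cdot(\text{self-intersection})=0$, which generally fails. So instead we resolve the pencil $\{x^{a_v}=t\,y^{r_v}\}$: we perform the continued-fraction chain of blowups at the point $E_v\cap A_v$ and take the fiber over $t=\infty$. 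Globally this compactifies $f$ to a proper map $Z\to D^2$ with central fiber $\tilde F$ supported on $\Gamma_f\cup(\text{chains})$.

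\textbf{Identifying the central fiber.} The generic fiber of $Z\to D^2$ is the Milnor fiber capped by discs, one per boundary component. By hypothesis it has genus $0$, so it is $\C P^1$, and $Z$ is a (possibly singular-fibered) ruled family over a disc. After a suitable resolution of the compactification, $Z$ is smooth, and since the generic fiber is a sphere, the central fiber is a tree of rational curves whose intersection form is negative semidefinite with null-space spanned by $\tilde F$ (Zariski's lemma). By Castelnuovo/ruled surface theory, a degenerate fiber of a $\C P^1$-fibration over a disc contracts to a smooth $0$-fiber by successive blowdowns of $(-1)$ curves. Hence $\tilde\Gamma$ blows down to a single $0$-framed vertex. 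We then apply Lemma~\ref{lem:spivakovsky} to get the \emph{pm} structure on $\Gamma_f$, hence on $\Gamma$.

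\textbf{Main obstacle.} The delicate step is the compactification. We must check that gluing in the chains near each $E_v\cap A_v$ and the caps for the boundary components produces a smooth surface whose fibration has the capped Milnor fiber as generic fiber. The compactification adds one disc per boundary circle, and $\gcd(r_v,a_v)$ circles come from each arrowhead, so the count of added discs must be matched. We also need the new vertices to stay outside $\Gamma$ while forming only a graph that contracts, not one creating cycles. I would first try the local toric model: at each $x^{a}y^{r}$ point, the Hirzebruch–Jung resolution of $(x^a:y^r)$ gives exactly the needed chain. Normality of the total space is automatic after resolving, and negative definiteness of $\Gamma$ ensures that $\Gamma$ contains no fiber component of the whole fiber.
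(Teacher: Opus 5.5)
Your overall plan matches the paper's. You cap off the pages by gluing a Hirzebruch--Jung chain onto each arrowhead; the paper phrases this as page-framed $0$-surgery on the binding, i.e.\ $-r_v/a_v$ surgery on a fiber over $E_v$. You then observe that the resulting family has genus-$0$ general fiber, and conclude that its central fiber $\tilde\Gamma$ contracts to a $0$-vertex. You identify the general fiber topologically and use ruled-surface theory, whereas the paper derives $\chi(\tilde F)=1$ from A'Campo's formula and adjunction and then applies Zariski's lemma; either works. Your preliminary passage to an embedded resolution $\Gamma_f$ is a useful addition, since the paper tacitly assumes the arrows already sit on $\Gamma$. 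It does need the standard fact that a $(-1)$-curve in a degenerate fiber of a $\C P^1$-fibration meets at most two other components. That fact lets the blowdowns from $\Gamma_f$ to $\Gamma$ be carried out inside the family, yielding a plumbing graph that still contracts to a $0$-vertex.

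The gap is in the compactification, the one step that carries the content. Besides being left unchecked, it is set up at the wrong place. Blowing up at $E_v\cap A_v$ to resolve $x^{a_v}=t\,y^{r_v}$ only inserts curves between $E_v$ and the strict transform of $A_v$, which stays non-compact. The members of that pencil are not level sets of $f$: those are $x^{a_v}y^{r_v}=s$, which miss the base point. So the Milnor fibers still run out to the boundary along $A_v$, and nothing gets capped.

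Your pencil becomes the right one only after replacing $y$ by the coordinate $u=1/y$ at the point at infinity of the compactified arrow $\bar A_v$. There $f=x^{a_v}u^{-r_v}$ has an indeterminacy point. Resolve it at that point; torically, in the fan of $\C^2_{x,y}$, subdivide the cone between $(1,0)$ (the ray of $A_v=\{x=0\}$) and $(r_v,-a_v)/d$, where $d=\gcd(a_v,r_v)$. Then keep the part over $t=f$ near $0$ and discard the polar part over $t=\infty$. This is also the chain you were after, not the one obtained by resolving $(x^{a_v}:y^{r_v})$ at $E_v\cap A_v$.

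With this construction:
\begin{itemize}
\item The chain attached to $E_v$ is $\bar A_v$ (coefficient $a_v$), followed by the new curves on which $f$ has positive order.
\item The new curve on which $f$ has order $0$ is a $d$-sheeted multisection, supplying exactly the $d$ capping discs.
\item $\tilde F\cdot C=0$ holds automatically for each new curve $C$, because $f$ is now proper near the central fiber. This replaces your leaf computation, whose sign should read $l\cdot\tilde F=r_v+a_v\,l^2$.
\end{itemize}
This is the geometric content of the paper's surgery description, and with it the rest of your argument goes through.
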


\begin{proof} Suppose that  $\tilde\Gamma$ is  a negative  semidefinite tree whose vertices are spheres. Recall that the Riemann--Roch function $\chi(D):=-\frac12(D,D+K)$ computes the holomorphic Euler characteristic, where $K=c_1(T X_{\tilde\Gamma})$ is the canonical class. We translate the requirement on the topological Euler characteristic into a holomorphic one. Let $\tilde F$ generate the null-space of $\tilde\Gamma$. 
 Expanding the identity  $0=\tilde F\cdot E$ where $\tilde F=\sum m_iv_i,\ E=\sum v_i$ for $v_i\in\tilde\Gamma$,  we get $0=\sum m_i(v_i^2+d_i)$, i.e. 
 $$
 \sum m_i(-d_i)=\sum m_iv_i^2.
 $$
 In the expression $\chi(\tilde F)=-\frac{1}{2}(\tilde F^2+K\cdot \tilde F)$, the first term is zero, since $\tilde F$ is in the kernel of the intersection map.  The second term expands, using adjunction, to $-\frac12\sum m_i(-v_i^2-2)$.
 Starting from the multilink open book planar equation $2=\sum m_i(2-d_i)$, substitute  to get $2=\sum m_i(2+v_i^2)$, and finally $\chi(\tilde F)=1$.
Now,  Zariski's lemma tells us that such a space $\tilde\Gamma$ is the central fiber of a family of genus 0 curves, and any such family is trivial (see \cite[6.7.28.]{nemethi2022normal}), that is, $\tilde\Gamma$ is a blowup on the central fiber of $S^2\times D^2$.
 
 Given a negative definite resolution graph $\Gamma$ of a singularity $(X, o)$ and a holomorphic function  $f:X_{\Gamma}\to \C$ with genus $0$ multilink Milnor fibration, 
 we would like to construct an augmented graph $\tilde{\Gamma}$ as above. 
 We can cap off  the boundary  components of the fiber of the Milnor fibration of $f$ 
via 0-surgery on its multilink components. A special case of this procedure was described in the proof of Lemma~\ref{lem:planarspinalgraph}. %Section~\ref{sec:kulikov}.
 The multilink components correspond to fibers over some spheres of its plumbing presentation (represented by the arrow components). 
The $0$-framed surgery with respect to the page converts to $-\frac{b}{a}$ surgery with respect to the plumbing framing, where $b$ is the multiplicity of $f$ on the component supporting the arrow, and $a$ is the multiplicity of the arrow. In general, 
this surgery is equivalent to  attaching linear graphs to $\Gamma$ at the arrows of $f$, corresponding to the Hirzebruch-Jung continued fraction coefficients of $-\frac ba$. Via this construction, every graph embeds into a negative semidefinite graph corresponding to a (possibly singular) family of genus $g$ curves, where $g$ is the genus of the Milnor fibration of $f$. If $g=0$, the family is trivial, as discussed previously. The divisor of $f$ extends to $\tilde F$, with $\chi(\tilde F)=1$ computing the genus of the family. 
\end{proof}

 The remaining part of Theorem~\ref{thm:pm-graph} is a {\em topological} converse of Lemma~\ref{lem:algplanar}. We will prove it as a corollary of Proposition~\ref{prop:p-embed} below and the results of Section~\ref{sec:s-embed}. Note that this argument is independent of  Proposition~\ref{prop:pmchar}.

\begin{prop} \label{prop:p-embed}
	$\Gamma$ is a ${pm}$ graph if and only if $Q_\Gamma$ embeds into a diagonal negative definite lattice, so that under the embedding the image of any connected subgraph is of the form $e_i-\sum e_j$ or $-\sum e_j$.
\end{prop}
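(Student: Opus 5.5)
Both directions can be modelled on Section~\ref{sec:s-embed}. We call an embedding of the stated form a $p$-embedding.

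\emph{Forward direction.} Let $\Gamma$ be a $pm$ graph. By Lemma~\ref{lem:spivakovsky} it has an augmentation $\tilde\Gamma$ by $(-1)$ leaves that blows down to a single $0$-framed vertex. I would build the embedding by induction along the blowup sequence, as in Remark~\ref{rmk:sandwich-embed}. The starting $0$-vertex has no embedding into a definite lattice, so I would instead track the formal expression $w\beta+\sum a_i\alpha_i$ from Section~\ref{sec:spinal}, where $\beta$ is the class of the fiber $S^2$.

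Concretely, work in $H_2(S^2\times D^2\#n\cptwobar)$ with basis $[S^2],e_1,\dots,e_n$. A vertex blowup subtracts $e_*$ from the image of $v$ and sends the new vertex to $e_*$. An edge blowup subtracts $e_*$ from the images of both endpoints. The $0$-vertex is sent to $[S^2]$.

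Each vertex $v$ of $\Gamma$ then maps to $w_v[S^2]+\sum a_ie_i$ with $w_v\in\{0,1\}$. Exactly one vertex has $w_v=1$, namely the proper transform of the original fiber, and its image is $[S^2]-\sum e_j$. Every other vertex has $w_v=0$. Since $[S^2]^2=0$ and $[S^2]$ is orthogonal to the $e_i$, the map $\sigma\mapsto\sigma-w(\sigma)[S^2]$ is an isometry on the span of $\Gamma$.

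Under this map the fiber vertex goes to $-\sum e_j$. The other vertices go to $e$-type expressions, which lie in the pattern $e_i-\sum e_j$ exactly as in the sandwiched case. Subgraph sums are preserved by the same argument as in Remark~\ref{rmk:sandwich-embed}, because a blowup only subtracts a basis vector that appears with coefficient $+1$ in a single new vertex. Injectivity follows from negative definiteness of $Q_\Gamma$.

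A subgraph containing the fiber vertex has image of the form $-\sum e_j$ plus possibly one $e_i$. For this I need the positive coefficient to survive. I would check this using the invariant that every basis vector has total coefficient at most $1$ in the sum over all of $\tilde\Gamma$, which is the type analysis of Section~\ref{sec:s-embed}.

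\emph{Converse.} Suppose $Q_\Gamma$ embeds as stated. I would reduce to Theorem~\ref{thm:sandwich} by adding one extra basis vector. Sum all vertices of $\Gamma$. There are two cases.

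If the total has the form $e_\gamma-\sum e_j$, then every connected subgraph is already of $s$-type. This is forced because a subgraph whose image has no positive term would make the type counting fail. So $\Gamma$ is sandwiched. A sandwiched graph is $pm$: in the augmentation $\tilde\Gamma$ blowing down to a single $(-1)$ vertex, blow up that vertex once more and attach the result as a new leaf on $\tilde\Gamma$ instead.

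If the total is $-\sum_{I}e_j$, let $u$ be a vertex whose image has no positive term. Add a new basis vector $e_0$ and set $\phi'(u)=\phi(u)+e_0$. This raises the framing of $u$ by one and changes the lattice. The resulting graph $\Gamma^+$ is semidefinite at worst, and every connected subgraph of it is $s$-type. I would then rerun the augmentation and blowdown argument of Propositions~\ref{prop:blowdown} and~\ref{prop:minplanar}, dropping the definiteness claim, to show that $\Gamma^+$ together with added $(-1)$ leaves blows down to a single $(-1)$ vertex. Lowering the framing of $u$ back by one then realizes $\Gamma$ inside a graph that blows down to a $0$-vertex, in the manner of Lemma~\ref{lemma:almostsandwich}.

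\emph{Main obstacle.} The hard step is the converse in the second case. I must show that exactly one vertex has no positive term, so that $u$ is well defined. I also need the negative-definiteness induction in the leaf-augmentation proposition to survive when the ambient form may become degenerate. My first attempt would be to repeat the type 1--4 counting with a type "$-$ only" class for $e$'s attached to $u$, and to show directly that the only null vector is the image of the $0$-vertex.
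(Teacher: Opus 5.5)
Your forward direction is correct and is the paper's argument in different clothing. The paper lowers the framing of the last surviving vertex to get a sandwiched graph, takes an $s$-embedding from Theorem~\ref{thm:sandwich} (in effect the one built in Remark~\ref{rmk:sandwich-embed}), and deletes the type~1 element. You instead keep the $0$-vertex and project out the isotropic class $[S^2]$. The worry in your last paragraph there is moot: the blowup invariant you cite already shows that every connected subgraph through the fiber vertex sums to exactly $[S^2]-\sum e_j$.

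The converse has a genuine gap, at the point you flag. Both cases need to know \emph{which} connected subgraphs have no positive term. Case~1 needs that there are none. Case~2 needs that $u$ exists, that it is unique, and that the subgraphs without a positive term are exactly those containing $u$. Otherwise $\phi'$ is not an $s$-embedding, and ``every connected subgraph of it is $s$-type'' is unsupported.

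The paper supplies this with a one-step extension argument. Let $\Gamma'$ be connected with $\sum_{\Gamma'}\phi=-\sum_A e_i$, and let $v\notin\Gamma'$ be adjacent to $\Gamma'$. Since $\Gamma$ is a tree, $(\sum_{\Gamma'}\phi,\phi(v))=1$. But this pairing equals $-|A\cap B|\le 0$ if $\phi(v)=-\sum_B e_j$, and equals $\epsilon-|A\cap B|$ if $\phi(v)=e_v-\sum_B e_j$ (where $\epsilon=1$ if $v\in A$ and $\epsilon=0$ otherwise). So $v\in A$, $A\cap B=\emptyset$, and $\Gamma'\cup\{v\}$ again sums to $-\sum e_j$. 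Consequences:
\begin{itemize}
\item Growing outward from $u$ shows every connected subgraph containing $u$ lacks a positive term.
\item Growing along a path toward $u$ rules out a second such vertex, and any subgraph avoiding $u$ with sum $-\sum e_j$, since the pairing at $u$ would be $\le 0$.
\item In Case~1, such a subgraph would propagate to all of $\Gamma$, contradicting the form of the total.
\end{itemize}

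Your counting idea can also be completed, but only by running it on every connected subgraph $\Gamma'$. Distinct vertices have distinct positive terms, so the only types are $(+)$, $(-)$, $(+,-)$, $(+,-,-)$. The edge count $\sum_{v<w\in\Gamma'}(v,w)=|\Gamma'|-1$ gives exactly $|\Gamma'|-1$ elements of the last two types. Hence the number of vertices of $\Gamma'$ carrying a positive term is $|\Gamma'|-1$, plus one if $\sum_{\Gamma'}\phi$ has a positive term. So $\sum_{\Gamma'}\phi$ lacks a positive term iff some vertex of $\Gamma'$ does, and exactly one vertex of $\Gamma$ does when the total is $-\sum e_j$. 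This also gives the existence of $u$, which the paper leaves implicit.

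Separately, the sign in Case~2 is reversed. Since $e_0^2=-1$, setting $\phi'(u)=\phi(u)+e_0$ \emph{lowers} the framing of $u$ by one. So $\Gamma^+$ is still negative definite, $\phi'$ is an honest $s$-embedding, and Theorem~\ref{thm:sandwich} applies as stated. The degeneracy problem in your ``main obstacle'' never arises, and nothing needs to be rerun with definiteness dropped.

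Correspondingly, recovering $\Gamma$ means \emph{raising} the framing of $u$ by one. This turns the final $(-1)$ vertex into a $0$ vertex only if $u$ is the last vertex blown down. That is true but must be said. The element $e_0$ occurs only in $\phi'(u)$, so it is the type~1 element and stays in the image of $u$ through the leaf augmentation and all blowdowns. By the case analysis in Proposition~\ref{prop:blowdown}, a vertex whose image is the type~1 element is blown down only when it is the sole remaining vertex. With these repairs, your converse is essentially the paper's proof.
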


\begin{proof}
	By analogy with Section~\ref{sec:s-embed}, we will call such embeddings \textit{p-embeddings}.
	If $\Gamma$ is a {\em pm} graph, take its planar multilink presentation and decrease the framing of the last vertex of the blowdown by 1 to get a sandwiched graph, as in Lemma~\ref{lemma:almostsandwich}. Then apply Theorem~\ref{thm:sandwich} to get an $s$-embedding of the new graph, and remove the type 1 basis element to go back to $\Gamma$.

	Conversely, given a $p$-embedding $\phi:Q_\Gamma\to\langle-1\rangle^N$, we wish to run the arguments of Section~\ref{sec:s-embed} again. To this end, we claim that there can be at most one vertex $w$ whose image is $\phi(w)=-\sum e_i$, and any connected subgraph $\Gamma'$ such that $w\in\Gamma'\leq\Gamma$ also 
	satisfies $\sum_{\Gamma'}\phi(v)=-\sum e_i$. 
	To see this, consider  $\Gamma'\leq\Gamma$ such that $\sum_{\Gamma'}\phi(v)=-\sum e_i$, and a vertex $v$ adjacent to this subgraph. From $(\Gamma',v)=1$ we get that $\phi(v)=e_v-\sum e_i$, such that $e_v$ appears in $\sum_{\Gamma'}\phi(v)=-\sum e_i$. Thus the larger connected subgraph $\Gamma'\cup\{v\}$ also sums to the form $-\sum e_i$, since $e_v$ cancels out.
	Thus if there is a vertex $v$ without a positive basis element in its image, it is unique: otherwise consider the path between two such to get a contradiction.  The whole $\Gamma$ also has this property. Adding a new positive basis element to the image of  $v$, we get a new graph that satisfies the requirements of Definition~\ref{def:sandwichchar}.
\end{proof}

\begin{proof}[Proof of Theorem~\ref{thm:pm-graph}] We need to prove that if the contact link of the singularity $(X, o)$ admits a planar multilink open book, then the dual resolution graph $\Gamma$ of $(X, o)$ is a {\em pm} graph. The strong symplectic filling given by the minimal resolution of $(X, o)$ has the structure of a nearly Lefschetz fibration compatible with the given planar multilink open book, that is, the filling is the complement of a multisection in a blowup of $S^2 \times D^2$.   By Corollary~\ref{cor:sympsphere}, the intersection lattice $Q_{\Gamma}$ of this filling admits a $p$-embedding. Then by Proposition~\ref{prop:p-embed}, $\Gamma$ is a {\em pm} graph.
\end{proof}

From Theorem~\ref{thm:pm-graph},  we get another proof of the rationality of $pm$ singularities via the Laufer sequence computation of Lemma~\ref{lem:planarspinalgraph}. In fact the model singularities in the proof of Theorem~\ref{thm:contactvanishingstatement}  encompass the entirety of planar multilink graphs!
\begin{cor}\label{prop:rational}
	Any ${pm}$ graph $\Gamma$ is rational.
\end{cor}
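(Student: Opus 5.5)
By Lemma~\ref{lemma:almostsandwich}, every ${pm}$ graph $\Gamma$ is obtained from a sandwiched graph $\Gamma_s$ by raising the framing of one vertex $E_0$ by $1$. Here $E_0$ is the last vertex of $\Gamma_s$ to survive in the blowdown of its augmentation $\tilde\Gamma_s$ (augmented by $(-1)$ leaves as in Remark~\ref{rmk:add-leaves}). Sandwiched graphs are rational. So the plan is to rerun the Laufer-sequence argument from the proof of Lemma~\ref{lem:planarspinalgraph}, checking that it applies to an arbitrary augmented sandwiched presentation and not only to the model singularities.

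First I need a cycle $D$ in the Lipman cone of both $\Gamma_s$ and $\Gamma$ whose $E_0$-coefficient is $1$. I would take the function $g$ from Section~\ref{sec:sandw}, i.e. the pullback of the projection $pr$ along the blowup sequence that produces $\tilde\Gamma_s$. By \eqref{eq:divisor}, its exceptional part $D_e$ has coefficient $r_0=1$ on $E_0$, and $g$ has an arrowhead of multiplicity $1$ at $E_0$. Hence $E_0\cdot_{\Gamma_s} D_e=-1$, and therefore $E_0\cdot_{\Gamma} D_e=0$. For all other vertices the products are unchanged and nonpositive, so $D_e\in\mathcal S(\Gamma)\cap\mathcal S(\Gamma_s)$.

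Alternatively, working directly from Lemma~\ref{lem:algplanar}: the null-space generator $\tilde F$ of the augmentation $\tilde\Gamma$ of $\Gamma$ starts as $1E_0$ on the $0$-vertex. Tracking coefficients through the blowups, the coefficient of $E_0$ stays $1$, and the restriction $F=\tilde F|_\Gamma$ satisfies $F\cdot v\le 0$ for every vertex, since adjacent removed leaves contribute nonnegatively. This gives the needed element of $\mathcal S(\Gamma)$ directly.

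Since $Z_{min}$ is the coordinatewise minimum of the Lipman cone and all its coefficients are positive, both $Z_{min}(\Gamma)$ and $Z_{min}(\Gamma_s)$ have coefficient exactly $1$ at $E_0$. I then start Laufer's computation sequence at $Z_0=E_0$ for both graphs. Because the $E_0$-coefficient never exceeds $1$, the vertex $E_0$ is never added again. At each later step, the pairings $Z_k\cdot E_v$ with $v\neq E_0$ involve only the self-intersections of vertices other than $E_0$, together with adjacencies. These data are identical in $\Gamma$ and $\Gamma_s$.

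There is one subtlety: the termination test $Z_k\cdot E_0\le 0$ does depend on the framing of $E_0$. For the sequence to be well defined, the same $Z_k$ must be final for both graphs. At the end I would argue as follows. In $\Gamma_s$ the terminal cycle $Z$ satisfies $Z\cdot E_0\le 0$. Also $Z\le D_e$, with equality of the $E_0$-coefficients, so $Z\cdot E_0 \le D_e\cdot E_0$ plus the contributions of the neighbors of $E_0$. The cleanest route is to run the sequence on $\Gamma$ first and use that $Z_{min}(\Gamma)$ has $E_0$-coefficient $1$. Then the sequence for $\Gamma$ never adds $E_0$, its steps are also valid steps for $\Gamma_s$ (those pairings agree), and its final cycle lies in $\mathcal S(\Gamma)\subset\mathcal S(\Gamma_s)$. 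The inclusion holds because $E_0\cdot_{\Gamma_s}Z=E_0\cdot_\Gamma Z-1$. So this is also the terminal cycle for $\Gamma_s$.

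Rationality of $\Gamma_s$ means every Laufer step for $\Gamma_s$ has $Z_k\cdot E_v=1$. These are the same steps with the same values, so Laufer's criterion gives rationality of $\Gamma$. The main obstacle is this bookkeeping: making sure that Laufer's criterion is independent of the chosen computation sequence and starting vertex, so that starting at $E_0$ is legitimate. I would cite \cite{Laufer} for this independence.
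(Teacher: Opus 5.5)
Your proposal is correct and is essentially the paper's argument: Lemma~\ref{lemma:almostsandwich} identifies $\Gamma$ with the graph $\Gamma'$ of Lemma~\ref{lem:planarspinalgraph}, and a Laufer computation sequence started at $E_0$ never adds $E_0$ again, so it transfers rationality from the sandwiched graph $\Gamma_s$ to $\Gamma$. Your termination argument (run the sequence on $\Gamma$ and use $\mathcal S(\Gamma)\subset\mathcal S(\Gamma_s)$) makes explicit a point the paper glosses over; one small correction is that $E_0\cdot_{\Gamma_s}D_e\le -1$, not $=-1$, when $(-1)$-leaves are attached at $E_0$, and the inequality is all your argument needs.
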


\begin{remark}
	Planar multilink graphs may be interpreted as "almost sandwiched" graphs analogously to almost rational graphs (\cite[Definition 7.3.21]{nemethi2022normal}), but the property is stronger here since we know that one vertex needs to be decreased by just $1$, not by an arbitrary amount. For example decreasing the short arm of $E_6$ to $-4$ we get a sandwiched graph, but $E_6$ is not planar multilink, since  increasing any vertex to $-1$ makes the intersection form indefinite. Alternatively by \cite{ko1939decomposition} we know that $E_6$ cannot embed into a diagonal lattice, but all planar multilink intersection forms do by Proposition~\ref{prop:p-embed}.
\end{remark}

%
%*************** DO NOT NEED THIS ************* (see Lemma 6.3) **************
%
%Note, that the function constructed in Lemma~\ref{lem:algplanar} always has a vertex $v$ where its multiplicity is $1$. If we modify $\Gamma$ by decreasing the self-intersection of $v$ by one, i.e. blow up $v$ at a smooth point in $\tilde\Gamma$ we get a planar multilink with a multiplicity 1 boundary component. By Theorem~\ref{thm:sandwich} this means, that the modified $\Gamma'$ is a sandwiched graph. Thus
%\begin{theorem}[cf. Lemma~\ref{lem:planarspinalgraph}]\label{cor:almostsandwich}
%	Every planar multilink graph is obtained by increasing the framing of the last vertex to be blown down in a sandwiched graph by 1.
%\end{theorem}
%
%

\bibliographystyle{alpha}
\bibliography{references}

\end{document}